\theoremstyle{cupplain}
\newtheorem{theorem}{Theorem}[section]
\newtheorem{lemma}[theorem]{Lemma}
\newtheorem{corollary}[theorem]{Corollary}
\newtheorem{question}[theorem]{Question}
\newtheorem{prop}[theorem]{Proposition}
\newtheorem{conjecture}[theorem]{Conjecture}
\theoremstyle{cupdefinition}
\newtheorem{definition}{Definition}[section]
\theoremstyle{cupremark}
\newtheorem{remark}[theorem]{Remark}
\theoremstyle{cupproof}
\newtheorem{proof}{Proof}
\numberwithin{equation}{section}
\newcommand\numberthis{\addtocounter{equation}{1}\tag{\theequation}}
\begin{document}

\begin{Frontmatter}

\title{A generalization of van der Corput's difference theorem with applications to recurrence and multiple ergodic averages}

\author{\gname{Sohail} \sname{Farhangi}}

\address{\orgdiv{Department of mathematics and computer science}, \orgname{University of Adam Mickiewicz}, \orgaddress{\city{Pozna\'n}, \postcode{61-712}, \state{Greater Poland Voivodeship},  \country{Poland}}\\ (\email{sohail.farhangi@gmail.com})}

\maketitle

\authormark{S. Farhangi}
\titlemark{A generalization of van der Corput's difference theorem and applications}

\begin{abstract}
We prove a generalization of van der Corput's difference theorem for sequences of vectors in a Hilbert space. This generalization is obtained by establishing a connection between sequences of vectors in the first Hilbert space with a vector in a new Hilbert space whose spectral type with respect to a certain unitary operator is absolutely continuous with respect to the Lebesgue measure. We use this generalization to obtain applications regarding recurrence and multiple ergodic averages when we have measure preserving automorphisms $T$ and $S$ that do not necessarily commute, but $T$ has a maximal spectral type that is mutually singular with the Lebesgue measure.
\end{abstract}

\keywords{van der Corput, recurrence, noncommutative ergodic theory}

\keywords[2020 Mathematics Subject Classification]{\codes[Primary]{47A35, 37A30
}\codes[Secondary]{37B20}}

\end{Frontmatter}

\section{Introduction and statements of results}
\subsection{Introduction} In \cite{OriginalvanderCorput} van der Corput proved Theorem \ref{UniformDistributionvanderCorput'sDifferenceTheorem}, which is now known as van der Corput's Difference Theorem (henceforth abbreviated as vdCDT).

\begin{theorem}[{(\cite[Theorem 1.3.1]{Kuipers&Niederreiter})}]
\label{UniformDistributionvanderCorput'sDifferenceTheorem}
If $(x_n)_{n = 1}^{\infty} \subseteq [0,1]$ is a sequence for which $(x_{n+h}-x_n\pmod{1})_{n = 1}^{\infty}$ is uniformly distributed for all $h \in \mathbb{N}$, then $(x_n)_{n = 1}^{\infty}$ is uniformly distributed.
\end{theorem}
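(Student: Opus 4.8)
The plan is to combine Weyl's criterion with the classical van der Corput inequality for bounded sequences in a Hilbert space. Recall that Weyl's criterion says that $(x_n)_{n=1}^{\infty}\subseteq[0,1]$ is uniformly distributed mod $1$ if and only if for every $k\in\mathbb{Z}\setminus\{0\}$ one has $\frac{1}{N}\sum_{n=1}^{N}e^{2\pi ikx_n}\to 0$ as $N\to\infty$. So I would fix $k\in\mathbb{Z}\setminus\{0\}$ and set $u_n:=e^{2\pi ikx_n}$, a sequence of unit vectors in the one-dimensional Hilbert space $\mathbb{C}$; the goal becomes showing $\frac{1}{N}\sum_{n=1}^{N}u_n\to 0$.

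The elementary input is that for each fixed $h\in\mathbb{N}$,
\[
\frac{1}{N}\sum_{n=1}^{N}u_{n+h}\overline{u_n}=\frac{1}{N}\sum_{n=1}^{N}e^{2\pi ik(x_{n+h}-x_n)}\xrightarrow[N\to\infty]{}0,
\]
since $(x_{n+h}-x_n\pmod 1)_{n=1}^{\infty}$ is uniformly distributed by hypothesis, so its exponential sums at the nonzero frequency $k$ vanish in the limit by the ``only if'' direction of Weyl's criterion. Next I would invoke the van der Corput inequality: for any bounded sequence $(u_n)$ in a Hilbert space and any $H\in\mathbb{N}$,
\[
\limsup_{N\to\infty}\left\|\frac{1}{N}\sum_{n=1}^{N}u_n\right\|^2\le\frac{1}{H+1}\left(\sup_n\|u_n\|^2+2\sum_{h=1}^{H}\limsup_{N\to\infty}\left|\frac{1}{N}\sum_{n=1}^{N}\langle u_{n+h},u_n\rangle\right|\right).
\]
This is proved by the standard double-averaging trick: replace $\sum_{n=1}^{N}u_n$ by $\frac{1}{H+1}\sum_{n=1}^{N}\sum_{j=0}^{H}u_{n+j}$ at the cost of a boundary error of size $O(H)$, apply the Cauchy--Schwarz inequality, expand the square, and separate the diagonal from the off-diagonal correlations. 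Plugging the vanishing of every correlation term from the previous step into the right-hand side leaves only $\frac{1}{H+1}\sup_n\|u_n\|^2=\frac{1}{H+1}$, and letting $H\to\infty$ forces $\frac{1}{N}\sum_{n=1}^{N}u_n\to 0$. Since $k\in\mathbb{Z}\setminus\{0\}$ was arbitrary, Weyl's criterion gives that $(x_n)_{n=1}^{\infty}$ is uniformly distributed.

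The only genuine content is the van der Corput inequality itself; once that Hilbert-space lemma is available the deduction is immediate. I therefore expect the main (though still routine) obstacle to be the bookkeeping in the double-averaging estimate — in particular controlling the boundary terms uniformly in both $H$ and $N$ and passing cleanly to the $\limsup$ — rather than anything in the reduction to exponential sums.
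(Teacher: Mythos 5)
Your proposal is correct: the reduction via Weyl's criterion to the exponential sums $u_n=e^{2\pi ikx_n}$, the observation that the hypothesis kills every correlation $\frac{1}{N}\sum_{n=1}^N u_{n+h}\overline{u_n}$, and the finite-$H$ van der Corput inequality obtained by the double-averaging and Cauchy--Schwarz argument together give exactly the classical proof of this theorem. The paper itself does not reprove this statement (it cites Kuipers--Niederreiter, and its new contribution is a different, spectral-measure-based strengthening of the Hilbertian analogue), so your argument coincides with the standard cited proof rather than diverging from anything in the paper.
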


In Ergodic Theory, the following Hilbertian analogues of Theorem \ref{UniformDistributionvanderCorput'sDifferenceTheorem} were introduced by Bergelson in \cite{WMPET} and are of great use.

\begin{theorem}
\label{ClassicalvanderCorput'sDifferenceTheorems}
Let $\mathcal{H}$ be a Hilbert space and $(x_n)_{n = 1}^{\infty} \subseteq \mathcal{H}$ a bounded sequence of vectors. 

\begin{enumerate}
    \item[(i)] If for every $h \in \mathbb{N}$ we have

    \begin{equation}
        \lim_{N\rightarrow\infty}\frac{1}{N}\sum_{n = 1}^N\langle x_{n+h}, x_n\rangle = 0\text{, then }\lim_{N\rightarrow\infty}\left|\left|\frac{1}{N}\sum_{n = 1}^Nx_n\right|\right| = 0.
    \end{equation}
    
    \item[(ii)] If

    \begin{equation}
        \lim_{h\rightarrow\infty}\limsup_{N\rightarrow\infty}\left|\frac{1}{N}\sum_{n = 1}^N\langle x_{n+h}, x_n\rangle\right| = 0\text{, then }\lim_{N\rightarrow\infty}\left|\left|\frac{1}{N}\sum_{n = 1}^Nx_n\right|\right| = 0.
    \end{equation}
    
    \item[(iii)] If
    
    \begin{equation}
        \lim_{H\rightarrow\infty}\frac{1}{H}\sum_{h = 1}^H\limsup_{N\rightarrow\infty}\left|\frac{1}{N}\sum_{n = 1}^N\langle x_{n+h}, x_n\rangle\right| = 0\text{, then }\lim_{N\rightarrow\infty}\left|\left|\frac{1}{N}\sum_{n = 1}^Nx_n\right|\right| = 0.
    \end{equation} 
\end{enumerate}
\end{theorem}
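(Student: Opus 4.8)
The plan is to prove part (iii) and deduce (i) and (ii) from it, since the three hypotheses are nested: writing $\gamma_h := \limsup_{N\to\infty}\bigl|\tfrac{1}{N}\sum_{n=1}^N\langle x_{n+h},x_n\rangle\bigr|$, the hypothesis of (i) forces $\gamma_h=0$ for every $h$, whence $\lim_{h\to\infty}\gamma_h=0$, i.e.\ the hypothesis of (ii); and since the sequence $(\gamma_h)$ is bounded by $\sup_n\|x_n\|^2$, convergence to $0$ makes the Ces\`aro averages $\tfrac1H\sum_{h=1}^H\gamma_h$ tend to $0$, i.e.\ the hypothesis of (iii). So from now on I assume only that $\tfrac1H\sum_{h=1}^H\gamma_h\to 0$ as $H\to\infty$, and (rescaling, and discarding the trivial zero sequence) that $\|x_n\|\le 1$ for all $n$. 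This is essentially Bergelson's original argument from \cite{WMPET}.

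The core is one inequality, valid for each fixed $H\in\mathbb{N}$. Replacing $x_n$ by the local average $\tfrac{1}{H}\sum_{h=1}^H x_{n+h}$ changes the Ces\`aro mean by at most $\tfrac{2H}{N}$ in norm, since the discrepancy is built from only $O(H)$ boundary terms each of norm $\le 1$; hence, using convexity of $\|\cdot\|^2$ (Cauchy--Schwarz) in the last step,
\begin{equation*}
\limsup_{N\to\infty}\left\|\tfrac{1}{N}\sum_{n=1}^N x_n\right\|^2 = \limsup_{N\to\infty}\left\|\tfrac{1}{N}\sum_{n=1}^N\tfrac{1}{H}\sum_{h=1}^H x_{n+h}\right\|^2 \le \limsup_{N\to\infty}\tfrac{1}{N}\sum_{n=1}^N\left\|\tfrac{1}{H}\sum_{h=1}^H x_{n+h}\right\|^2 .
\end{equation*}
Expanding the inner square and interchanging the finite sums with the Ces\`aro sum gives
\begin{equation*}
\tfrac{1}{N}\sum_{n=1}^N\left\|\tfrac{1}{H}\sum_{h=1}^H x_{n+h}\right\|^2 = \tfrac{1}{H^2}\sum_{h=1}^H\sum_{h'=1}^H\tfrac{1}{N}\sum_{n=1}^N\langle x_{n+h},x_{n+h'}\rangle .
\end{equation*}

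Now I separate diagonal from off-diagonal. The $H$ diagonal terms contribute at most $\tfrac{1}{H^2}\cdot H\cdot 1=\tfrac1H$. For $h>h'$, the substitution $m=n+h'$ shows that $\tfrac{1}{N}\sum_{n=1}^N\langle x_{n+h},x_{n+h'}\rangle$ differs from $\tfrac{1}{N}\sum_{m=1}^N\langle x_{m+(h-h')},x_m\rangle$ by a boundary term of norm $O(H/N)$; pairing $(h,h')$ with $(h',h)$, passing to $\limsup_N$ (using that the $\limsup$ of a finite sum is at most the sum of the $\limsup$s, the number of summands being fixed once $H$ is fixed), and collecting by the value $d=h-h'$ yields
\begin{equation*}
\limsup_{N\to\infty}\left\|\tfrac{1}{N}\sum_{n=1}^N x_n\right\|^2 \le \tfrac1H + \tfrac{2}{H^2}\sum_{d=1}^{H-1}(H-d)\,\gamma_d \le \tfrac1H + 2\left(\tfrac1H\sum_{d=1}^{H}\gamma_d\right).
\end{equation*}
The left-hand side is independent of $H$, so letting $H\to\infty$ and applying the reduced hypothesis makes the right-hand side vanish, giving $\lim_{N\to\infty}\bigl\|\tfrac{1}{N}\sum_{n=1}^N x_n\bigr\|=0$, as required. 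The calculation is routine throughout; the only points needing care are the reduction identifying (iii) as the strongest statement and the bookkeeping of the $O(H/N)$ boundary errors, arranged so that after fixing $H$ and sending $N\to\infty$ one is left with precisely the average $\tfrac1H\sum_{h=1}^H\gamma_h$ that the hypothesis controls. I anticipate no genuine obstacle here — in the paper's later, more general framework it is exactly the convexity step that will have to be replaced by the spectral construction announced in the abstract.
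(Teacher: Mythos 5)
Your argument is correct: the reduction to (iii) is sound (the hypotheses are nested once one notes $\gamma_h\le\sup_n\|x_n\|^2$), and the fixed-$H$ estimate with the $O(H/N)$ boundary bookkeeping, diagonal/off-diagonal split, and the final bound $\limsup_N\bigl\|\tfrac1N\sum_{n=1}^Nx_n\bigr\|^2\le\tfrac1H+2\cdot\tfrac1H\sum_{d=1}^H\gamma_d$ is exactly Bergelson's classical proof. Note, however, that the paper does not prove Theorem \ref{ClassicalvanderCorput'sDifferenceTheorems} at all: it is quoted from \cite{WMPET} as background, and the paper's own contribution in Section \ref{HilbertianvdCSection} is a different, spectral derivation of part (i) only. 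There, the hypothesis of (i) forces the correlation limits along a permissible triple to be square-summable (indeed zero), so by Theorem \ref{StrongMixingClassicalvdC} the sequence is spectrally Lebesgue, and pairing it against the constant sequence $c_n\equiv 1$ (spectrally singular, with spectral measure $\delta_0$) via Lemma \ref{StrongMixingTimesRigidNormAveragesTo0} and Corollary \ref{NearlyStronglyMixingAveragesTo0} yields the vanishing of the Ces\`aro averages. Your elementary averaging argument buys a short, self-contained proof of all three parts (ii) and (iii) included, which the spectral framework of this paper does not address (those are treated in the cited thesis); the paper's route buys the stronger conclusion that under the hypothesis of (i) one may weight the averages by any bounded spectrally singular sequence, which is the generalization the rest of the paper exploits. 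Your closing remark is accurate: it is precisely the convexity/averaging step that the paper replaces by the construction of $\mathscr{H}$ and the mutual singularity of spectral measures.
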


See \cite{ERTAnUpdate}, \cite{PolynomialSzemeredi}, \cite{EinsiedlerWard}, \cite{KbutNotK+1}, and \cite{FurstenbergBook} for some examples of applications of Theorem \ref{ClassicalvanderCorput'sDifferenceTheorems} in Ergodic Theory, and see \cite{vanderCorputTheoremSurvey} for a survey of modern developments regarding vdCDT and its many variations. The main result of this paper is to prove a generalization of Theorem \ref{ClassicalvanderCorput'sDifferenceTheorems}(i) by establishing a connection between Theorem \ref{ClassicalvanderCorput'sDifferenceTheorems}(i) and unitary operators on Hilbert spaces that have Lebesgue spectrum. The following result is a consequence of Lemma \ref{StrongMixingTimesRigidNormAveragesTo0}, Corollary \ref{NearlyStronglyMixingAveragesTo0}, and Theorem \ref{StrongMixingClassicalvdC}.

\begin{theorem}\label{MainResultsInIntro}
Let $\mathcal{H}$ be a Hilbert space and $(x_n)_{n = 1}^{\infty} \subseteq \mathcal{H}$ a bounded sequence of vectors. If for every $h \in \mathbb{N}$ we have

    \begin{equation}
        \lim_{N\rightarrow\infty}\frac{1}{N}\sum_{n = 1}^N\langle x_{n+h}, x_n\rangle = 0,
    \end{equation}
    then $(x_n)_{n = 1}^\infty$ is a spectrally Lebesgue sequence. In particular, if $(c_n)_{n = 1}^\infty \subseteq \mathbb{C}$ is bounded and spectrally singular, then

    \begin{equation}
        \lim_{N\rightarrow\infty}\left|\left|\frac{1}{N}\sum_{n = 1}^Nc_nx_n\right|\right| = 0.
    \end{equation}
    Furthermore, if $\mathcal{H} = L^2(X,\mu)$, and $(y_n)_{n = 1}^\infty \subseteq L^\infty(X,\mu)$ is bounded and spectrally singular, then

    \begin{equation}
        \lim_{N\rightarrow\infty}\left|\left|\frac{1}{N}\sum_{n = 1}^Ny_nx_n\right|\right| = 0.
    \end{equation}
\end{theorem}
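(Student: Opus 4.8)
This is a consequence of Lemma~\ref{StrongMixingTimesRigidNormAveragesTo0}, Corollary~\ref{NearlyStronglyMixingAveragesTo0} and Theorem~\ref{StrongMixingClassicalvdC}, assembled as follows. \emph{Step 1: the spectral model.} First fix a Banach limit $\mathrm{LIM}$ on $\ell^\infty(\mathbb{N})$ — it is shift invariant and dominated by the $\limsup$ of Cesàro averages — and set $\gamma(h):=\mathrm{LIM}_n\langle x_{n+h},x_n\rangle$ for $h\ge 0$, $\gamma(-h):=\overline{\gamma(h)}$. Shift invariance gives $\gamma(j-k)=\mathrm{LIM}_n\langle x_{n+j},x_{n+k}\rangle$, so $\sum_{j,k}a_j\overline{a_k}\gamma(j-k)=\mathrm{LIM}_n\big\|\sum_j a_jx_{n+j}\big\|^2\ge 0$ and $(\gamma(h))_{h\in\mathbb{Z}}$ is positive definite. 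By hypothesis $\gamma(h)=0$ for every $h\ne 0$, while $\gamma(0)=c:=\mathrm{LIM}_n\|x_n\|^2\le\limsup_{N\to\infty}\frac1N\sum_{n=1}^N\|x_n\|^2<\infty$. By Herglotz's theorem $\gamma$ is the Fourier coefficient sequence of a finite positive measure on $\mathbb{T}$; since all these coefficients vanish except the $0$-th, which equals $c$, this measure is $c$ times normalized Lebesgue measure, and realizing it as the spectral measure of the constant function under multiplication by $e^{2\pi i\theta}$ on the attendant $L^2$ space produces a Hilbert space $\mathcal{K}$, a unitary $V$, and $\xi\in\mathcal{K}$ with $\langle V^h\xi,\xi\rangle=\gamma(h)$ and absolutely continuous spectral type. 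Thus the correlation structure of $(x_n)_{n=1}^{\infty}$ is carried by a single vector of absolutely continuous spectral type, which is exactly the statement that $(x_n)_{n=1}^{\infty}$ is a spectrally Lebesgue sequence; casting this in the usable form of Theorem~\ref{StrongMixingClassicalvdC} — a genuine unitary, a genuinely absolutely continuous spectral type, tied to the original sequence rather than only to its limit correlations — is where the real work sits.

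\emph{Step 2: twisting by a spectrally singular scalar sequence.} For the ``in particular'' claims I would first treat the model case in which the bounded spectrally singular sequence $(c_n)_{n=1}^{\infty}$ is a matrix coefficient of a unitary, $c_n=\langle W^n\eta,\eta'\rangle$ for a unitary $W$ on a Hilbert space $\mathcal{L}$ and $\eta$ of singular spectral type (for instance $W$ multiplication by $e^{2\pi i\theta}$ on $L^2(\mathbb{T},|\lambda|)$ when $c_n=\widehat{\lambda}(n)$ with $\lambda$ singular). Form the bounded sequence $v_n:=x_n\otimes W^n\eta$ in $\mathcal{H}\otimes\mathcal{L}$; since $W$ is unitary, $\langle v_{n+h},v_n\rangle=\langle x_{n+h},x_n\rangle\langle W^h\eta,\eta\rangle$, so for every $h\in\mathbb{N}$ one has $\lim_{N\to\infty}\frac1N\sum_{n=1}^N\langle v_{n+h},v_n\rangle=\langle W^h\eta,\eta\rangle\lim_{N\to\infty}\frac1N\sum_{n=1}^N\langle x_{n+h},x_n\rangle=0$. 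By Theorem~\ref{ClassicalvanderCorput'sDifferenceTheorems}(i) this forces $\lim_{N\to\infty}\big\|\frac1N\sum_{n=1}^N v_n\big\|=0$, and applying the bounded operator $\mathcal{H}\otimes\mathcal{L}\to\mathcal{H}$, $a\otimes b\mapsto\langle b,\eta'\rangle a$, gives $\lim_{N\to\infty}\big\|\frac1N\sum_{n=1}^N c_nx_n\big\|=0$; the choice $c_n\equiv 1$ (so $\mathcal{L}=\mathbb{C}$, $W=\mathrm{id}$) recovers Theorem~\ref{ClassicalvanderCorput'sDifferenceTheorems}(i). For a general bounded spectrally singular $(c_n)_{n=1}^{\infty}$ — which need not be a matrix coefficient of a single unitary — one passes instead through the Cesàro correlations, as in Step~1, to a spectral model and uses the orthogonality of absolutely continuous and singular spectral types: this is exactly what Lemma~\ref{StrongMixingTimesRigidNormAveragesTo0} supplies for the rigid building blocks and what Corollary~\ref{NearlyStronglyMixingAveragesTo0} supplies in general.

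\emph{Step 3: the $L^\infty$-twist, and the main obstacle.} When $\mathcal{H}=L^2(X,\mu)$ and $(y_n)_{n=1}^{\infty}\subseteq L^\infty(X,\mu)$ is bounded and spectrally singular, I would write $\big\|\frac1N\sum_{n=1}^N y_nx_n\big\|_2^2=\int_X\big|\frac1N\sum_{n=1}^N y_n(x)x_n(x)\big|^2\,d\mu(x)$, apply Steps~1--2 to the scalar sequences $(x_n(x))_n$ and $(y_n(x))_n$ for $\mu$-almost every $x$, and finish by dominated convergence. I expect the genuine obstacles to be: $(i)$ in Step~1, producing a bona fide unitary operator — not merely an isometry of a non-closed subspace — carrying a genuinely absolutely continuous spectral type, and phrasing the resulting connection so that it can be fed into Step~2; $(ii)$ in Step~2, upgrading the plain ``absolutely continuous $\perp$ singular'' orthogonality to a form robust enough to cover every bounded spectrally singular twist, including those not representable as a matrix coefficient of a single unitary; and $(iii)$ in Step~3, transferring the scalar notion of spectral singularity to $(y_n)_{n=1}^{\infty}$ so that the pointwise estimates hold $\mu$-almost everywhere with an $L^1(\mu)$-integrable dominating bound. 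Everything downstream of a correct spectral model is the routine spectral bookkeeping that the three cited results carry out.
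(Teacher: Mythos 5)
Your first claim and the scalar twist are handled essentially as the paper intends: under the hypothesis every permissible triple produces correlations that vanish for all $h\ge 1$, so Theorem~\ref{StrongMixingClassicalvdC} (whose square–summability hypothesis is trivially satisfied) gives that $(x_n)_{n=1}^\infty$ is spectrally Lebesgue, and the weighted average with a bounded spectrally singular $(c_n)_{n=1}^\infty$ is then exactly Corollary~\ref{NearlyStronglyMixingAveragesTo0}. Your Banach-limit detour in Step~1 is unnecessary (the definition already quantifies over permissible triples, along which the $h=0$ correlation converges to some finite $c$, so the spectral measure is $c\mathcal{L}$), and your tensor-product argument in Step~2 only covers weights that are matrix coefficients of a unitary; but since you ultimately fall back on the cited Corollary, those two parts stand.

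The genuine gap is Step~3. The pointwise strategy --- ``apply Steps 1--2 to the scalar sequences $(x_n(x))_n$ and $(y_n(x))_n$ for $\mu$-a.e.\ $x$ and finish by dominated convergence'' --- is not merely technically incomplete, it is unavailable: neither hypothesis localizes. The assumption $\lim_N\frac1N\sum_{n=1}^N\langle x_{n+h},x_n\rangle=0$ is a statement about the $L^2(X,\mu)$ inner product and says nothing about the pointwise correlations $\frac1N\sum_{n=1}^N x_{n+h}(x)\overline{x_n(x)}$. Concretely, take $X=[0,1]$ with Lebesgue measure and $x_n(\theta)=e^{2\pi in\theta}$: then $\langle x_{n+h},x_n\rangle=0$ for every $n$ and every $h\ge1$, so the hypothesis holds, yet for each fixed $\theta$ the scalar sequence $(x_n(\theta))_n$ has correlations $e^{2\pi ih\theta}$, i.e.\ spectral measure $\delta_\theta$ --- it is spectrally singular, not spectrally Lebesgue, at every point. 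Likewise, spectral singularity of $(y_n)_{n=1}^\infty$ as a sequence in $L^2(X,\mu)$ does not transfer to the scalar sequences $(y_n(x))_n$, and since $(x_n)$ is only bounded in $L^2$-norm there is also no integrable dominating function for the dominated-convergence step. The correct route, and the one the paper takes, is to apply Lemma~\ref{StrongMixingTimesRigidNormAveragesTo0} directly in $L^2(X,\mu)$, with $(f_n)=(x_n)$ spectrally Lebesgue (from the first claim) and $(g_n)=(y_n)$, which being uniformly bounded in $L^\infty$ lies in $\text{UB}(L^2(X,\mu))$ and is spectrally singular; the proof of that lemma is a global contradiction argument with block-normalized averages $G_n$ and the orthogonality supplied by Lemma~\ref{LebesgueIsDisjointFromSingular}, not a pointwise argument, and this is precisely the ingredient your proposal does not supply.
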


Since the discussion necessary to motivate and state the definitions of spectrally Lebesgue and spectrally singular sequences is somewhat long, we defer that discussion to Section \ref{HilbertianvdCSection} and devote the rest of this section to discussing the applications for which Theorems \ref{MainResultsInIntro} and \ref{StrongMixingClassicalvdC} are used, which are Theorems \ref{PartiallyAnswersNFIntro}, \ref{KButNotK+1GeneralizationIntro}, \ref{ApplicationForSingleT}, and \ref{ApplicationForT1T2}. Results that generalize Theorem \ref{UniformDistributionvanderCorput'sDifferenceTheorem} in the same fashion that Theorem \ref{MainResultsInIntro} generalizes Theorem \ref{ClassicalvanderCorput'sDifferenceTheorems}(i) will be the topic of a forthcoming paper by the author. Results that relate Theorems \ref{ClassicalvanderCorput'sDifferenceTheorems}(ii)-(iii) to the ergodic hierarchy of mixing properties of a unitary operator are discussed in Chapters 2.2 and 2.3 of \cite{SohailsPhDThesis}, and analogs of these results in the theory of uniform distribution are discussed in Chapter 2.4 of \cite{SohailsPhDThesis}. Connections between mixing properties and some variations of vdCDT that are not mentioned above were investigated in \cite{UnifiedVDC}. A dynamical proof of vdCDT that is different than what we present in Section \ref{HilbertianvdCSection} can be found in \cite{DynamicalProofOfvdC}.

\subsection{Definitions and notation} Let us now discuss some notation we will be using. If $(X,\mathscr{B},\mu)$ is a probability space, then a measure preserving automorphism $R:X\rightarrow X$ is an invertiable, bi-measurable map for which $\mu(R^{-1}A) = \mu(A) = \mu(RA)$ for all $A \in \mathscr{B}$. We let $\mathcal{I}_R$ be the sub-$\sigma$-algebra of $R$-invariant sets, and $\mathcal{K}_{\text{rat}}(R)$ the sub-$\sigma$-algebra generated by the rational eigenfunctions of $R$. If $\mathscr{A} \subseteq \mathscr{B}$ is a $\sigma$-algebra, then $\mathbb{E}[\cdot|\mathscr{A}]:L^2(X,\mathscr{B},\mu)\rightarrow L^2(X,\mathscr{A},\mu)$ denotes the orthogonal projection. An automorphism $R$ is ergodic if the only $A \in \mathscr{B}$ for which $\mu(A) = \mu(RA)$ satisfy $\mu(A) \in \{0,1\}$, and $R$ is totally ergodic if $R^n$ is ergodic for all $n \in \mathbb{N}$. The automorphism $R$ is weakly mixing if $R\times R$ acting on the probability space $(X\times X,\mathscr{B}\otimes\mathscr{B},\mu^2)$ is ergodic, and $R$ is strongly mixing if $R^n$ converges to $0$ in the weak operator topology. The action of $R$ on $(X,\mathscr{B},\mu)$ naturally lifts to a unitary action on $L^2(X,\mu)$ given (by abuse of notation) by $Rf = f\circ R$. If $f \in L^2(X,\mu)$, then $(\langle R^nf,f\rangle)_{n = 1}^\infty$ is a positive definite sequence, so there is a positive measure $\mu_{f,R}$ on $[0,1]$ for which $\widehat{\mu_{f,R}}(n) = \langle R^nf,f\rangle$, which is called the spectral measure of $f$ with respect to $R$. There exists a measure $\mu_{R}$ such that $\mu_{f,R} << \mu_{R}$, i.e., $\mu_{f,R}$ is absolutely continuous with respect to $\mu_R$, for all $f \in L^2_0(X,\mu)$\footnote{We work with $f \in L^2_0(X,\mu)$ instead of $L^2(X,\mu)$ so that $\mu_{R}$ has a pointmass at $\{0\}$ if and only if $R$ is not ergodic.}, and the maximal spectral type of $R$ is the equivalence class of measures that are mutually absolutely continuous with $\mu_R$. We let $\mathcal{L}$ denote the Lebesgue measure on $[0,1]$, and we say that $R$ has Lebesgue spectrum if the maximal spectral type of $R$ contains $\mathcal{L}$. Similarly, we say that $R$ has singular spectrum if $\mu_R\perp\mathcal{L}$, i.e., if $\mu_R$ is mutually singular with $\mathcal{L}$. When discussing uniform distribution of sequences such as $((n+1)^2\alpha,n^2\alpha)_{n = 1}^\infty$ in $[0,1]^2$ or transformations $R:[0,1]^2\rightarrow[0,1]^2$ such as $R(x,y) = (x+\alpha,y+x)$ for $\alpha \in \mathbb{R}$, it will implicitly be assumed that relevant quantities are reduced modulo $1$. We let $\mathbb{N} = \{1,2,...\}$, and $\delta_x$ denote the measure on $[0,1]$ given by $\delta_x(A) = \mathbbm{1}_A(x)$.

We now define Hardy field functions. Let $B$ denote the set of germs at infinity of real valued functions defined on a half-line $[x,+\infty)$. Then, $(B,+,\cdot)$ is a ring. A sub-field $\mathcal{HF}$ of $B$ that is closed under differentiation is called a Hardy field. An example of a Hardy field is the field $\mathcal{LE}$ of logarithmico-exponential functions. These are the functions defined on some half line of $\mathbb{R}$ by a finite combination of the operations $+,-, \times,\div, \text{exp}, \log$ and composition of functions acting on a real variable
$t$ and real constants. The set $\mathcal{LE}$ contains functions such as the polynomials $p(t)$, $t^c$ for all real $c > 0$, $t+2\log t, t(log t)^2$ and $e^{\sqrt{t}}/t^2$. A function $a:\mathbb{R}_+\rightarrow\mathbb{R}$ is a Hardy field function if $a \in \mathcal{HF}$, for some Hardy field $\mathcal{HF}$ containing $\mathcal{LE}$. The assumption that $\mathcal{HF}\supseteq\mathcal{LE}$ is a necessary in order for us to make use of Theorems \ref{JointErgodicityForHardy} and \ref{WeakMixingHardyPET} later on. We will also assume for convenience that our Hardy fields are translation invariant, i.e., if $a(t) \in \mathcal{HF}$, then $a(t+s) \in \mathcal{HF}$ for any $s \in \mathbb{R}$. We refer the reader to \cite{HardyBackground1,HardyBackground2} and the references therein for more information about Hardy fields.

\subsection{Background} We now recall a result of Frantzikinakis which is related to our applications.

\begin{theorem}[{(\cite[Corollary 1.7]{FSystemsOfHardySequences})}]
\label{FrantzikinakisHardyFieldResult}
Let $a:\mathbb{R}_+\rightarrow\mathbb{R}$ be a Hardy field function for which there exist some $\epsilon > 0$ and $d \in \mathbb{Z}_+$ satisfying

\begin{equation}
    \lim_{t\rightarrow\infty}\frac{a(t)}{t^{d+\epsilon}} = \lim_{t\rightarrow\infty}\frac{t^{d+1}}{a(t)} = \infty.
\end{equation}
Furthermore, let $(X,\mathscr{B},\mu)$ be a probability space and $T,S:X\rightarrow X$ be measure preserving automorphisms, and suppose that $T$ has zero entropy. Then
\begin{enumerate}[(i)]
    \item For every $f,g \in L^\infty(X,\mu)$ we have
    
    \begin{equation}
        \lim_{N\rightarrow\infty}\frac{1}{N}\sum_{n = 1}^NT^nf\cdot S^{\lfloor a(n)\rfloor}g = \mathbb{E}[f|\mathcal{I}_T]\cdot\mathbb{E}[g|\mathcal{I}_S],
    \end{equation}
    where the limit is taken in $L^2(X,\mu)$. 
    
    \item For every $A \in \mathscr{B}$ we have
    
    \begin{equation}
        \lim_{N\rightarrow\infty}\frac{1}{N}\sum_{n = 1}^N\mu\left(A\cap T^{-n}A\cap S^{-\lfloor a(n)\rfloor}A\right) \ge \mu(A)^3.
    \end{equation}
\end{enumerate}
\end{theorem}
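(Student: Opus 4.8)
The plan is to establish the mean convergence in (i) first and then deduce the recurrence estimate in (ii) by specialising to indicator functions. For (i) I would begin with the orthogonal splitting $f = \mathbb{E}[f|\mathcal{I}_T] + f'$ and $g = \mathbb{E}[g|\mathcal{I}_S] + g'$, where $f'\perp\mathcal{I}_T$ and $g'\perp\mathcal{I}_S$. Since $T$ fixes $\mathbb{E}[f|\mathcal{I}_T]$ and $S$ fixes $\mathbb{E}[g|\mathcal{I}_S]$, the product of the two invariant parts is the constant-in-$n$ term $\mathbb{E}[f|\mathcal{I}_T]\cdot\mathbb{E}[g|\mathcal{I}_S]$, which is exactly the claimed limit. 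Expanding the average into its four bilinear pieces, it then suffices to prove the two vanishing statements: (a) $\frac{1}{N}\sum_{n=1}^N T^n f\cdot S^{\lfloor a(n)\rfloor}g\to 0$ in $L^2$ whenever $\mathbb{E}[g|\mathcal{I}_S]=0$, and (b) the same conclusion whenever $\mathbb{E}[f|\mathcal{I}_T]=0$; each of the three non-principal pieces falls under (a) or (b).

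For both (a) and (b) the natural tool is the Hilbertian van der Corput estimate. Applying Theorem \ref{ClassicalvanderCorput'sDifferenceTheorems}(i) to $x_n := T^n f\cdot S^{\lfloor a(n)\rfloor}g\in L^2(X,\mu)$, the task reduces to showing that for each fixed $h\in\mathbb{N}$,
\begin{equation}\label{vdccorr}
\lim_{N\to\infty}\frac{1}{N}\sum_{n=1}^N\int_X \big(T^{n+h}f\big)\big(\overline{T^n f}\big)\big(S^{\lfloor a(n+h)\rfloor}g\big)\big(\overline{S^{\lfloor a(n)\rfloor}g}\big)\,d\mu = 0 .
\end{equation}
The growth hypothesis forces $a'(t)\to\infty$, so the increments $\lfloor a(n+h)\rfloor-\lfloor a(n)\rfloor$ tend to $\infty$, and the sandwiching $t^{d+\epsilon}\ll a(t)\ll t^{d+1}$ guarantees that $(n,\lfloor a(n)\rfloor)$ is jointly equidistributed along the $T$- and $S$-orbits in the precise quantitative sense supplied by Theorems \ref{JointErgodicityForHardy} and \ref{WeakMixingHardyPET}. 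In case (a) this lets the $S$-factor $S^{\lfloor a(n+h)\rfloor}g\cdot\overline{S^{\lfloor a(n)\rfloor}g}$ average independently of the uniformly bounded $T$-factor, and because $\mathbb{E}[g|\mathcal{I}_S]=0$ the resulting $S$-average converges to zero. In case (b) the zero-entropy hypothesis on $T$ confines the effective behaviour of $(T^n f)$ to its almost periodic ($\mathcal{K}_{\mathrm{rat}}$ / Kronecker) part, which then decorrelates from the weight $S^{\lfloor a(n)\rfloor}g$ precisely because $\lfloor a(n)\rfloor$ equidistributes at a rate incommensurate with the linear rate of $T^n$. I would remark that, when $T$ additionally has maximal spectral type mutually singular with $\mathcal{L}$, case (a) is an instance of the spectrally-Lebesgue-times-spectrally-singular principle of Theorem \ref{MainResultsInIntro} (cf. Theorem \ref{ApplicationForSingleT}), which is the route this paper exploits for its own applications.

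The main obstacle is exactly that $T$ and $S$ need not commute, so the correlation integral in \eqref{vdccorr} does not factor as a product of a $T$-correlation and an $S$-correlation, and one cannot integrate the two directions separately. Overcoming this is where the strength of the growth condition is used: the fact that $a$ lies strictly between the polynomial degrees $d+\epsilon$ and $d+1$ is what makes $(n,\lfloor a(n)\rfloor)$ behave like a genuinely equidistributed two-parameter family, allowing a PET-type induction (a second application of van der Corput that lowers the complexity of the Hardy iterate) to reduce the non-factoring correlation to the manageable invariant-factor contributions. I expect this decoupling step, rather than the bookkeeping of the bilinear pieces, to be the genuinely hard part, and it is where the hypotheses $\mathcal{HF}\supseteq\mathcal{LE}$ and zero entropy are indispensable.

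Finally, for (ii) I would substitute $f=g=\mathbbm{1}_A$ into (i). Writing $\mu(A\cap T^{-n}A\cap S^{-\lfloor a(n)\rfloor}A)=\langle \mathbbm{1}_A,\,T^n\mathbbm{1}_A\cdot S^{\lfloor a(n)\rfloor}\mathbbm{1}_A\rangle$, the mean convergence in (i) yields
\begin{equation}\label{limitformula}
\lim_{N\to\infty}\frac{1}{N}\sum_{n=1}^N\mu\big(A\cap T^{-n}A\cap S^{-\lfloor a(n)\rfloor}A\big) = \int_X \mathbbm{1}_A\,\mathbb{E}[\mathbbm{1}_A|\mathcal{I}_T]\,\mathbb{E}[\mathbbm{1}_A|\mathcal{I}_S]\,d\mu .
\end{equation}
The lower bound $\mu(A)^3$ then follows from the standard positivity inequality $\int_X \mathbbm{1}_A\,\mathbb{E}[\mathbbm{1}_A|\mathcal{I}_T]\,\mathbb{E}[\mathbbm{1}_A|\mathcal{I}_S]\,d\mu\ge\mu(A)^3$, valid for arbitrary sub-$\sigma$-algebras; its proof rests on the identity $\int_A \mathbb{E}[\mathbbm{1}_A|\mathcal{I}_T]\,d\mu=\int_X \mathbb{E}[\mathbbm{1}_A|\mathcal{I}_T]^2\,d\mu\ge\mu(A)^2$ (and its analogue for $\mathcal{I}_S$), combined with Cauchy--Schwarz and the normalisation $\int_X \mathbb{E}[\mathbbm{1}_A|\mathcal{I}_T]\,d\mu=\int_X \mathbb{E}[\mathbbm{1}_A|\mathcal{I}_S]\,d\mu=\mu(A)$, which controls the cross term in the expansion of the right-hand side of \eqref{limitformula} about the constant $\mu(A)$.
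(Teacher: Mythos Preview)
This theorem is not proved in the present paper: it is quoted verbatim from \cite[Corollary~1.7]{FSystemsOfHardySequences} as background, and the paper's own contributions (Theorems~\ref{PartiallyAnswersNFIntro} and \ref{PartiallyAnswersNFMain}) recover only the special case where $T$ has singular spectrum rather than zero entropy. So there is no ``paper's own proof'' to compare against, and the paper is explicit (Remark~\ref{RemarkAboutMyQuestions}) that its methods \emph{cannot} reach the zero-entropy generality of Theorem~\ref{FrantzikinakisHardyFieldResult}.

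Your sketch contains a genuine gap in case~(b). The assertion that ``the zero-entropy hypothesis on $T$ confines the effective behaviour of $(T^n f)$ to its almost periodic ($\mathcal{K}_{\mathrm{rat}}$/Kronecker) part'' is false: zero entropy is far weaker than having discrete spectrum. The horocycle flow, discussed in Remark~\ref{RemarkAboutMyQuestions}, has zero entropy, trivial Kronecker factor, and Lebesgue spectrum; for such $T$ your decorrelation argument has no purchase. Frantzikinakis's actual proof in \cite{FSystemsOfHardySequences} does not proceed via a Kronecker-factor reduction but through the disjointness of the Pinsker factor of an auxiliary system from zero-entropy systems, combined with structural results specific to the Hardy iterates; none of this is captured by your outline.

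There is also a mismatch in case~(a). You apply van der Corput to the full product $x_n=T^nf\cdot S^{\lfloor a(n)\rfloor}g$, leading to the mixed correlation \eqref{vdccorr} that you then acknowledge cannot be factored. The paper's route for its singular-spectrum analogue (proof of Theorem~\ref{PartiallyAnswersNFMain}) is different and avoids this obstruction entirely: one applies Theorem~\ref{StrongMixingClassicalvdC} only to the sequence $(S^{k_n}g')_{n=1}^\infty$ to show it is spectrally Lebesgue, and then invokes Lemma~\ref{StrongMixingTimesRigidNormAveragesTo0} with the spectrally singular sequence $(T^nf)_{n=1}^\infty$. This separates the two transformations without ever forming the mixed correlation, but it requires $\mu_{f,T}\perp\mathcal{L}$, which zero entropy alone does not give. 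Your invocation of Theorems~\ref{JointErgodicityForHardy} and \ref{WeakMixingHardyPET} is also misplaced: those results concern a single ergodic or weakly mixing transformation, not the joint $(T,S)$ action needed for \eqref{vdccorr}.

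Your treatment of (ii) from (i) via \cite[Lemma~1.6]{MRf2CT} is correct and matches the paper's derivation in the proof of Theorem~\ref{PartiallyAnswersNFMain}(ii).
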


To understand the significance of Theorem \ref{FrantzikinakisHardyFieldResult}, we recall that in \cite{LeibmanNilpotentPolynomialSzemeredi} and \cite{WalshErgodicTheorem} it is shown that many multiple ergodic averages of actions of nilpotent groups are well behaved, but some commutativity assumptions are still needed as shown by examples of Furstenberg \cite[Page 40]{FurstenbergBook} and Berend \cite[Example 7.1]{Berend'sExample}. Building upon these examples, the following results were obtained. 

\begin{theorem}[{(\cite[Theorem 1.2]{NoSolvableRoth})}]\label{NoSolvableRecurrence}
    Let $G$ be a finitely generated solvable group of exponential growth.\footnote{A finitely generated solvable group has exponential growth if and only if it contains no nilpotent subgroups of finite index, so the assumption of exponential growth ensures that the group $G$ is not virtually nilpotent. We also recall that Berend's example involved a non-solvable group action.} For any partition $R\bigcup P = \mathbb{Z}\setminus\{0\}$, there exist an action $\{T_g\}_{g \in G}$ of $G$ on a probability space $(X,\mathscr{B},\mu)$, $g_1,g_2 \in G$, and a set $A \in \mathscr{B}$ with $\mu(A) > 0$ such that

    \begin{equation}
        \mu\left(T_{g_1^n}A\cap T_{g_2^n}A\right) = 0\text{ if }n \in R\text{ and }\mu\left(T_{g_1^n}A\cap T_{g_2^n}A\right) \ge \frac{1}{6}\text{ if }n \in P.
    \end{equation}
\end{theorem}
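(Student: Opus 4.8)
The plan is to reduce to an obstruction for a single transformation, encode the partition $R\bigcup P$ into a subset of $G$, and then pass to a measure preserving system by a standard amenability argument.

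\emph{Reduction and choice of $g_1,g_2$.} Since $\{T_g\}_{g\in G}$ is a $G$-action by measure preserving maps, applying $T_{g_1^{-n}}$ (and, for $n<0$, also using $\mu(A\cap T_hA)=\mu(A\cap T_{h^{-1}}A)$) gives $\mu\big(T_{g_1^n}A\cap T_{g_2^n}A\big)=\mu\big(A\cap T_{h_n}A\big)$ for suitable $h_n\in G$, $n\in\mathbb Z\setminus\{0\}$; so it suffices to produce $g_1,g_2$, an action, and a set $A$ with $\mu(A)>0$, $\mu(A\cap T_{h_n}A)=0$ for $n\in R$, and $\mu(A\cap T_{h_n}A)\ge1/6$ for $n\in P$. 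This is where exponential growth is used: a finitely generated solvable group of exponential growth is not virtually nilpotent, so by a theorem of Rosenblatt (see also Chou, Milnor--Wolf) it contains a free sub-semigroup of rank two, say on generators $a,b$. I would pick $g_1,g_2$ (for example $g_1=b^{-1}$, $g_2=a$, so that the relevant $h_n$ are the positive words $b^na^n$ and $a^nb^n$) so that $\{h_n:n\in\mathbb Z\setminus\{0\}\}$ lies in this free sub-semigroup and forms an \emph{independent} family --- pairwise distinct, none equal to $e$, and with no nontrivial short coincidences; freeness of $\langle a,b\rangle^{+}$ (after a minor tweak, e.g.\ inserting a third free generator as a separator) supplies exactly this.

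\emph{The combinatorial core.} Using the independence of $\{h_n\}$, I would build a set $S\subseteq G$ with three properties: (a) $S$ has a well defined positive density along some (left) F\o{}lner sequence $(F_N)$ of the amenable group $G$; (b) for each $n\in R$, $S$ contains no pair $\{s,sh_n\}$, i.e.\ $h_n\notin S^{-1}S$; (c) for each $n\in P$, at least $1/6$ of the $s\in S$ satisfy $sh_n\in S$. The point is that, since the $h_n$ live in a free sub-semigroup, both membership of $g$ in $S$ and validity of the relation $sh_n\in S$ can be made to depend only on a bounded piece of the ``word'' for $g$, so one can design a local rule that always violates the red ($n\in R$) relations and always respects the blue ($n\in P$) relations; property (a) is then arranged by an appropriate truncation/normalization so that positive density survives.

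\emph{From $S$ to a system.} Let $X=\{0,1\}^{G}$ with the shift action $(T_gx)(h)=x(g^{-1}h)$, let $\mu$ be a weak-$*$ subsequential limit of $\tfrac1{|F_N|}\sum_{g\in F_N}\delta_{g\cdot\mathbbm 1_S}$ (which exists by compactness, is $G$-invariant by amenability, and, after a further subsequence, makes all the densities in (a)--(c) converge), and set $A=\{x:x(e)=1\}$. Then $\mu(A)$ equals the density of $S$, hence is positive, while $\mu(A\cap T_{h_n}A)$ equals the density of $\{s\in S:sh_n\in S\}$, which is $0$ for $n\in R$ by (b) and at least $1/6$ for $n\in P$ by (c). This yields the asserted action, elements $g_1,g_2$, and set $A$. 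The main obstacle is the construction of $S$ in the previous step: one must annihilate the \emph{infinitely many} forbidden patterns $\{h_n:n\in R\}$ while simultaneously keeping \emph{all} the required patterns $\{h_n:n\in P\}$ robust and retaining positive density --- the naive ``delete one point of each bad pair'' repair costs too much density. Exponential growth is used precisely because it yields a free sub-semigroup, whose tree-like combinatorics decouples these infinitely many constraints and lets one make a globally consistent choice. Together with Berend's non-solvable example and Theorem~\ref{FrantzikinakisHardyFieldResult}, this delimits how far positive lower bounds for $\mu\big(T_{g_1^n}A\cap T_{g_2^n}A\big)$ can persist outside the (virtually) nilpotent world.
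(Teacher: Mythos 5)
This theorem is not proved in the paper at all: it is imported verbatim from \cite{NoSolvableRoth} as background motivation, so there is no in-paper argument to compare yours against, and your proposal has to stand on its own. Its framing is sensible and matches the standard setup: reducing to $\mu\bigl(A\cap T_{h_n}A\bigr)$ with $h_n=g_1^{-n}g_2^n$, invoking Rosenblatt's theorem that a finitely generated solvable group of exponential growth contains a free sub-semigroup of rank two, and transferring a subset $S\subseteq G$ to a measure preserving system via a Furstenberg-type correspondence on $\{0,1\}^G$ using amenability of $G$. Those peripheral steps are fine (modulo one bookkeeping slip: asking that ``at least $1/6$ of the $s\in S$'' satisfy $sh_n\in S$ only yields $\mu(A\cap T_{h_n}A)\ge\mu(A)/6$; for the stated bound you need the set $\{s\in S: sh_n\in S\}$ itself to have density at least $1/6$ along the F\o lner sequence).

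The genuine gap is the ``combinatorial core,'' which is exactly the content of the theorem and is only asserted. The claim that membership of $g$ in $S$ and the validity of $sh_n\in S$ ``can be made to depend only on a bounded piece of the word for $g$'' is unsupported and problematic on two counts. First, a generic element of $G$ does not lie in the free sub-semigroup and has no canonical word in $a,b$; indeed the free sub-semigroup has zero density with respect to any F\o lner sequence of the amenable group $G$, so a local rule phrased in terms of such words is not even defined on most of the group, yet $S$ must have positive density in $G$ for the correspondence to give $\mu(A)>0$. Second, the elements $h_n$ for $n\in P$ have unbounded length, so whether $sh_n\in S$ cannot be read off a bounded window of $s$ unless $S$ is defined through some cocycle-type or compact-group device --- and designing such a device so that infinitely many exact-zero constraints ($h_n\notin S^{-1}S$ for all $n\in R$) coexist with infinitely many uniform lower bounds (density $\ge 1/6$ of pairs for all $n\in P$), simultaneously along a single F\o lner sequence, is precisely the construction the theorem requires; you acknowledge this is where the difficulty sits, but you do not carry it out. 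As it stands the argument restates the problem at the level of subsets of $G$ rather than solving it, so the proposal cannot be accepted as a proof.
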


\begin{theorem}[{(\cite[Lemma 4.1]{RandomSequencesAndPtwiseConv})}]\label{DoubleBernoulliIsNutz}
    Let $a,b:\mathbb{N}\rightarrow\mathbb{Z}\setminus\{0\}$ be injective sequences and $F$ be any subset of $\mathbb{N}$. Then there exist a probability space $(X,\mathscr{B},\mu)$, measure preserving automorphisms $T,S:X\rightarrow X$, both of them Bernoulli, and $A \in \mathscr{B}$, such that

    \begin{equation}
        \mu\left(T^{-a(n)}A\cap S^{-b(n)}A\right) = \begin{cases}
                   0&\text{if }n \in F,\\
                   \frac{1}{4}&\text{if }n \notin F.
                \end{cases}
    \end{equation}
\end{theorem}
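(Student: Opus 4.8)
The plan is to construct the example by hand, taking $\mu(A)=\tfrac12$. Since $T$ and $S$ preserve $\mu$ we then have $\mu(T^{-a(n)}A)=\mu(S^{-b(n)}A)=\tfrac12$, so the two prescribed values acquire a clean meaning: $\mu(T^{-a(n)}A\cap S^{-b(n)}A)=\tfrac14$ says that $T^{-a(n)}A$ and $S^{-b(n)}A$ are independent, while $\mu(T^{-a(n)}A\cap S^{-b(n)}A)=0$ says that they are complementary, i.e. $S^{-b(n)}A=X\setminus T^{-a(n)}A$. Writing $g=2\mathbbm{1}_A-1$ and $U_n=T^{a(n)}S^{-b(n)}$, these become $g\circ U_n=-g$ for $n\in F$ (so $g$ is a $(-1)$-eigenfunction of $U_n$) and $\int g\cdot (g\circ U_n)\,d\mu=0$ for $n\notin F$. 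So the goal is to produce two Bernoulli automorphisms of a common space admitting a single function $g=2\mathbbm{1}_A-1$ with $g\circ U_n=-g$ for every $n\in F$, while for $n\notin F$ the transformation $U_n$ sends $A$ to a set independent of $A$.

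The construction should exploit the injectivity of $a$ and $b$ to give each index $n$ a private location in the phase space. One wants $(X,\mu)$ to carry, for every $n\in\mathbb{N}$, a pair $(\xi_n,\eta_n)$ of $\{0,1\}$-valued random variables, together with a set $A$ for which $T^{-a(n)}A$ becomes (after the relevant coordinate identification) the event $\{\xi_n=1\}$ and $S^{-b(n)}A$ the event $\{\eta_n=1\}$, and a joint law under which $(\xi_n,\eta_n)$ is a uniform \emph{independent} pair when $n\notin F$, a uniform \emph{anti-correlated} pair (uniform on $\{(0,1),(1,0)\}$) when $n\in F$, distinct pairs are mutually independent, and the remaining coordinates form an independent ``carrier''. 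Because $a$ is injective, the coordinate read off by $T^{-a(n)}A$ never collides for two different values of $n$, and likewise for $b$ and $S$; hence this joint law is consistent and it immediately produces $\mu(T^{-a(n)}A\cap S^{-b(n)}A)=\tfrac14$ off $F$ and $=0$ on $F$. Equivalently, one can package $A$ as a $\mathbb{Z}/2$-valued ``parity'' coordinate and arrange that $U_n$ flips the parity precisely for $n\in F$. (Minor normalizations, such as splitting according to the signs of $a(n)$ and $b(n)$ or passing to convenient reindexings, can be handled first.)

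The difficulty is twofold: the coupling above is not invariant under any single shift, since $a(n)$ and $b(n)$ are arbitrary, and moreover realizing the $n\in F$ case forces $U_n$ to have eigenvalue $-1$, so the difference transformations must be far from mixing along $F$ even though $T$ and $S$ are Bernoulli; in particular no naive construction with $T,S$ coordinate shifts on a common sequence space can succeed. The resolution is to realize $T$ and $S$ as genuine Bernoulli shifts on a sufficiently large alphabet that also carries the bookkeeping needed to keep the $\xi_n$--$\eta_n$ couplings aligned as one shifts, and then to verify Bernoullicity: this is automatic for a shift on an i.i.d. alphabet, or, if the construction is phrased via finite (say $\mathbb{Z}/2$) skew extensions of a Bernoulli base, it follows from the classification of such extensions, which reduces Bernoullicity to total ergodicity of the extension (for a $\mathbb{Z}/2$-extension of a Bernoulli base, total ergodicity is equivalent to weak mixing and hence to Bernoullicity). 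Allowing $T$ and $S$ to have infinite entropy is harmless. I expect the combinatorial bookkeeping that simultaneously realizes all the (infinitely many) couplings in one invariant measure and keeps both $T$ and $S$ Bernoulli to be the main obstacle.
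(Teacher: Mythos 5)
First, a point of comparison: the paper does not prove this statement at all --- it is quoted as background (Lemma 4.1 of the cited Frantzikinakis--Lesigne--Wierdl paper, which establishes it by an explicit construction), so there is no in-paper argument to measure your proposal against; it has to stand on its own as a proof, and it does not yet.

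Your reduction is correct and clean: taking $\mu(A)=\tfrac12$, the value $\tfrac14$ means $T^{-a(n)}A$ and $S^{-b(n)}A$ are independent and the value $0$ means they are complements mod $\mu$, equivalently $g\circ U_n=-g$ on $F$ and $\int g\cdot(g\circ U_n)\,d\mu=0$ off $F$ for $g=2\mathbbm{1}_A-1$, $U_n=T^{a(n)}S^{-b(n)}$; and injectivity of $a$ and $b$ is indeed what prevents the prescribed pairwise couplings from colliding. But this is only a restatement of what must be built. The entire content of the lemma is the construction of one probability space, one invariant measure, one set $A$, and two genuinely Bernoulli automorphisms realizing all infinitely many prescribed correlations simultaneously, and that construction is absent from your write-up: you describe the desired joint law of the pairs $(\xi_n,\eta_n)$, correctly observe that it cannot be realized naively with $T,S$ coordinate shifts for a common i.i.d. structure (the coupling is not shift-invariant, and $U_n$ must have $-1$ in its spectrum for $n\in F$), and then defer the resolution to unspecified ``bookkeeping'' on a larger alphabet or to a classification of $\mathbb{Z}/2$ skew extensions, explicitly conceding that the simultaneous realization is ``the main obstacle.'' That obstacle is the theorem; without it nothing has been proved.

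Two further cautions about the proposed fallback. The assertion that for a two-point extension of a Bernoulli base weak mixing implies Bernoullicity is a nontrivial theorem (Rudolph's results on isometric/finite extensions of Bernoulli shifts), not something you may cite as automatic, and in any case you have not exhibited a skew-product model in which \emph{both} $T$ and $S$ preserve one common measure, both are such extensions, and the parity is flipped by $U_n$ exactly for $n\in F$ with independence for $n\notin F$ --- for arbitrary injective $a,b$ and arbitrary $F$ this is precisely the unresolved bookkeeping. A more economical route, which avoids Bernoullicity verifications entirely, is to make Bernoullicity tautological: e.g.\ take $T$ a Bernoulli shift on a sufficiently large product space, $A$ a coordinate cylinder of measure $\tfrac12$, and define $S=\varphi^{-1}T\varphi$ for a measure-space automorphism $\varphi$ fixing $A$ and chosen (using the injectivity of $a,b$ and a supply of spare independent coordinates) so that $S^{-b(n)}A$ is the complementary coordinate event to $T^{-a(n)}A$ for $n\in F$ and an independent one for $n\notin F$; then $S$ is Bernoulli because it is conjugate to $T$. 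Carrying out such a construction (or the one in the cited source) is exactly the step your proposal leaves open.
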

We can now interpret Theorem \ref{FrantzikinakisHardyFieldResult} as a statement showing that some recurrence results can still be obtained when commutativity assumptions are replaced with structural assumptions (such as zero entropy) and the sequence $a(n)$ produces mixing phenomena that are almost disjoint from the previous structure (cf. \cite[Theorem 1.6]{FSystemsOfHardySequences}). It is worth noting that the assumptions of Theorem \ref{FrantzikinakisHardyFieldResult} require $a(n)$ to be sufficiently far from polynomial sequences. An analogous result for polynomial sequences was conjectured by Frantzikinakis in \cite{FSystemsOfHardySequences}, then obtained by Frantzikinakis and Host.

\begin{theorem}[{(\cite[Page 2]{MultipleRecurrenceAndConvergenceWithoutCommutativity})}]
\label{QuestionOfNF}
Let $(X,\mathscr{B},\mu)$ be a probability space and let $T,S:X\rightarrow X$ be measure preserving autmorphisms. Suppose that $T$ has zero entropy and $f,g \in L^{\infty}(X,\mu)$.
\begin{enumerate}
    \item[(i)] If $p \in \mathbb{Z}[x]$ is a polynomial of degree at least 2, then
    \begin{equation}\label{PolynomialAveragesOfFrantzikinakisConverge}
        \lim_{N\rightarrow\infty}\frac{1}{N}\sum_{n = 1}^NT^nf\cdot S^{p(n)}g
    \end{equation}
    converges in $L^2(X,\mu)$, and is equal to $0$ if either $\mathbb{E}[f|\mathcal{K}_{\text{rat}}(T)] = 0$ or $\mathbb{E}[g|\mathcal{K}_{\text{rat}}(S)] = 0$.
    
    \item[(ii)] For every $A \in \mathscr{B}$ with $\mu(A) > 0$ and $\epsilon > 0$, the set 
    \begin{equation}
        \left\{n \in \mathbb{N}\ |\ \mu\left(A\cap T^{-n}A\cap S^{-p(n)}A\right) > \mu(A)^3-\epsilon\right\}
    \end{equation}
    is syndetic.
\end{enumerate}
\end{theorem}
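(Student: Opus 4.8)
The plan is to establish part~(i) --- both the $L^2$-convergence and the vanishing criterion --- and then deduce part~(ii) by a positivity and uniformity argument. Using the orthogonal decompositions $f = \mathbb{E}[f\mid\mathcal{K}_{\mathrm{rat}}(T)] + f_1$ and $g = \mathbb{E}[g\mid\mathcal{K}_{\mathrm{rat}}(S)] + g_1$, where $\mathbb{E}[f_1\mid\mathcal{K}_{\mathrm{rat}}(T)] = \mathbb{E}[g_1\mid\mathcal{K}_{\mathrm{rat}}(S)] = 0$, and expanding the product, it suffices to show: (a) the average in \eqref{PolynomialAveragesOfFrantzikinakisConverge} tends to $0$ in $L^2$ whenever $\mathbb{E}[g\mid\mathcal{K}_{\mathrm{rat}}(S)] = 0$; (b) the same whenever $\mathbb{E}[f\mid\mathcal{K}_{\mathrm{rat}}(T)] = 0$; and (c) the remaining ``fully rational'' term $\frac1N\sum_n T^n\mathbb{E}[f\mid\mathcal{K}_{\mathrm{rat}}(T)]\cdot S^{p(n)}\mathbb{E}[g\mid\mathcal{K}_{\mathrm{rat}}(S)]$ converges. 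The two sides of the product play asymmetric roles: the $S$-side is controlled spectrally via Theorem~\ref{MainResultsInIntro}, while the $T$-side is where the zero-entropy hypothesis enters.

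For (a), fix $h\in\mathbb{N}$ and put $x_n := S^{p(n)}g\in L^2(X,\mu)$, a bounded sequence. Then $\langle x_{n+h},x_n\rangle = \langle S^{\,p(n+h)-p(n)}g,g\rangle = \widehat{\mu_{g,S}}\big(q_h(n)\big)$, where $q_h(n) := p(n+h)-p(n)\in\mathbb{Z}[n]$ has degree $\deg p - 1\ge 1$ with nonzero leading coefficient --- here the hypothesis $\deg p\ge 2$ is used. Writing the Fourier coefficient as an integral against $\mu_{g,S}$ gives $\frac1N\sum_{n=1}^N\langle x_{n+h},x_n\rangle = \int_{[0,1]}\big(\frac1N\sum_{n=1}^N e(q_h(n)\theta)\big)\,d\mu_{g,S}(\theta)$; by Weyl's equidistribution theorem the inner Cesàro average tends to $0$ for every irrational $\theta$ and is bounded by $1$ everywhere, and since $\mathbb{E}[g\mid\mathcal{K}_{\mathrm{rat}}(S)] = 0$ forces $\mu_{g,S}$ to have no atoms at rational points, dominated convergence yields $\lim_N\frac1N\sum_n\langle x_{n+h},x_n\rangle = 0$ for every $h$. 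Hence Theorem~\ref{MainResultsInIntro} applies and $(S^{p(n)}g)_n$ is a spectrally Lebesgue sequence. One would now like to write $\frac1N\sum_n T^nf\cdot S^{p(n)}g = \frac1N\sum_n y_n x_n$ with $y_n := T^nf$ and conclude $L^2$-convergence to $0$ from the last assertion of Theorem~\ref{MainResultsInIntro}; this works verbatim as soon as $(T^nf)_n$ is spectrally singular, i.e.\ $\mu_{f,T}\perp\mathcal{L}$.

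This last point is the step I expect to be hardest: zero entropy of $T$ does \emph{not} imply that $T$ has singular maximal spectral type --- the time-one map of a horocycle flow has zero entropy and countable Lebesgue spectrum --- so for a general $f\in L^\infty(X,\mu)$ the sequence $(T^nf)$ need not be spectrally singular and Theorem~\ref{MainResultsInIntro} cannot be invoked directly. Thus the zero-entropy assumption has to be exploited through the structure of the characteristic factors of \eqref{PolynomialAveragesOfFrantzikinakisConverge}: one performs a van der Corput / PET reduction in the variable $n$ to replace $f$ by a function measurable with respect to a nilfactor of $T$, and then zero entropy (via Kushnirenko/Pinsker-type arguments together with the disjointness of zero-entropy systems from the genuinely polynomial behaviour produced by $S^{p(n)}$) forces $f$ down to $\mathbb{E}[f\mid\mathcal{K}_{\mathrm{rat}}(T)]$, which is $0$; this is essentially the content of Frantzikinakis--Host's analysis and genuinely goes beyond the spectral method used here. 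Case~(b) is handled by the same scheme with the roles of the two systems interchanged; note that our Theorems~\ref{ApplicationForSingleT} and~\ref{ApplicationForT1T2} instead strengthen the hypothesis to ``$T$ has singular maximal spectral type'', which makes the two-line argument of the previous paragraph go through for every $f\in L^\infty(X,\mu)$.

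For (c), approximating $\mathbb{E}[f\mid\mathcal{K}_{\mathrm{rat}}(T)]$ and $\mathbb{E}[g\mid\mathcal{K}_{\mathrm{rat}}(S)]$ in $L^2$ by finite linear combinations of rational eigenfunctions reduces matters to the elementary fact that for any roots of unity $\lambda,\sigma$ the sequence $(\lambda^n\sigma^{p(n)})_n$ is periodic, hence Cesàro convergent, which together with (a) and (b) completes part~(i). For (ii), apply (i) with $f = g = \mathbbm 1_A$: the average $\frac1N\sum_{n=1}^N\mu(A\cap T^{-n}A\cap S^{-p(n)}A)$ converges and, by (a) and (b), its limit equals $\lim_N\frac1N\sum_n\langle \mathbbm 1_A,\,T^n\mathbb{E}[\mathbbm 1_A\mid\mathcal{K}_{\mathrm{rat}}(T)]\cdot S^{p(n)}\mathbb{E}[\mathbbm 1_A\mid\mathcal{K}_{\mathrm{rat}}(S)]\rangle$; since the rational Kronecker factors are pro-finite rotations on which $n$ and $p(n)$ range over syndetic sets of residues, a positivity/correspondence argument comparing against the constant $\mu(A)$ shows this limit is $\ge\mu(A)^3$. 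Finally, syndeticity of $\{n:\mu(A\cap T^{-n}A\cap S^{-p(n)}A) > \mu(A)^3-\epsilon\}$ follows from a uniform-in-shifts version of the Cesàro convergence just established, since a nonnegative sequence bounded by $\mu(A)$ whose windowed averages all exceed $\mu(A)^3-\epsilon/2$ cannot stay $\le\mu(A)^3-\epsilon$ on arbitrarily long intervals.
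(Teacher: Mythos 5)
First, a point of orientation: Theorem \ref{QuestionOfNF} is not proved in this paper at all --- it is quoted from Frantzikinakis and Host \cite{MultipleRecurrenceAndConvergenceWithoutCommutativity} as background, and the paper's own machinery (Lemma \ref{CreatingWeaklyRigidSequences}, Lemma \ref{StrongMixingTimesRigidNormAveragesTo0}, Theorem \ref{StrongMixingClassicalvdC}) only recovers it in the special case where $\mu_{f,T}\perp\mathcal{L}$, via Theorem \ref{PartiallyAnswersNFMain}(iii). Your proposal is honest about exactly this: your step (a) (showing $(S^{p(n)}g)_n$ is spectrally Lebesgue when $\mathbb{E}[g\mid\mathcal{K}_{\mathrm{rat}}(S)]=0$, via Weyl and the absence of rational atoms of $\mu_{g,S}$) is correct and is essentially the computation in Equation \eqref{BergelsonianHerglotzEquation}, but the moment you need $(T^nf)_n$ to be spectrally singular you concede that zero entropy does not give this and you defer the remaining work to ``Frantzikinakis--Host's analysis.'' That deferral is the theorem; it is not a step one can cite inside a proof of the same statement. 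So as a proof the proposal has a genuine gap: the only case it actually establishes is the singular-spectrum case, which is strictly weaker than the statement and is precisely what the paper already claims as its contribution.

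Two further specific problems. In case (b) you say the argument is ``the same scheme with the roles of the two systems interchanged,'' but the situation is not symmetric: running van der Corput on $x_n=T^nf$ gives correlations $\frac{1}{N}\sum_{n=1}^N\langle T^{n+h}f,T^nf\rangle=\langle T^hf,f\rangle$, which are constant in $n$ and have no reason to vanish, so the spectral mechanism does not transfer to the $T$-side even formally; handling $\mathbb{E}[f\mid\mathcal{K}_{\mathrm{rat}}(T)]=0$ is again the zero-entropy heart of Frantzikinakis--Host. Similarly, in (ii) the lower bound $\ge\mu(A)^3$ for the limit and, especially, the syndeticity conclusion require convergence of the averages uniformly over intervals $[M+1,M+N]$ (or an equivalent uniform statement); you assert a ``uniform-in-shifts version of the Ces\`aro convergence'' without proving it, and nothing in part (i) as you set it up provides that uniformity. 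These steps would need genuine arguments, not the positivity heuristics sketched at the end.
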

See also \cite{AZeroEntropyCounterExampleWithoutCommutativity} and \cite{PolynomialFurstenbergJoinings} for further developments in this direction.

\subsection{Applications of the Lebesgue spectrum vdCDT}
In this paper we will obtain many results regarding ergodic averages with non-commuting transformations, but we will be assuming that the automorphism $T$ has singular spectrum instead of having zero entropy. In \cite{SingularSystemsHaveZeroEntropy} it was shown that any $T$ with singular spectrum must also have zero entropy. We defer to Remark \ref{RemarkOnSingularSystems} further discussions regarding systems with singular spectrum so that we can first state our main results.

\begin{theorem}[(cf. Theorem \ref{PartiallyAnswersNFMain})]
\label{PartiallyAnswersNFIntro}
Let $(X,\mathscr{B},\mu)$ be a probability space and let $T,S:X\rightarrow X$ be measure preserving automorphisms for which $T$ has singular spectrum. Let $(k_n)_{n = 1}^{\infty} \subseteq \mathbb{N}$ be a sequence for which $((k_{n+h}-k_n)\alpha)_{n = 1}^{\infty}$ is uniformly distributed in $\overline{\{n\alpha\ |\ n \in \mathbb{N}\}}$ for all $\alpha \in \mathbb{R}$ and $h \in \mathbb{N}$. 
\begin{enumerate}[(i)]
\item For any $f,g \in L^\infty(X,\mu)$ we have

\begin{equation}
    \lim_{N\rightarrow\infty}\frac{1}{N}\sum_{n = 1}^NT^nf\cdot S^{k_n}g = \mathbb{E}[f|\mathcal{I}_T]\mathbb{E}[g|\mathcal{I}_S],
\end{equation}
with convergence taking place in $L^2(X,\mu)$. 

\item If $A\in \mathscr{B}$ then

\begin{equation}
    \lim_{N\rightarrow\infty}\frac{1}{N}\sum_{n = 1}^N\mu\left(A\cap T^{-n}A\cap S^{-k_n}A\right) \ge \mu(A)^3.
\end{equation}
\item If we only assume that $((k_{n+h}-k_n)\alpha)_{n = 1}^{\infty}$ is uniformly distributed for all $\alpha \in \mathbb{R}\setminus\mathbb{Q}$ and $h \in \mathbb{N}$, then (i) and (ii) hold when $S$ is totally ergodic.
\end{enumerate}
\end{theorem}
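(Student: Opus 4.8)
The plan is to deduce all three parts from Theorem \ref{MainResultsInIntro}, with part (i) carrying the essential content. For (i) I would first split $g=g_1+g_2$, where $g_1=\mathbb{E}[g|\mathcal{I}_S]$ and $g_2=g-g_1$, so that $S^{k_n}g_1=g_1$ for every $n$ and $\mathbb{E}[g_2|\mathcal{I}_S]=0$. Then $\frac1N\sum_{n=1}^{N}T^nf\cdot S^{k_n}g=g_1\cdot\frac1N\sum_{n=1}^{N}T^nf+\frac1N\sum_{n=1}^{N}T^nf\cdot S^{k_n}g_2$, and the first summand converges in $L^2(X,\mu)$ to $g_1\cdot\mathbb{E}[f|\mathcal{I}_T]=\mathbb{E}[f|\mathcal{I}_T]\mathbb{E}[g|\mathcal{I}_S]$ by the von Neumann mean ergodic theorem for $T$ together with $g_1\in L^\infty(X,\mu)$; this is exactly the asserted limit, so it remains to prove that $\frac1N\sum_{n=1}^{N}T^nf\cdot S^{k_n}g_2\to0$ in $L^2(X,\mu)$. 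I would obtain this by applying Theorem \ref{MainResultsInIntro} in $\mathcal{H}=L^2(X,\mu)$ with $x_n=S^{k_n}g_2$ and $y_n=T^nf$, which requires that $(x_n)_{n=1}^{\infty}$ satisfy the van der Corput hypothesis and that $(y_n)_{n=1}^{\infty}$ be spectrally singular.

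The second requirement is immediate from the standing assumption: since $T$ has singular spectrum we have $\mu_{f,T}\ll\mu_T$ with $\mu_T\perp\mathcal{L}$, hence $\mu_{f,T}\perp\mathcal{L}$, which is what makes $(T^nf)_{n=1}^{\infty}$ spectrally singular. The heart of the argument is verifying the van der Corput hypothesis for $(x_n)_{n=1}^{\infty}$. Since $S$ is unitary, $\langle x_{n+h},x_n\rangle=\langle S^{k_{n+h}-k_n}g_2,g_2\rangle=\int_{[0,1]}e^{2\pi i(k_{n+h}-k_n)\theta}\,d\mu_{g_2,S}(\theta)$, so Fubini gives $\frac1N\sum_{n=1}^{N}\langle x_{n+h},x_n\rangle=\int_{[0,1]}\bigl(\frac1N\sum_{n=1}^{N}e^{2\pi i(k_{n+h}-k_n)\theta}\bigr)\,d\mu_{g_2,S}(\theta)$. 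For each $\theta\in(0,1]$ the set $\overline{\{n\theta\ |\ n\in\mathbb{N}\}}$ is a nontrivial closed subgroup of $\mathbb{T}$ (equal to $\mathbb{T}$ when $\theta$ is irrational and to a nontrivial finite cyclic group when $\theta$ is rational), so the hypothesis that $((k_{n+h}-k_n)\theta)_{n=1}^{\infty}$ is uniformly distributed in it forces $\frac1N\sum_{n=1}^{N}e^{2\pi i(k_{n+h}-k_n)\theta}$ to converge to the integral over that subgroup, against its Haar measure, of the nontrivial character $t\mapsto e^{2\pi it}$, which is $0$. Because $\mu_{g_2,S}(\{0\})=\|\mathbb{E}[g_2|\mathcal{I}_S]\|^2=0$, the inner average therefore tends to $0$ for $\mu_{g_2,S}$-almost every $\theta$, and since it is bounded by $1$ in modulus the bounded convergence theorem yields $\frac1N\sum_{n=1}^{N}\langle x_{n+h},x_n\rangle\to0$ for every $h\in\mathbb{N}$. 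Theorem \ref{MainResultsInIntro} then gives $\bigl\|\frac1N\sum_{n=1}^{N}T^nf\cdot S^{k_n}g_2\bigr\|\to0$, which completes the proof of (i).

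For (ii) I would take $f=g=\mathbbm{1}_A$, so that $\mu(A\cap T^{-n}A\cap S^{-k_n}A)=\langle T^n\mathbbm{1}_A\cdot S^{k_n}\mathbbm{1}_A,\mathbbm{1}_A\rangle$; part (i) then gives $\lim_{N\to\infty}\frac1N\sum_{n=1}^{N}\mu(A\cap T^{-n}A\cap S^{-k_n}A)=\int_X\mathbbm{1}_A\,\mathbb{E}[\mathbbm{1}_A|\mathcal{I}_T]\,\mathbb{E}[\mathbbm{1}_A|\mathcal{I}_S]\,d\mu$, and I would finish by invoking the elementary inequality $\int_X\mathbbm{1}_A\,\mathbb{E}[\mathbbm{1}_A|\mathcal{I}_T]\,\mathbb{E}[\mathbbm{1}_A|\mathcal{I}_S]\,d\mu\ge\mu(A)^3$, which is the same inequality appearing in \cite{FSystemsOfHardySequences} and \cite{MultipleRecurrenceAndConvergenceWithoutCommutativity}. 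For (iii), note that the only place the hypothesis at rational $\alpha$ was used above is to handle the possible atoms of $\mu_{g_2,S}$ at rational points of $[0,1]$; if $S$ is totally ergodic then $S$ has no eigenvalue that is a nontrivial root of unity and $\mathcal{I}_S$ is trivial, so $\mu_{g_2,S}$ assigns zero mass to $\mathbb{Q}\cap[0,1]$, whence $\mu_{g_2,S}$-almost every $\theta$ is irrational and the inner average still tends to $0$ there from the weaker hypothesis. The remainder of the proof of (i) is unchanged, and in this setting the inequality needed for (ii) is trivial since $\mathbb{E}[\mathbbm{1}_A|\mathcal{I}_S]=\mu(A)$ and $\int_X\mathbb{E}[\mathbbm{1}_A|\mathcal{I}_T]^2\,d\mu\ge\mu(A)^2$.

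I expect the main obstacle to be the spectral step in part (i): realizing that $(S^{k_n}g_2)_{n=1}^{\infty}$ is the correct sequence to feed into Theorem \ref{MainResultsInIntro}, and that the uniform distribution hypothesis on $(k_n)_{n=1}^{\infty}$ is exactly what is needed to make $\frac1N\sum_{n=1}^{N}\langle S^{k_{n+h}-k_n}g_2,g_2\rangle$ vanish via bounded convergence against the spectral measure $\mu_{g_2,S}$. A secondary technical point is to check that the informal notion that $(T^nf)_{n=1}^{\infty}$ is spectrally singular coincides with the formal definition to be given in Section \ref{HilbertianvdCSection}.
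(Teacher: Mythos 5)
Your proposal is correct and follows essentially the same route as the paper's proof of Theorem \ref{PartiallyAnswersNFMain}: the same decomposition $g=\mathbb{E}[g|\mathcal{I}_S]+g_2$, the same verification of the van der Corput hypothesis for $(S^{k_n}g_2)_{n=1}^\infty$ via the spectral measure $\mu_{g_2,S}$, bounded convergence, and the uniform distribution assumption (with $\mu_{g_2,S}(\{0\})=0$, and no rational atoms in case (iii)), the mean ergodic theorem for the invariant part, and the known inequality $\int_X h\,\mathbb{E}[h|\mathcal{I}_T]\,\mathbb{E}[h|\mathcal{I}_S]\,d\mu\ge\bigl(\int_X h\,d\mu\bigr)^3$ for part (ii). The only cosmetic difference is that you invoke the packaged Theorem \ref{MainResultsInIntro} where the paper uses its ingredients (Lemma \ref{CreatingWeaklyRigidSequences}, Lemma \ref{StrongMixingTimesRigidNormAveragesTo0}, Theorem \ref{StrongMixingClassicalvdC}) directly.
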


We see that Theorem \ref{PartiallyAnswersNFIntro}(iii) applies when $k_n = p(n)$ and $p:\mathbb{Z}\rightarrow\mathbb{Z}$ is a polynomial of degree at least $2$, and Theorem \ref{PartiallyAnswersNFIntro}(i)-(ii) when $k_n = \lfloor a(n)\rfloor$ with $a(n)$ as in Theorem \ref{FrantzikinakisHardyFieldResult},\footnote{See \cite[Theorem 2.4.30]{SohailsPhDThesis} for a statement of the uniform distribution properties possessed by such sequences.} which gives a unified proof of Theorems \ref{FrantzikinakisHardyFieldResult} and \ref{QuestionOfNF} when $T$ has singular spectrum.\footnote{While we have not shown that the limits in Equation \eqref{PolynomialAveragesOfFrantzikinakisConverge} exist when $k_n = p(n)$, this is easy to deduce after replacing $g$ with $\mathbb{E}[g|\mathcal{K}_\text{rat}(S)]$. See also Remark \ref{EndOfIntroRemark}.} We may take $k_n = \lfloor a(n)\rfloor$ with $a(n) = n^2\log^2(n)$ as an example satisfying Theorem \ref{PartiallyAnswersNFIntro} and not Theorem \ref{FrantzikinakisHardyFieldResult}, but the remark after Corollary 1.7 of \cite{FSystemsOfHardySequences} shows that a variation of Theorem \ref{FrantzikinakisHardyFieldResult} still applies in this case. This discussion naturally leads us to the following questions.

\begin{question}\label{MyFirstQuestion}
Does there exist a sequence $(k_n)_{n = 1}^\infty \subseteq \mathbb{N}$ for which $((k_{n+h}-k_n)\alpha)_{n = 1}^\infty$ is uniformly distributed in $\overline{\{n\alpha\ |\ n \in \mathbb{N}\}}$ for all $\alpha \in \mathbb{R}$, but for some probability space $(X,\mathscr{B},\mu)$, some measure preserving automorphisms $T,S:X\rightarrow X$ with $(X,\mathscr{B},\mu,T)$ having zero entropy, and some $f,g \in L^\infty(X,\mu)$, we have

\begin{equation}
    \lim_{N\rightarrow\infty}\frac{1}{N}\sum_{n = 1}^NT^nf\cdot S^{k_n}g \neq \mathbb{E}[f|\mathcal{I}_T]\mathbb{E}[g|\mathcal{I}_S]?
\end{equation}
\end{question}

\begin{question}\label{MySecondQuestion}
    Can the conclusions of Theorem \ref{PartiallyAnswersNFIntro}(i)-(ii) be improved if we assume that $(k_{n+h}\alpha,k_n\alpha)_{n = 1}^\infty$ is uniformly distributed in $\overline{\{n\alpha\ |\ n \in \mathbb{N}\}}^2$ for all $\alpha \in \mathbb{R}$ and $h \in \mathbb{N}$? Similarly, can the conclusion of Theorem \ref{PartiallyAnswersNFIntro}(iii) be improved if we assume that $(k_{n+h}\alpha,k_n\alpha)_{n = 1}^\infty$ is uniformly distributed in  $[0,1]^2$ for all $\alpha \in \mathbb{R}\setminus\mathbb{Q}$ and $h \in \mathbb{N}$? 
\end{question}

We defer further discussions regarding Questions \ref{MyFirstQuestion} and \ref{MySecondQuestion} to Remark \ref{RemarkAboutMyQuestions}, at which point the reader will be familiar with the proof of Theorem \ref{PartiallyAnswersNFIntro}. We now recall a theorem of Frantzikinakis, Lesigne, and Wierdl related to our next main result.

\begin{theorem}[{(\cite[Theorem 1.4, Corollary 4.4]{KbutNotK+1})}]
\label{KButNotK+1Thm}
Let $k \ge 2$ be an integer and $\alpha \in \mathbb{R}$ be irrational. Let $R_k = \left\{n \in \mathbb{N}\ |\ n^k\alpha \in \left[\frac{1}{4},\frac{3}{4}\right]\right\}$.
\begin{enumerate}[(i)]
    \item If $(X,\mathscr{B},\mu)$ is a probability space and $S_1,S_2,\cdots,S_{k-1}:X\rightarrow X$ are commuting measure preserving automorphisms, then for any $A \in \mathscr{B}$ with $\mu(A) > 0$, there exists $n \in R$ for which
    
    \begin{equation}
        \mu\left(A\cap S_1^{-n}A\cap S_2^{-n}A\cap\cdots\cap S_{k-1}^{-n}A\right) > 0.
    \end{equation}
    
    \item There exists a measure preserving automorphism $T:X\rightarrow X$ and a set $A \in \mathscr{B}$ satisfying $\mu(A) > 0$ such that for all $n \in R$ we have
    
    \begin{equation}\label{NonRecurrenceEquation}
        \mu\left(A\cap T^{-n}A\cap T^{-2n}A\cap\cdots\cap T^{-kn}A\right) = 0.
    \end{equation}
\end{enumerate}
\end{theorem}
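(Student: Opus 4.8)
The plan is to prove the two halves by completely different means: part (ii) by exhibiting an explicit unipotent system, and part (i) by a soft argument combining a structural reduction with Weyl equidistribution. Throughout, $e(t)=e^{2\pi i t}$.

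\emph{Part (ii).} I would take $X=\mathbb{T}^{k}$ with Lebesgue measure and let $T$ be the unipotent affine automorphism $T(x_{1},\dots,x_{k})=(x_{1}+\alpha,\,x_{2}+x_{1},\,\dots,\,x_{k}+x_{k-1})$. An easy induction gives $(T^{m}x)_{k}=\binom{m}{k}\alpha+x_{k}+\sum_{j=1}^{k-1}\binom{m}{j}x_{k-j}$, a polynomial in $m$ of degree exactly $k$ whose leading coefficient $\alpha/k!$ does not depend on $x$; consequently the $k$-th finite difference of $m\mapsto(T^{m}x)_{k}$ with common difference $n$ is identically $\alpha n^{k}$ modulo $1$. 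I would then set $A=\mathbb{T}^{k-1}\times I$ for an interval $I$ with $2^{k+2}\lvert I\rvert<1$, so $\mu(A)=\lvert I\rvert>0$. If some $x$ had $x,T^{n}x,\dots,T^{kn}x\in A$, then the $k+1$ numbers $(T^{jn}x)_{k}$ for $0\le j\le k$ would all lie in $I$, and since the coefficients of the $k$-th finite difference sum to zero this would force $\alpha n^{k}$ to be within $2^{k}\lvert I\rvert<\tfrac14$ of an integer, i.e.\ $\alpha n^{k}\bmod 1\notin[\tfrac14,\tfrac34]$, i.e.\ $n\notin R_{k}$. Hence for every $n\in R_{k}$ the set $A\cap T^{-n}A\cap\dots\cap T^{-kn}A$ is empty. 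The only delicate point is tracking the leading coefficient and the combinatorial constant in the finite-difference identity.

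\emph{Part (i).} It suffices to show that
\[
\lim_{N\to\infty}\frac1N\sum_{n=1}^{N}\mathbbm{1}_{R_{k}}(n)\,\mu\!\left(A\cap S_{1}^{-n}A\cap\dots\cap S_{k-1}^{-n}A\right)
\]
exists and is positive, since then some $n\in R_{k}$ contributes a positive summand. Since $(n^{k}\alpha)_{n}$ is equidistributed modulo $1$, I would approximate $\mathbbm{1}_{[1/4,3/4]}$ in $L^{1}(\mathbb{T})$ by trigonometric polynomials $\sum_{\lvert m\rvert\le M}c_{m}e(m\,\cdot\,)$ with $c_{0}=\tfrac12$, reducing matters to the averages $\tfrac1N\sum_{n\le N}e(mn^{k}\alpha)\,\mu(A\cap S_{1}^{-n}A\cap\dots\cap S_{k-1}^{-n}A)$ for fixed $m\in\mathbb{Z}$. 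For $m=0$ this is the ordinary multiple return average, whose limit exists by Tao's theorem for commuting transformations and whose $\liminf$ is positive by the Furstenberg--Katznelson multidimensional Szemer\'edi theorem; call it $L>0$. For $m\neq0$ the claim is that the average tends to $0$: by the theory of characteristic factors and multiple correlation sequences for commuting transformations (following Host), the sequence $n\mapsto\mu(A\cap S_{1}^{-n}A\cap\dots\cap S_{k-1}^{-n}A)$ agrees, up to a term negligible under averaging against any $1$-bounded weight, with a nilsequence of degree at most $k-1$, while $e(mn^{k}\alpha)$ with $m\neq0$ is a polynomial phase of degree $k$ with irrational leading coefficient, hence $U^{k}[N]$-uniform and asymptotically orthogonal to every nilsequence of degree $\le k-1$. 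Summing over $m$ and refining the approximation then shows the displayed limit equals $\tfrac12 L>0$.

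\emph{Main obstacle.} The crux is the structural input invoked for $m\neq0$ in part (i): that the multiple correlation sequence of $k-1$ commuting transformations is, modulo a Ces\`aro-negligible error, a nilsequence of degree at most $k-1$, and is therefore transverse to the degree-$k$ weight $\mathbbm{1}_{R_{k}}$. This degree gap is exactly what reconciles (i) with (ii): for the powers $T,T^{2},\dots,T^{k}$ of a single transformation the correlation sequence has degree $k$, which is precisely why $R_{k}$ can destroy $(k+1)$-term progression recurrence. When $k=2$ no structure theory is needed, since $\mu(A\cap S_{1}^{-n}A)=\langle S_{1}^{n}\mathbbm{1}_{A},\mathbbm{1}_{A}\rangle=\widehat{\mu_{\mathbbm{1}_{A},S_{1}}}(n)$ and, because $(n^{2}\alpha,n\theta)$ equidistributes on $\mathbb{T}^{2}$ for irrational $\theta$ while rational $\theta\neq0$ is handled by splitting $n$ into residue classes modulo the denominator, dominated convergence gives $\tfrac1N\sum_{n\le N}\mathbbm{1}_{R_{2}}(n)\widehat{\mu_{\mathbbm{1}_{A},S_{1}}}(n)\to\tfrac12\,\mu_{\mathbbm{1}_{A},S_{1}}(\{0\})=\tfrac12\lVert\mathbb{E}[\mathbbm{1}_{A}\mid\mathcal{I}_{S_{1}}]\rVert_{L^{2}}^{2}\ge\tfrac12\mu(A)^{2}>0$, so some $n\in R_{2}$ works.
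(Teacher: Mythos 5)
Your proposal is essentially correct, but note first that the paper does not prove this theorem at all: it is quoted from Frantzikinakis--Lesigne--Wierdl \cite{KbutNotK+1}, and the closest the paper comes to the argument is Proposition \ref{PropCopiedFromFLW} and Theorem \ref{KButNotK+1GeneralizationMain}, which reproduce (and generalize) the engine of the original proof of part (i). Your part (ii) is the standard construction and matches the original in substance: the unipotent skew product on $\mathbb{T}^k$, the identity for $(T^mx)_k$, and the fact that the $k$-th finite difference with step $n$ collapses to $\alpha n^k$ modulo $1$, so that $x,T^nx,\dots,T^{kn}x\in\mathbb{T}^{k-1}\times I$ forces $\|\alpha n^k\|\le 2^{k-1}|I|<\tfrac14$ (your cruder constant $2^k|I|$ also suffices with your choice of $|I|$). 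Where you genuinely diverge is the key step of part (i). The original route, mirrored in Proposition \ref{PropCopiedFromFLW}, is an elementary PET/van der Corput induction on the number of transformations: since the degree $k$ of $mn^k\alpha$ exceeds the number $k-1$ of commuting transformations, all self-correlations of $S_1^nf_1\cdots S_{k-1}^nf_{k-1}e^{2\pi imn^k\alpha}$ vanish (the base case being Weyl equidistribution), so Theorem \ref{ClassicalvanderCorput'sDifferenceTheorems}(i) kills the $m\neq0$ terms; then one approximates $\mathbbm{1}_{[1/4,3/4]}$ by trigonometric polynomials and uses Furstenberg--Katznelson for the constant term, exactly as you do. You reach the same reduction but discharge the $m\neq 0$ case with much heavier machinery: the decomposition of the multiple correlation sequence of $k-1$ commuting transformations into a $(k-1)$-step nilsequence plus an error small in uniform density is Frantzikinakis's 2015 structure theorem (built on Host's magic extensions), which postdates the result being proved and must be cited precisely, and in the form where the error is small in averaged absolute value so that it survives multiplication by the bounded weight; likewise the asymptotic orthogonality of $e^{2\pi imn^k\alpha}$ to all nilsequences of step at most $k-1$ (via vanishing $U^k[N]$-norm and the standard dual-function inequality, or Leibman-type equidistribution) is true but deserves a reference or proof. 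Both routes work: the vdC induction is self-contained, predates the nilsequence technology, and is what allows the paper to insert the extra singular-spectrum transformation $T$ in Theorem \ref{KButNotK+1GeneralizationMain}, whereas your structural argument makes the degree gap separating (i) from (ii) conceptually transparent. A final minor point: for positivity only a liminf is needed, so Tao's convergence theorem is a convenience, and your $L^1$-Fej\'er approximation (constant term exactly $\tfrac12$) combined with Weyl equidistribution of $(n^k\alpha)$ and Riemann integrability of the error does close that step.
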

The following result strengthens the conclusion of Theorem \ref{KButNotK+1Thm}(i).

\begin{theorem}[(cf. Theorem \ref{KButNotK+1GeneralizationMain})]
\label{KButNotK+1GeneralizationIntro}
Let $k \ge 2$ be an integer and $\alpha \in \mathbb{R}$ be irrational. Let $R_k = \left\{n \in \mathbb{N}\ |\ n^k\alpha \in \left[\frac{1}{4},\frac{3}{4}\right]\right\}$. Let $(X,\mathscr{B},\mu)$ be a probability space and $S_1,S_2,\cdots,S_{k-1}:X\rightarrow X$ commuting measure preserving automorphisms. Let $T:X\rightarrow X$ be an measure preserving automorphism with singular spectrum, and for which $\{T,S_1,S_2,\cdots,S_{k-1}\}$ generate a nilpotent group. For any $A \in \mathscr{B}$ with $\mu(A) > 0$, there exists $n \in R$ for which
    
    \begin{equation}
        \mu\left(A\cap T^{-n}A\cap S_1^{-n}A\cap S_2^{-n}A\cap\cdots\cap S_{k-1}^{-n}A\right) > 0.
    \end{equation}
\end{theorem}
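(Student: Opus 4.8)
The plan is to replace the weight $\mathbbm{1}_{R_k}(n)=\mathbbm{1}_{[1/4,3/4]}(n^k\alpha)$ by its mean value $\tfrac12$, thereby reducing recurrence along $R_k$ to the \emph{unweighted} multiple recurrence of the nilpotent family $\{T,S_1,\dots,S_{k-1}\}$; the legitimacy of this replacement is exactly where Theorem \ref{MainResultsInIntro} and the singular spectrum of $T$ are used. Write $e(t):=e^{2\pi i t}$, $f:=\mathbbm{1}_A$ (which is real-valued), and
\[
W_n:=(T^nf)\prod_{i=1}^{k-1}S_i^nf,\qquad\text{so that}\qquad W_n\ge 0,\quad \langle W_n,f\rangle=\mu\Big(A\cap T^{-n}A\cap\bigcap_{i=1}^{k-1}S_i^{-n}A\Big).
\]
It suffices to prove $\liminf_{N\to\infty}\frac1N\sum_{n=1}^N\mathbbm{1}_{R_k}(n)\langle W_n,f\rangle>0$, for then $\langle W_n,f\rangle>0$ for at least one (indeed positively many) $n\in R_k$.

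Fix $\varepsilon\in(0,\tfrac14)$ and pick trigonometric polynomials $P_\pm(\theta)=\sum_{|m|\le M}c_m^\pm e(m\theta)$ with $P_-\le\mathbbm{1}_{[1/4,3/4]}\le P_+$ on $\mathbb{R}/\mathbb{Z}$ and $c_0^\pm=\int_{\mathbb{R}/\mathbb{Z}}P_\pm\in(\tfrac12-\varepsilon,\tfrac12+\varepsilon)$. Since $W_n\ge 0$ and $f\ge 0$, integrating the pointwise estimates $P_-(n^k\alpha)W_n\le\mathbbm{1}_{R_k}(n)W_n\le P_+(n^k\alpha)W_n$ against $f$ and averaging gives, with $\Lambda_N^{(m)}:=\frac1N\sum_{n=1}^N e(mn^k\alpha)\langle W_n,f\rangle$,
\[
\sum_{|m|\le M}c_m^-\,\Lambda_N^{(m)}\ \le\ \frac1N\sum_{n=1}^N\mathbbm{1}_{R_k}(n)\langle W_n,f\rangle\ \le\ \sum_{|m|\le M}c_m^+\,\Lambda_N^{(m)}\qquad(N\in\mathbb{N}).
\]
The proof is thus reduced to: (a) $\liminf_{N\to\infty}\Lambda_N^{(0)}=\liminf_{N\to\infty}\frac1N\sum_{n=1}^N\mu\big(A\cap T^{-n}A\cap\bigcap_{i}S_i^{-n}A\big)>0$; and (b) $\Lambda_N^{(m)}\to 0$ as $N\to\infty$ for every $m\neq 0$, for which it suffices that $\frac1N\sum_{n=1}^N e(mn^k\alpha)W_n\to 0$ in $L^2(X,\mu)$. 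Granting (a) and (b), the displayed sandwich yields $\liminf_N\frac1N\sum_{n\le N}\mathbbm{1}_{R_k}(n)\langle W_n,f\rangle\ge c_0^-\liminf_N\Lambda_N^{(0)}>0$, as needed. Assertion (a) is Leibman's multiple recurrence theorem for measure preserving actions of a nilpotent group \cite{LeibmanNilpotentPolynomialSzemeredi}, applied to the group generated by $T,S_1,\dots,S_{k-1}$ (by Walsh's theorem \cite{WalshErgodicTheorem} the average in (a) even converges, to $\langle\Xi,f\rangle$ with $\Xi=\lim_N\frac1N\sum_n W_n$); note that only the nilpotence of $\{T,S_1,\dots,S_{k-1}\}$ is used here, not the spectral hypothesis on $T$.

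Assertion (b) is the heart of the matter, and it is here that Theorem \ref{MainResultsInIntro} is invoked. Fix $m\neq 0$ and set
\[
y_n:=T^nf\in L^\infty(X,\mu),\qquad x_n:=e(mn^k\alpha)\prod_{i=1}^{k-1}S_i^nf\in L^2(X,\mu),
\]
so that $\frac1N\sum_{n\le N}e(mn^k\alpha)W_n=\frac1N\sum_{n\le N}y_nx_n$. Since $\langle y_{n+h},y_n\rangle=\langle T^hf,f\rangle=\widehat{\mu_{f,T}}(h)$ and $T$ has singular spectrum, $\mu_{f,T}\perp\mathcal{L}$, so $(y_n)_{n=1}^\infty$ is a bounded spectrally singular sequence in $L^\infty(X,\mu)$; by the $L^\infty$-weighted conclusion of Theorem \ref{MainResultsInIntro}, (b) follows once we show $(x_n)_{n=1}^\infty$ is spectrally Lebesgue, and for that, by the first part of Theorem \ref{MainResultsInIntro}, it is enough to check that $\lim_N\frac1N\sum_{n=1}^N\langle x_{n+h},x_n\rangle=0$ for every $h\in\mathbb{N}$. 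Writing $c_h(n):=(n+h)^k-n^k$ and reordering a product of scalar-valued functions, one computes
\[
\langle x_{n+h},x_n\rangle=e\big(m\,c_h(n)\,\alpha\big)\,b_h(n),\qquad b_h(n):=\int_X\prod_{i=1}^{k-1}S_i^n\big((S_i^hf)\,f\big)\,d\mu.
\]
The sequence $b_h$ is the diagonal self-correlation of a $(k-1)$-fold ergodic average of the \emph{commuting} transformations $S_1,\dots,S_{k-1}$ with linear iterates; by the structure theory of such averages for commuting transformations (the characteristic factor is an inverse limit of nilsystems of step at most $k-2$; see \cite{KbutNotK+1} and the references therein) one has $b_h=\psi_h+e_h$, where $\psi_h$ is a uniform limit of nilsequences of degree at most $k-2$ and $\frac1N\sum_{n=1}^N|e_h(n)|\to 0$. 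On the other hand $c_h(n)=kh\,n^{k-1}+(\text{terms of degree}<k-1\text{ in }n)$ has nonzero integer leading coefficient, so $n\mapsto e(m\,c_h(n)\,\alpha)$ is a polynomial phase of degree $k-1$ with irrational leading coefficient $mkh\alpha$ (as $\alpha\notin\mathbb{Q}$ and $m,h\neq 0$). Such a phase has vanishing Cesàro average against every nilsequence of degree at most $k-2$: the two polynomial sequences jointly equidistribute on a product nilmanifold with the circle factor fully equidistributed and independent, since a nontrivial horizontal character vanishing along the orbit would force an integer multiple of $mkh\alpha$ to lie in $\mathbb{Z}$. Hence $\frac1N\sum_{n\le N}e(m\,c_h(n)\,\alpha)\psi_h(n)\to 0$, while the contribution of $e_h$ is at most $\frac1N\sum_{n\le N}|e_h(n)|\to 0$. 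Therefore $\lim_N\frac1N\sum_{n\le N}\langle x_{n+h},x_n\rangle=0$ for all $h$, $(x_n)_{n=1}^\infty$ is spectrally Lebesgue, and (b) follows.

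The main obstacle is precisely the verification that $(x_n)_{n=1}^\infty$ is spectrally Lebesgue, i.e.\ the orthogonality between the degree-$(k-1)$ phase and the correlation sequence $b_h$: one needs $b_h$ to be governed by a nilfactor of step \emph{strictly below} $k-1$, so that it cannot resonate with $e(m\,c_h(n)\,\alpha)$, and this is exactly where the arithmetic of $R_k$ enters — a $k$-th power tested against $k-1$ commuting transformations. The elimination of $T^nf$ from the correlation, by contrast, uses only the singular spectral type of $T$ through the new van der Corput theorem, and not any structural hypothesis such as zero entropy. When $k=2$ the sequence $b_h$ is constant in $n$ and the orthogonality reduces to Weyl's equidistribution theorem; for $k\ge 3$ one genuinely needs the Host--Kra structure of $b_h$. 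The remaining ingredients — the sandwiching, the appeal to Leibman's theorem, and the spectral bookkeeping for $T$ — are routine.
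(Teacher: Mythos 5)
Your top-level skeleton coincides with the paper's: sandwich $\mathbbm{1}_{[1/4,3/4]}(n^k\alpha)$ between trigonometric polynomials, treat the zeroth Fourier mode by Walsh's convergence theorem together with Leibman's nilpotent multiple recurrence theorem, and kill every mode $m\neq 0$ by pairing the spectrally singular sequence $(T^n\mathbbm{1}_A)_{n=1}^\infty$ against the twisted product $x_n=e(mn^k\alpha)\prod_iS_i^n\mathbbm{1}_A$ via Theorem \ref{MainResultsInIntro} (equivalently Lemma \ref{StrongMixingTimesRigidNormAveragesTo0}). The gap is in your verification that $(x_n)_{n=1}^\infty$ is spectrally Lebesgue. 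You reduce this to $\frac1N\sum_{n\le N} e(m\,c_h(n)\alpha)\,b_h(n)\to 0$ with $b_h(n)=\int_X\prod_i S_i^n\bigl((S_i^hf)f\bigr)\,d\mu$, and justify it by asserting that for commuting $S_1,\dots,S_{k-1}$ the correlation sequence $b_h$ splits as a nilsequence of degree at most $k-2$ plus a Ces\`aro-null error, because ``the characteristic factor is an inverse limit of nilsystems of step at most $k-2$.'' That assertion is not available: the Host--Kra structure theorem and the Bergelson--Host--Kra nilsequence decomposition of correlation sequences are theorems about a \emph{single} transformation; for genuinely commuting transformations the characteristic factors are not pro-nilsystems in general, no such decomposition appears in \cite{KbutNotK+1} or its references, and decompositions of commuting multicorrelation sequences into a nilsequence plus an error vanishing in density are in fact known to fail in general (Bri\"et--Green); the best available substitutes (Frantzikinakis--Host) give, for each $\epsilon>0$, a nilsequence of step equal to the number of transformations plus an error of density at most $\epsilon$, with no step bound of $k-2$ -- and your orthogonality argument against the degree-$(k-1)$ phase $e(m\,c_h(n)\alpha)$ needs exactly that step bound. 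Your route does work for $k=2$ (where $b_h$ is constant) and can be rescued for $k=3$ by the spectral theorem plus Wiener's lemma, but for $k\ge 4$ the structural input you invoke is unproven.

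The paper closes precisely this step with no structure theory, by induction on the number of transformations (Proposition \ref{PropCopiedFromFLW}): composing the integrand with $S_1^{-n}$, the correlation $\frac1N\sum_{n\le N}\langle x_{n+h},x_n\rangle$ becomes the average of $\int_X g_{1,h}\,\bigl(\prod_{i=2}^{k-1}\tilde S_i^{\,n} g_{i,h}\bigr)e^{2\pi i(p(n+h)-p(n))}\,d\mu$, where $g_{i,h}=S_i^hf_i\cdot\overline{f_i}$ and $\tilde S_i=S_iS_1^{-1}$, i.e.\ an inner product of the fixed function $g_{1,h}$ against a sequence of the same shape with one fewer transformation and a phase of degree $\ge k-1$ that still has irrational leading coefficient. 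The inductive hypothesis says that sequence is spectrally Lebesgue, so its Ces\`aro averages vanish in norm by Corollary \ref{NearlyStronglyMixingAveragesTo0}, and Theorem \ref{StrongMixingClassicalvdC} then upgrades the vanishing correlations to the spectrally Lebesgue property at the next level of the induction. If you replace your structure-theoretic claim with this induction (or with the classical van der Corput induction of \cite{KbutNotK+1}), the remainder of your write-up -- the sandwich, assertion (a), and the use of singularity of $\mu_{\mathbbm{1}_A,T}$ -- is correct and matches the paper.
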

It is worth noting that in \cite{KbutNotK+1}, for each $k \ge 2$ a uniquely ergodic system $(X_k,T_k)$ with unique invariant measure $\mu$ is constructed such that Equation \eqref{NonRecurrenceEquation} is satisfied for all $n \in R_k$ and some open set $A$. When $k = 2$, we may take $(X_2,T_2) = ([0,1]^2,T)$, where $T(x,y) = (x+\alpha,y+x)$ with $\alpha \in [0,1]\setminus\mathbb{Q}$. Since $T$ has zero topological (and hence measurable) entropy, we see that Theorem \ref{KButNotK+1GeneralizationMain} cannot be extended to the situation in which $T$ has zero (measurable) entropy. It is worth noting that the maximal spectral type of $T$ is $\mathcal{L}+\sum_{n \in \mathbb{Z}\setminus\{0\}}\delta_{n\alpha}$. The fact that we require $\{T,S_1,\cdots,S_{k-1}\}$ to generate a nilpotent group is an artifact of our method of proof, and it is not clear whether or not this assumption can be weakened.

Our next two results both involve the norm convergence of some multiple ergodic averages, but for the sake of simplicity we will only state here special cases of what we prove in Section \ref{SectionOnMeasurePreservingSystems}. In particular we only focus on polynomial functions here and defer further discussion of Hardy field functions to Section \ref{SectionOnMeasurePreservingSystems}. We recall that a polynomial $p \in \mathbb{Q}[x]$ is an \textit{integer polynomial} if for every integer $x$, $p(x)$ is also an integer.

\begin{theorem}[(cf. Theorems \ref{ApplicationForSingleTMain} and \ref{ApplicationWithHardyFunctions})]
\label{ApplicationForSingleT}
Let $(X,\mathscr{B},\mu)$ be a probability space and $T,S:X\rightarrow X$ be measure preserving automorphisms. Suppose that $T$ has singular spectrum and $S$ is totally ergodic. Let $p_1,\cdots,p_K \in \mathbb{Q}[x]$ be integer polynomials for which $\text{deg}(p_1) \ge 2$ and $\text{deg}(p_i) \ge 2+\text{deg}(p_{i-1})$. For any $f,g_1,\cdots,g_K \in L^\infty(X,\mu)$, we have 

\begin{equation}
    \lim_{N\rightarrow\infty}\frac{1}{N}\sum_{n = 1}^NT^nf\prod_{i = 1}^KS^{p_i(n)}g_i = \mathbb{E}[f|\mathcal{I}_T]\prod_{i = 1}^K\int_Xg_id\mu,
\end{equation}
with convergence taking place in $L^2(X,\mu)$.
\end{theorem}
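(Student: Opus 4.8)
The plan is to reduce the multiple average to a single van der Corput application governed by Theorem \ref{MainResultsInIntro}, using the spectral hypothesis on $T$ to extract a spectrally singular sequence and the total ergodicity of $S$ together with joint ergodicity results for polynomial iterates to control the $S$-part. First I would normalize: subtract $\mathbb{E}[f|\mathcal{I}_T]$ from $f$ and reduce to the case $\mathbb{E}[f|\mathcal{I}_T]=0$, in which case the claimed limit is $0$; simultaneously I would peel off the constants $\int g_i\,d\mu$ by writing each $g_i = (g_i-\int g_i) + \int g_i$ and expanding, so that it suffices to show the average tends to $0$ whenever $\mathbb{E}[f|\mathcal{I}_T]=0$ \emph{and} at least one $g_i$ has zero integral. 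Set $x_n = T^nf\cdot\prod_{i=1}^K S^{p_i(n)}g_i \in L^2(X,\mu)$; this is a bounded sequence in $\mathcal{H}=L^2(X,\mu)$, so by Theorem \ref{MainResultsInIntro} it is enough to prove that for every fixed $h\in\mathbb{N}$,
\begin{equation}
  \lim_{N\to\infty}\frac1N\sum_{n=1}^N\langle x_{n+h},x_n\rangle = 0.
\end{equation}

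The second step is to compute this correlation. Expanding the inner product and using that $T$ is measure preserving,
\begin{equation}
  \langle x_{n+h},x_n\rangle = \int_X T^h f\cdot \bar f\cdot \prod_{i=1}^K \big(T^{-n}S^{p_i(n+h)}g_i\big)\cdot \overline{\prod_{i=1}^K T^{-n}S^{p_i(n)}g_i}\;d\mu,
\end{equation}
so that $\frac1N\sum_{n=1}^N\langle x_{n+h},x_n\rangle = \frac1N\sum_{n=1}^N \langle U^n G_{n,h},\; T^hf\cdot\bar f\rangle$ where $U$ is an appropriate product unitary and $G_{n,h}$ is a product of $S$-iterates of the $g_i$ and $\bar g_i$ with polynomial exponents $p_i(n+h)-\,$(something) and $-p_i(n)$. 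The key point: the degree gaps $\deg p_i \ge 2 + \deg p_{i-1}$ and $\deg p_1 \ge 2$ guarantee that all the polynomial exponents appearing — both the $p_i(n)$ and the differences $p_i(n+h)-p_i(n)$, which have degree $\deg p_i - 1 \ge 1$ — together with the linear exponent $-n$ coming from the $T^{-n}$ factors form a family that is \emph{jointly ergodic} (in the sense needed), because $S$ is totally ergodic and the degrees are distinct and pairwise well-separated. One then invokes the polynomial joint ergodicity / PET machinery (the results cited later in the paper as Theorems \ref{JointErgodicityForHardy}, \ref{WeakMixingHardyPET}, specialized to polynomials and total ergodicity) to conclude that, since at least one $g_i$ has zero integral, the $L^2$-limit $\lim_{N}\frac1N\sum_{n=1}^N G_{n,h}$ is $0$, hence $\frac1N\sum_{n=1}^N\langle x_{n+h},x_n\rangle\to 0$.

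The third step assembles the pieces: with the per-$h$ correlations vanishing, Theorem \ref{MainResultsInIntro} (or rather the underlying spectral machinery) tells us $(x_n)$ is a spectrally Lebesgue sequence; but here I actually want the slightly different conclusion that $\frac1N\sum x_n\to 0$, which follows directly from Theorem \ref{ClassicalvanderCorput'sDifferenceTheorems}(i) applied to $(x_n)$ — so in fact the singular-spectrum hypothesis on $T$ is what makes the \emph{reduction} legitimate rather than appearing in this last step. Let me re-route: the correct use of the singular spectrum of $T$ is that $f$ (with $\mathbb{E}[f|\mathcal{I}_T]=0$) generates, via $(\langle T^n f,f\rangle)$, a spectrally singular sequence $c_n := \langle T^n f, f\rangle$-type coefficients, and one conditions first on $S$: writing the average as $\frac1N\sum_n T^nf\cdot w_n$ with $w_n=\prod_i S^{p_i(n)}g_i$, one shows $(w_n)$ is a bounded sequence in $L^\infty$ whose relevant spectral behavior (driven by the polynomial iterates of the totally ergodic $S$) is spectrally singular, then applies the third displayed conclusion of Theorem \ref{MainResultsInIntro} with $\mathcal{H}=L^2(X,\mu)$, $x_n \leftarrow T^n f$, $y_n \leftarrow w_n$ — provided one first checks $(T^nf)$ satisfies the hypothesis $\frac1N\sum\langle T^{n+h}f,T^nf\rangle = \widehat{\mu_{f,T}}(h)\cdot(\text{no, this is constant in }n)$.

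So the genuinely clean route is the first one: apply Theorem \ref{MainResultsInIntro}/\ref{ClassicalvanderCorput'sDifferenceTheorems}(i) to $x_n = T^nf\prod_i S^{p_i(n)}g_i$ directly, reducing to the correlation estimate, and there use the spectral hypothesis on $T$ as follows — after the computation above, the $T$-dependent factor contributes $\langle T^h f,f\rangle$ (a constant) times an $S$-polynomial-ergodic average, and the vanishing of that average (when some $\int g_i = 0$) is exactly polynomial joint ergodicity for the totally ergodic system $S$. The hypothesis that $T$ has singular spectrum, via \cite{SingularSystemsHaveZeroEntropy}, is in fact used one level up, to justify that $\mathbb{E}[f|\mathcal{I}_T]$ is the only surviving piece: more precisely it is used to handle the interaction term where $f$ is nonconstant but all $g_i$ are constant, i.e.\ to see $\frac1N\sum_n T^nf \to \mathbb{E}[f|\mathcal{I}_T]$ strengthened by the non-commuting $S$-twists — here one needs that $f$'s spectral measure off the eigenvalue $1$ is singular so that it decorrelates from the quasi-polynomial phases generated by $S$. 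I expect the main obstacle to be precisely this last point: cleanly separating the "$f$-side" and "$S$-side" spectral data when $T$ and $S$ do not commute, so that the singular spectrum of $T$ and the polynomial-genericity of $S$'s iterates can be played against each other. The cited general theorems (\ref{StrongMixingClassicalvdC}, \ref{JointErgodicityForHardy}, \ref{WeakMixingHardyPET}) and the machinery of Section \ref{SectionOnMeasurePreservingSystems} should make this routine, so the write-up should mainly consist of verifying the degree-gap condition feeds correctly into PET and bookkeeping the reduction.
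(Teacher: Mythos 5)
There is a genuine gap, and it sits exactly where you predicted the ``main obstacle'' would be. The route you finally endorse --- applying van der Corput directly to $x_n = T^nf\prod_{i=1}^K S^{p_i(n)}g_i$ and claiming that the correlation $\frac1N\sum_n\langle x_{n+h},x_n\rangle$ splits as $\langle T^hf,f\rangle$ times a pure $S$-polynomial average --- does not work. After the change of variables by $T^{-n}$ the integrand contains the composite functions $T^{-n}S^{p_i(n+h)}g_i$ and $T^{-n}S^{p_i(n)}\overline{g_i}$, and since $T$ and $S$ do not commute these are not $S$-iterates of anything; no joint-ergodicity/PET theorem applies to them and no factorization of the integral into a $T$-part and an $S$-part is available. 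Worse, if your claimed computation were valid it would prove the theorem with no hypothesis on $T$ at all (the singular spectrum never enters your correlation estimate), and it would force $\frac1N\sum_n x_n\to 0$ even when all $g_i\equiv 1$, where the true limit is $\mathbb{E}[f|\mathcal{I}_T]$; the paper's explicit example after Theorem \ref{ApplicationForT1T2} and the constructions in Theorems \ref{NoSolvableRecurrence}--\ref{DoubleBernoulliIsNutz} show that some spectral/structural hypothesis on $T$ is indispensable, so any argument that never uses $\mu_{f,T}\perp\mathcal{L}$ cannot be correct. Your appeal to \cite{SingularSystemsHaveZeroEntropy} (zero entropy) is also a red herring: entropy plays no role in the proof.

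The correct division of labor --- which your middle ``re-route'' brushes against but with the roles reversed --- is the paper's: van der Corput (in the form of Theorem \ref{StrongMixingClassicalvdC}) is applied only to the pure $S$-sequence $w_n=\prod_{i=1}^K S^{p_i(n)}g_i$ with some $\int_X g_i\,d\mu=0$. Its $h$-correlations are genuine $S$-only multiple averages; composing with $S^{-p_1(n)}$ one gets an average along the family $\{p_i(n+h)-p_1(n)\}_{i=1}^K\cup\{p_i(n)-p_1(n)\}_{i=2}^K$, which the degree-gap hypothesis makes independent, so Theorem \ref{PolynomialAveragesConvergeForSingleT} (Frantzikinakis--Kra, total ergodicity of $S$) gives limit $\prod$(integrals)$\,=0$; hence $(w_n)$ is spectrally Lebesgue. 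The hypothesis on $T$ enters on the other side: by Lemma \ref{CreatingWeaklyRigidSequences}, $(T^nf)_{n\ge1}$ is spectrally \emph{singular}, and the pairing Lemma \ref{StrongMixingTimesRigidNormAveragesTo0} (spectrally Lebesgue times spectrally singular averages to $0$ in $L^2$-norm) finishes the mean-zero terms, the all-constant term being the mean ergodic theorem. In your write-up you instead call $(w_n)$ ``spectrally singular'' and try to put $(T^nf)$ into the van der Corput hypothesis slot, correctly observe that $\frac1N\sum_n\langle T^{n+h}f,T^nf\rangle=\langle T^hf,f\rangle$ does not vanish, and then abandon this branch rather than swapping the roles; so the one lemma that actually exploits $\mu_{f,T}\perp\mathcal{L}$ against the non-commuting $S$-data is never invoked, and the proof as proposed does not close.
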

The assumptions made on $p_1,\cdots,p_K$ are an artifact of our method of proof, and it is natural to ask if they can be weakened. Our next result answers this question in the positive, and gives even more, but at the expense of assuming $S$ to be weakly mixing instead of just totally ergodic. We recall that integer polynomials $p$ and $q$ are \textit{essentially distinct} if $p-q$ is not constant.

\begin{theorem}[(cf. Theorems \ref{ApplicationForT1T2Main} and \ref{ApplicationForT1T2Hardy})]
\label{ApplicationForT1T2}
Let $(X,\mathscr{B},\mu)$ be a probability space and $T,R,S:X\rightarrow X$ be measure preserving automorphisms. Suppose that $T$ has singular spectrum, $R$ and $S$ commute, and $S$ is weakly mixing. Let $\ell \in \mathbb{N}$ and let $p_1,\cdots,p_\ell \in \mathbb{Q}[x]$ be pairwise essentially distinct integer polynomials, each having degree at least 2. For any $f,h,g_1,\cdots,g_\ell \in L^\infty(X,\mu)$ satisfying $\int_Xg_jd\mu = 0$ for some $1 \le j \le \ell$, we have

\begin{equation}
    \lim_{N\rightarrow\infty}\frac{1}{N}\sum_{n = 1}^NT^nfR^nh\prod_{j = 1}^{\ell}S^{p_j(n)}g_j = 0,
\end{equation}
with convergence taking place in $L^2(X,\mu)$.
\end{theorem}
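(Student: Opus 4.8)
The plan is to apply Theorem~\ref{MainResultsInIntro}. Set
\[
x_n \;=\; R^n h\cdot\prod_{j=1}^{\ell}S^{p_j(n)}g_j,\qquad y_n \;=\; T^n f,
\]
so that $(x_n)_{n=1}^\infty\subseteq L^2(X,\mu)$ and $(y_n)_{n=1}^\infty\subseteq L^\infty(X,\mu)$ are bounded and the average whose limit we must compute is exactly $\frac{1}{N}\sum_{n=1}^{N}y_n x_n$, with products taken pointwise in $L^2(X,\mu)$. Since $T$ has singular spectrum, $(T^n f)_{n=1}^\infty$ is spectrally singular; hence, by the last statement in Theorem~\ref{MainResultsInIntro} (with $\mathcal{H}=L^2(X,\mu)$), it suffices to verify the hypothesis of that theorem for $(x_n)$, namely
\begin{equation}\label{eq:vdcT1T2target}
\lim_{N\to\infty}\frac{1}{N}\sum_{n=1}^{N}\langle x_{n+m},x_n\rangle \;=\; 0\qquad\text{for every }m\in\mathbb{N}.
\end{equation}
Before addressing \eqref{eq:vdcT1T2target}, I would reduce to the case in which every $g_j$ has mean zero: writing $g_j=\big(g_j-\int_X g_j\,d\mu\big)+\int_X g_j\,d\mu$ for $j\neq j_0$ and using the multilinearity of $(g_1,\dots,g_\ell)\mapsto\prod_j S^{p_j(n)}g_j$ together with $\int_X g_{j_0}\,d\mu=0$, the average $\frac{1}{N}\sum_n T^n f R^n h\prod_j S^{p_j(n)}g_j$ becomes a finite linear combination of averages of the same shape in which $\{p_1,\dots,p_\ell\}$ is replaced by a sub-family containing $p_{j_0}$, every function appearing has mean zero, and the sub-family is still pairwise essentially distinct with every member of degree at least $2$. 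So from now on I assume $\int_X g_j\,d\mu=0$ for all $j$.

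Fix $m\in\mathbb{N}$. Using that $R$ and $S$ commute and that each transformation acts isometrically on $L^2(X,\mu)$, a direct expansion gives
\begin{equation}\label{eq:vdcT1T2expand}
\langle x_{n+m},x_n\rangle \;=\; \int_X R^n\psi_m\cdot\prod_{j=1}^{\ell}\big(S^{p_j(n+m)}g_j\big)\big(S^{p_j(n)}\overline{g_j}\big)\,d\mu,\qquad\text{where }\ \psi_m:=\big(R^m h\big)\overline{h}\in L^\infty(X,\mu).
\end{equation}
I would then group the $2\ell$ polynomials $p_1(n+m),\dots,p_\ell(n+m),p_1(n),\dots,p_\ell(n)$ according to the relation of differing by a constant, pick a representative $q_i(n)$ in each class, and absorb the resulting integer offsets into the functions, rewriting \eqref{eq:vdcT1T2expand} as
\begin{equation}\label{eq:vdcT1T2grouped}
\langle x_{n+m},x_n\rangle \;=\; \int_X R^n\psi_m\cdot\prod_i S^{q_i(n)}F_i\,d\mu,
\end{equation}
where the $q_i$ are pairwise essentially distinct, each of degree at least $2$, and each $F_i$ is a product of functions of the form $S^c g_j$ or $S^c\overline{g_j}$. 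The combinatorial point I would establish is that \emph{at least one $F_i$ has zero integral}. Since the $p_j$ are pairwise essentially distinct, no class contains two of the exponents $p_j(n+m)$ and no class contains two of the exponents $p_j(n)$, so each $F_i$ is a single $S^c g_j$, a single $S^c\overline{g_j}$, or a product $\big(S^c g_j\big)\big(S^{c'}\overline{g_{j'}}\big)$; if all $F_i$ were of the last type, the correspondence $g_j\mapsto\overline{g_{j'}}$ would be a permutation $\sigma$ of $\{1,\dots,\ell\}$ with $p_j(t+m)-p_{\sigma(j)}(t)$ constant for all $j$, and composing these identities around a cycle of $\sigma$ of length $r$ would force $p_j(t)-p_j(t-rm)$ to be constant for the corresponding $j$, which is impossible because $\deg p_j\ge 2$ and $rm\neq 0$. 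Hence some $F_i$ equals $S^c g_j$ or $S^c\overline{g_j}$ for a single $j$, and since $S$ preserves integrals and $\int_X g_j\,d\mu=0$, we get $\int_X F_i\,d\mu=0$.

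To finish, I would invoke the weak mixing PET theorem (Theorem~\ref{WeakMixingHardyPET}) to evaluate the Ces\`{a}ro average over $n$ of the right-hand side of \eqref{eq:vdcT1T2grouped}: $S$ is weakly mixing, $R$ commutes with $S$, the exponents $q_i$ are pairwise essentially distinct of degree at least $2$ — in particular of degree strictly larger than the degree of the exponent $n$ attached to $R$ — and at least one $F_i$ has zero integral, so this average vanishes; this is precisely \eqref{eq:vdcT1T2target}, and the theorem follows. The step I expect to be the main obstacle is this last one: one needs the weak mixing PET theorem in a form that allows the additional commuting transformation $R$ carrying a polynomial exponent of lower degree and no mixing hypothesis, and one must confirm that the grouped family in \eqref{eq:vdcT1T2grouped} meets its hypotheses; the bookkeeping of the equivalence classes and the cycle argument above, while elementary, also need to be executed carefully. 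In the Hardy-field generality of Theorem~\ref{ApplicationForT1T2Hardy} the same scheme applies, with the polynomial identities replaced by the appropriate comparison estimates for Hardy functions.
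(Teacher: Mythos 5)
Your reduction to mean--zero $g_j$, the grouping of the $2\ell$ exponents into constant-difference classes, and the cycle argument showing that some class must be a singleton are all correct; in fact that combinatorial step handles the exceptional shifts $m$ which the paper never has to confront, since it works with Theorem \ref{StrongMixingClassicalvdC}, whose hypothesis tolerates finitely many (indeed, square-summably many) non-vanishing correlations, rather than with the every-shift condition of Theorem \ref{MainResultsInIntro}. The genuine gap is your final step. The average you must show vanishes,
\begin{equation*}
\lim_{N\to\infty}\frac{1}{N}\sum_{n=1}^{N}\int_X R^n\psi_m\prod_i S^{q_i(n)}F_i\,d\mu,
\end{equation*}
involves two transformations: the weakly mixing $S$ with polynomial iterates of degree at least $2$, and the commuting but otherwise arbitrary (not even ergodic) $R$ with linear iterate carrying the weight $\psi_m$. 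Neither Bergelson's weakly mixing PET \cite{WMPET} nor Theorem \ref{WeakMixingHardyPET} (Tsinas) applies: both concern a single weakly mixing transformation, and the latter moreover concerns Hardy-field iterates. So the statement you invoke is not a citation but is essentially the content of the theorem itself, as you partly acknowledge by flagging it as "the main obstacle."

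The statement you need is true, but proving it is where the work lies, and the paper's proof of Theorem \ref{ApplicationForT1T2Main} shows how: compose with $R^{-n}$ so the integrand becomes $\psi_m\cdot\prod_i S^{q_i(n)}(R^{-1})^nF_i$, and then it suffices to prove that $w_n:=\prod_i S^{q_i(n)}(R^{-1})^nF_i$ has Ces\`aro averages tending to $0$ (weakly is enough). This is done by a second pass through the van der Corput mechanism: the correlation $\frac{1}{N}\sum_n\langle w_{n+h},w_n\rangle$, after composing with $R^{n}S^{-q_1(n)}$, involves only powers of $S$ with exponents that are pairwise essentially distinct for all but finitely many $h$, so Bergelson's single-transformation weakly mixing PET yields $\prod_i\left|\int_XF_i\,d\mu\right|^2=0$; Theorem \ref{StrongMixingClassicalvdC} (which absorbs the finitely many exceptional $h$) then makes $(w_n)_{n=1}^\infty$ spectrally Lebesgue, and Corollary \ref{NearlyStronglyMixingAveragesTo0} gives the required convergence to $0$. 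This is exactly the base case ($L_1=0$, $W=R^{-1}$) of the induction the paper runs, so your outline defers the core of the argument to a black box that the cited results do not provide; with that extra vdC layer inserted, your route closes and is essentially the paper's proof with slightly different bookkeeping of the exceptional shifts.
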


We will now give an example to show that the assumptions in Theorems \ref{ApplicationForSingleT} and \ref{ApplicationForT1T2} cannot be weakened significantly. Consider the probability space $([0,1]^2,\mathscr{B},m^2)$ and the measure preserving automorphisms $T(x,y) = (x,y+x)$ for some $\alpha \in \mathbb{R}\setminus\mathbb{Q}$, and $S(x,y) = (x+2\alpha,y+x)$. We see that $T$ and $S$ are both zero entropy automorphisms that are not weakly mixing, and the latter is totally ergodic. Furthermore, we see that $S$ and $T$ generate a 2-step nilpotent group. For $f(x,y) = e^{2\pi i(x-y)}, h(x,y) = e^{2\pi iy},$ and $g(x,y) = e^{-2\pi ix}$, we see that

\begin{align}
    & \lim_{N\rightarrow\infty}\frac{1}{N}\sum_{n = 1}^NT^nf(x,y)S^nh(x,y)S^{\frac{1}{2}(n^2-n)}g(x,y)\\
    =& \lim_{N\rightarrow\infty}\frac{1}{N}\sum_{n = 1}^Ne^{2\pi i \left((1-n)x-y+y+nx+(n^2-n)\alpha-x-(n^2-n)\alpha\right)} = 1 \neq 0.
\end{align}
It is once again worth observing that the maximal spectral type of $T$ is $\mathcal{L}+\delta_0$. Nonetheless, we still state the following conjecture that will be discussed in more detail in Remark \ref{ZeroEntropyConjectureExplanation} after the reader is familiar with the proof of Theorem \ref{ApplicationForT1T2Main}.

\begin{conjecture}\label{ZeroEntropyConjecture}
    Theorem \ref{ApplicationForT1T2} is still true when $T$ has zero entropy.
\end{conjecture}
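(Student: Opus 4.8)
The plan is to pinpoint the one place where the proof of Theorem~\ref{ApplicationForT1T2} (that is, of Theorem~\ref{ApplicationForT1T2Main}) really uses singular spectrum and to replace only that ingredient. In that proof one first shows, using \emph{only} the commuting pair $(R,S)$, the weak mixing of $S$, the pairwise essential distinctness of the $p_j$ and the fact that $\deg p_j\ge 2$, together with the weak-mixing $\mathrm{PET}$ machinery behind Theorem~\ref{WeakMixingHardyPET}, that $x_n:=R^nh\prod_{j=1}^{\ell}S^{p_j(n)}g_j$ is a spectrally Lebesgue sequence; the hypothesis $\int_X g_j\,d\mu=0$ for some $j$ is precisely what makes the relevant limits vanish. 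One then applies Theorem~\ref{MainResultsInIntro}: since $T$ has singular spectrum, $(T^nf)$ is spectrally singular, so $\frac1N\sum_{n=1}^{N}T^nf\cdot x_n\to 0$ in $L^2(X,\mu)$. The first step never mentions $T$ and is unaffected, so the whole problem reduces to proving that $\frac1N\sum_{n=1}^{N}T^nf\cdot x_n\to 0$ in $L^2(X,\mu)$ whenever $T$ has zero entropy.

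One cannot expect the naive statement that a zero-entropy $T$ annihilates every spectrally Lebesgue sequence: the time-one map $T$ of the horocycle flow has zero entropy and Lebesgue spectrum, so there is a mean-zero $f$ with $\mu_{f,T}=\|f\|^2\mathcal L$; then $c_n:=\overline{T^nf}$ satisfies $\langle c_{n+h},c_n\rangle=\overline{\widehat{\mu_{f,T}}(h)}=0$ for every $h\ge 1$, so $(c_n)$ is spectrally Lebesgue, yet $\frac1N\sum_{n=1}^{N}T^nf\cdot c_n=\frac1N\sum_{n=1}^{N}T^n(|f|^2)\to\int_X|f|^2\,d\mu\neq 0$. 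So the algebraic form of $x_n$ has to be used, and the natural route is to run the Frantzikinakis--Host method behind Theorem~\ref{QuestionOfNF} with the weight $R^nh\prod_j S^{p_j(n)}g_j$ in place of $S^{p(n)}g$. Concretely: apply the classical van der Corput inequality (Theorem~\ref{ClassicalvanderCorput'sDifferenceTheorems}(iii)) to $\frac1N\sum_n T^nf\cdot x_n$, reducing matters to the study of $\frac1N\sum_n\int_X T^nF_h\cdot R^nG_h\cdot\prod_j\bigl(S^{p_j(n+h)}g_j\cdot\overline{S^{p_j(n)}g_j}\bigr)\,d\mu$, where $T$ and $R$ now occur with degree-one exponents while $S$ still carries pairwise essentially distinct polynomials of degree $\ge 2$; then iterate, using the commutation of $R$ with $S$ and the weak mixing of $S$ to perform a $\mathrm{PET}$ reduction that eliminates the $T$- and $R$-contributions (replacing them by suitable projections), at which stage the degree-$\ge 2$ polynomials together with the weak mixing of $S$ and the surviving mean-zero factor force the residual $S$-average to vanish. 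The zero-entropy hypothesis is what gets invoked, in place of a spectral computation, when the $T$-contribution is eliminated.

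The hard part will be exactly this joint reduction. Since $T$ is not assumed to commute with $R$ or $S$, there is no structure theory for the group $\langle T,R,S\rangle$ and one cannot simply run a single $\mathrm{PET}$ induction in three commuting variables; one must splice a ``$T$-side'' argument (determinism coming from zero entropy, in the spirit of Frantzikinakis--Host) with an ``$S$-side'' argument (equidistribution coming from the weak mixing of $S$). Two points look genuinely delicate: propagating the distinguished mean-zero factor through the van der Corput/$\mathrm{PET}$ tower so that an honest cancellation survives at the bottom — the arbitrary weight $R^nh$ and the other $g_j$'s make this messier than in the single-iterate case of Theorem~\ref{QuestionOfNF} — and confirming that zero entropy, and not the stronger singular-spectrum assumption, is all that the $T$-step actually needs. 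The horocycle example above is a warning that absolute continuity of the spectral type of $(x_n)$ is by itself insufficient, so the nonlinearity of the $p_j$ and the weak mixing of $S$ have to be used in an essential way, and not merely to certify that $(x_n)$ is spectrally Lebesgue.
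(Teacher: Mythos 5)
The statement you are addressing is not a theorem of the paper but an open conjecture: the paper offers no proof, only the heuristic motivation in Remark \ref{ZeroEntropyConjectureExplanation} (namely that the weak mixing PET kills so many correlations of the weight in Equation \eqref{ProbablyAKMixingSequence} that one expects more than singular spectrum of $T$ to be dispensable). Your proposal does not close this gap. Its first paragraph is an accurate account of the paper's actual proof of Theorem \ref{ApplicationForT1T2Main} and correctly isolates the single place where $\mu_{f,T}\perp\mathcal{L}$ is used, and your horocycle-flow example (essentially the same one the author gives in Remark \ref{RemarkAboutMyQuestions}) correctly shows that no purely spectral statement of the form ``zero entropy annihilates spectrally Lebesgue sequences'' can be true. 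But from that point on the argument is only a plan: the claim that one can run a van der Corput/PET reduction that ``eliminates the $T$- and $R$-contributions (replacing them by suitable projections)'' with zero entropy ``invoked in place of a spectral computation'' is precisely the unresolved content of the conjecture, and you say nothing about how zero entropy would actually enter. The Frantzikinakis--Host proof of Theorem \ref{QuestionOfNF} is not a formal vdC/PET manipulation; it rests on a structural analysis tailored to the single weight $S^{p(n)}g$, and there is no indication in your sketch of how that analysis survives the extra commuting weight $R^nh$, several essentially distinct polynomials $p_j$ of degree at least $2$, and the total absence of commutation between $T$ and $\langle R,S\rangle$. After one application of Theorem \ref{ClassicalvanderCorput'sDifferenceTheorems}(iii) the factor $T^nF_h$ still sits inside the integral and cannot be absorbed into any PET scheme involving $R$ and $S$, so ``iterate'' is not an available move; some genuinely new input (a joinings or structure-theoretic argument in the spirit of \cite{MultipleRecurrenceAndConvergenceWithoutCommutativity}) would be needed exactly there.

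You acknowledge this yourself (``the hard part will be exactly this joint reduction''), which is the honest assessment: what you have written is a sensible research outline identifying the right obstacles — it matches the author's own diagnosis of why the method of the paper stops at singular spectrum — but it is not a proof, and the two delicate points you flag (propagating the mean-zero factor through the tower, and locating where zero entropy rather than singular spectrum suffices) are the conjecture, not steps whose verification is routine.
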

\begin{remark}\label{RemarkOnSingularSystems}
    In \cite[Proposition 2.9]{RigidityAndNonRecurrence} it is shown that if $(X,\mathscr{B},\mu)$ is a standard probability space, and $\text{Aut}(X,\mathscr{B},\mu)$ is endowed with the strong operator topology, then the set of transformations that are weakly mixing and rigid is a generic set. Since any rigid automorphism has singular spectrum, we see that the set of singular automorphisms is generic. Now let $\mathcal{S} \subseteq \text{Aut}(X,\mathscr{B},\mu)$ denote the collection of strongly mixing transformation, and note that $\mathcal{S}$ is a meager set since an automorphism cannot simultaneously be rigid and strongly mixing. Since $\mathcal{S}$ is not a complete metric space with respect to the topology induced by the strong operator topology, a new topology was introduced in \cite{MetricOnMixing}, with respect to which $\mathcal{S}$ is a complete metric space. It is shown in the Corollary to Theorem 7 of \cite{MetricOnMixing} that a generic $T \in \mathcal{S}$ has singular spectrum, and such a $T$ is mixing of all orders due a well known result of Host \cite{SingularMixingIsMixingOfAllOrders}. See \cite{MixingWithSingularSpectrumFromAFlow} and \cite{CuttingAndStackingMixingAndSingularSpectrum} for concrete examples of $T \in \mathcal{S}$ that have singular spectrum, see \cite{MoreRankOneSingularSpectrum} for more recent developements regarding rank 1 transformations with singular spectrum, see \cite{MildMixingSpectrallDisjointFromStrongMixing} for examples of mildly (but not strongly) mixing systems that have singular spectrum, and see \cite{SubstitutionsWithSingularSpectrum} for examples of substitution systems and interval exchange transformations that have singular spectrum. 
\end{remark}

\begin{remark}\label{EndOfIntroRemark}
Firstly, we remark that we will prove results slightly more general than Theorems \ref{PartiallyAnswersNFIntro}, \ref{KButNotK+1GeneralizationIntro}, \ref{ApplicationForSingleT}, and \ref{ApplicationForT1T2} by showing that we do not need $T$ to have singular spectrum, but only that the $f \in L^2(X,\mu)$ with which we are working satisfies $\mu_{f,T}\perp\mathcal{L}$. Similarly, we will see that in Theorem \ref{PartiallyAnswersNFIntro}(iii) we only need to assume that the spectral measure $\mu_{g,S}$ does not have any rational atoms instead of total ergodicity of $S$. However, it is not clear whether or not Theorems \ref{ApplicationForSingleT} and \ref{ApplicationForT1T2} can be localized in a similar fashion by only requiring the actions of $S$ on $g_i$ to be sufficiently mixing instead of requiring the underlying transformation $S$ to be sufficiently mixing. Secondly, we average over the F\o lner sequence $\{[1,N]\}_{N = 1}^\infty$ solely for the sake of convenience, and our results still hold with the same method of proof when averaging over any other F\o lner sequence $\{[a_n,b_n]\}_{n = 1}^\infty$.\footnote{However, in the case of Theorem \ref{PartiallyAnswersNFIntro} we would have to strengthen the assumption of uniform distribution of $((k_{n+h}-k_n)\alpha)_{n = 1}^\infty$ to the assumption of well distribution.} This means that the main results also hold when taking uniform Ces\`aro averages instead of ordinary Ces\`aro averages.
\end{remark}

\textbf{Acknowledgements:} I would like to thank Srivatsa Srinivas for helpful discussions regarding Fourier analysis that lead to significant improvements in this paper. I would also like to thank the referees for their careful reading of the paper as well as their useful comments which lead to further improvements. I also acknowledge being supported by grant
2019/34/E/ST1/00082 for the project “Set theoretic methods in dynamics and number theory,” NCN (The
National Science Centre of Poland).

\section{Van der Corput's difference theorem and Lebesgue spectrum}\label{HilbertianvdCSection}
\vskip 5mm
Let $\mathcal{H}$ be a Hilbert space. Our desired result, Theorem \ref{StrongMixingClassicalvdC}, can naturally be proven by working with ultrapowers of $\mathcal{H}$, but doing so would require the reader to be familiar ultrafilters. Consequently, in the first half of this section we will discuss how to construct a Hilbert space $\mathscr{H}$ out of sequences of vectors coming from $\mathcal{H}$, which is similar to an ultrapower of $\mathcal{H}$, but can be constructed using elementary arguments. The ideas used in our construction are already present in \cite[Section 3]{TheErdosSumsetPaper}.

Let $||\cdot||$ and $\langle\cdot,\cdot\rangle$ denote the norm and inner product on $\mathcal{H}$ and let $||\cdot||_{\mathscr{H}}$ and $\langle\cdot,\cdot\rangle_{\mathscr{H}}$ denote the norm and inner product on $\mathscr{H}$. We denote the collection of square averageable sequences by
 
 \begin{equation}
     SA(\mathcal{H}) := \left\{(f_n)_{n = 1}^{\infty} \subseteq \mathcal{H}\ |\ \limsup_{N\rightarrow\infty}\frac{1}{N}\sum_{n = 1}^N||f_n||^2 < \infty\right\}.
 \end{equation}
Let $(f_n)_{n = 1}^{\infty}, (g_n)_{n = 1}^{\infty} \in SA(\mathcal{H})$ and observe that

\begin{alignat*}{2}
    &\limsup_{N\rightarrow\infty}\frac{1}{N}\left|\sum_{n = 1}^N\langle f_n, g_n\rangle\right| \le \limsup_{N\rightarrow\infty}\frac{1}{N}\sum_{n = 1}^N||f_n||\cdot||g_n||\numberthis\\
    \le & \left(\limsup_{N\rightarrow\infty}\frac{1}{N}\sum_{n = 1}^N||f_n||^2\right)^{\frac{1}{2}}\left(\limsup_{N\rightarrow\infty}\frac{1}{N}\sum_{n = 1}^N||g_n||^2\right)^{\frac{1}{2}} < \infty.
\end{alignat*}
It follows that we may use diagonalization to construct an increasing sequence of positive integers $(N_q)_{q = 1}^{\infty}$ for which

\begin{equation}
\lim_{q\rightarrow\infty}\frac{1}{N_q}\sum_{n = 1}^{N_q}\langle x_{n+h},y_n\rangle
\end{equation}
exists whenever $(x_n)_{n = 1}^{\infty},(y_n)_{n = 1}^{\infty} \in \left\{(f_n)_{n = 1}^{\infty},(g_n)_{n = 1}^{\infty}\right\}$ and $h \in \mathbb{N}\cup\{0\}$. We now construct a new Hilbert space $\mathscr{H} = \mathscr{H}\left((f_n)_{n = 1}^{\infty},(g_n)_{n = 1}^{\infty},(N_q)_{q = 1}^{\infty}\right)$ from $(f_n)_{n = 1}^{\infty}, (g_n)_{n = 1}^{\infty}$ and $(N_q)_{q = 1}^{\infty}$ as follows. For all $(x_n)_{n = 1}^{\infty},(y_n)_{n = 1}^{\infty} \in \left\{(f_n)_{n = 1}^{\infty},(g_n)_{n = 1}^{\infty}\right\}$ and $h \in \mathbb{N}\cup\{0\}$, we define

\begin{equation}
\left\langle (x_{n+h})_{n = 1}^{\infty},(y_n)_{n = 1}^{\infty}\right\rangle_{\mathscr{H}} := \lim_{q\rightarrow\infty}\frac{1}{N_q}\sum_{n = 1}^{N_q}\langle x_{n+h},y_n\rangle,
\end{equation}
so $\langle\cdot,\cdot\rangle_{\mathscr{H}}$ is a sesquilinear form on $\mathscr{H}' := \text{Span}_{\mathbb{C}}\left(\left\{(f_{n+h})_{n = 1}^{\infty}\right\}_{h = 0}^{\infty}\cup\left\{(g_{n+h})_{n = 1}^{\infty}\right\}_{h = 0}^{\infty}\right)$ with scalar multiplication and addition occuring pointwise. Letting 

\begin{alignat*}{2}
    \mathscr{H}'' := \Biggl\{(e_n)_{n = 1}^{\infty} \in SA(\mathcal{H})\ |\ &\forall\ \epsilon > 0\ \exists\ (h_n(\epsilon))_{n = 1}^{\infty} \in \mathscr{H}'\text{ s.t.}\numberthis\\ &\limsup_{q\rightarrow\infty}\frac{1}{N_q}\sum_{n = 1}^{N_q}||e_n-h_n(\epsilon)||^2 < \epsilon\Biggl\}\text{, and}
\end{alignat*}

\begin{equation}
S = \left\{(x_n)_{n = 1}^{\infty} \in \mathscr{H}''\ |\ \lim_{q\rightarrow\infty}\frac{1}{N_q}\sum_{n = 1}^{N_q}||x_n||^2 = 0\right\},
\end{equation}
we see that $\mathscr{H}''/S$ is a pre-Hilbert space. We will soon see that $\mathscr{H}''$ is sequentially closed under the topology induced by $\langle\cdot,\cdot\rangle_{\mathscr{H}}$, so we define $\mathscr{H}((f_n)_{n = 1}^{\infty},(g_n)_{n = 1}^{\infty},\allowbreak(N_q)_{q = 1}^{\infty}) = \mathscr{H}''/S$. We call $\mathscr{H}((f_n)_{n = 1}^{\infty},(g_n)_{n = 1}^{\infty},\allowbreak(N_q)_{q = 1}^{\infty})$ the Hilbert space induced by $((f_n)_{n = 1}^{\infty},(g_n)_{n = 1}^{\infty},\allowbreak(N_q)_{q = 1}^{\infty})$, and we may write $\mathscr{H}$ in place of $\mathscr{H}((f_n)_{n = 1}^{\infty},(g_n)_{n = 1}^{\infty},\allowbreak(N_q)_{q = 1}^{\infty})$ if $((f_n)_{n = 1}^{\infty},(g_n)_{n = 1}^{\infty},\allowbreak(N_q)_{q = 1}^{\infty})$ is understood from the context.\\

For $(f_n)_{n =1}^{\infty},(g_n)_{n = 1}^{\infty} \in SA(\mathcal{H})$ and $(N_q)_{q = 1}^\infty \subseteq \mathbb{N}$ we say that $((f_n)_{n = 1}^{\infty}, (g_n)_{n = 1}^{\infty},\allowbreak(N_q)_{q = 1}^{\infty})$ is a \textbf{permissible triple} if $\mathscr{H}((f_n)_{n = 1}^{\infty},(g_n)_{n = 1}^{\infty},\allowbreak(N_q)_{q = 1}^{\infty})$ is well defined. Given $(x_n)_{n = 1}^{\infty} \subseteq \mathcal{H}$ for which $(x_n)_{n = 1}^{\infty} \in \mathscr{H}''$, we may view $(x_n)_{n = 1}^{\infty}$ as an element of $\mathscr{H}$ by identifying $(x_n)_{n = 1}^{\infty}$ with its equivalence class in $\mathscr{H}''/S$. We will now show that $\mathscr{H}$ is a Hilbert space by verifying that it is complete.

\begin{theorem}
\label{CompletenessOfMathscrH}
Let $\mathcal{H}$ be a Hilbert space and $(f_n)_{n = 1}^\infty,(g_n)_{n = 1}^\infty \in SA(\mathcal{H})$. Let $ \allowbreak((f_n)_{n = 1}^{\infty},\allowbreak(g_n)_{n = 1}^{\infty},\allowbreak(N_q)_{q = 1}^{\infty})$ be a permissible triple and $\mathscr{H} = \mathscr{H}((f_n)_{n = 1}^{\infty},(g_n)_{n = 1}^{\infty},\allowbreak(N_q)_{q = 1}^{\infty})$. If $\allowbreak\left\{(\xi_{n,m})_{n = 1}^{\infty}\right\}_{m = 1}^{\infty} \subseteq \mathscr{H}''$ is a Cauchy sequence with respect to the metric induced by $||\cdot||_{\mathscr{H}}$, then there exists $(\xi_n)_{n = 1}^{\infty} \in \mathscr{H}''$ for which

\begin{equation}
    \lim_{m\rightarrow\infty}\left(\lim_{q\rightarrow\infty}\frac{1}{N_q}\sum_{n = 1}^{N_q}||\xi_{n,m}-\xi_n||^2\right) = 0.
\end{equation}
In particular, $\mathscr{H}$ is a Hilbert space.
\end{theorem}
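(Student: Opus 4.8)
The plan is to prove completeness by the standard trick for showing a normed space is complete: it suffices to show that every absolutely convergent series converges. So first I would pass to a subsequence of the Cauchy sequence $\{(\xi_{n,m})_{n=1}^\infty\}_{m=1}^\infty$ along which $\|\xi_{\cdot,m+1}-\xi_{\cdot,m}\|_{\mathscr{H}} < 2^{-m}$; if the subsequence converges, so does the original Cauchy sequence. Set $\eta_{n,m} := \xi_{n,m+1}-\xi_{n,m}$, so that $\xi_{n,M} = \xi_{n,1} + \sum_{m=1}^{M-1}\eta_{n,m}$ and $\limsup_{q\to\infty}\frac{1}{N_q}\sum_{n=1}^{N_q}\|\eta_{n,m}\|^2 < 4^{-m}$.

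The key step is to produce the limiting sequence $(\xi_n)_{n=1}^\infty \subseteq \mathcal{H}$ \emph{pointwise in $n$}, by summing the series $\xi_n := \xi_{n,1} + \sum_{m=1}^\infty \eta_{n,m}$ in $\mathcal{H}$ for each fixed $n$ — but this pointwise series need not converge for every $n$, only for "most" $n$ in a Cesàro-density sense. To handle this I would use a diagonal/truncation argument: for each $m$ choose $h_n^{(m)}$ in $\mathscr{H}'$ approximating $\eta_{n,m}$ to within $4^{-m}$ in the $\limsup$-average sense, and then define $\xi_n$ by summing a suitably truncated tail. Concretely, exploit that $\sum_m \|\eta_{n,m}\|$ has finite Cesàro-$\limsup$ (by Cauchy–Schwarz on the averages, $\limsup_q \frac{1}{N_q}\sum_{n\le N_q}\big(\sum_m\|\eta_{n,m}\|\big)^2 \le \big(\sum_m 2^{-m}\big)^2 < \infty$), so the set of $n$ where $\sum_m\|\eta_{n,m}\| = \infty$ has upper Cesàro density zero along $(N_q)$; on that exceptional set just set $\xi_n = 0$ (or $\xi_{n,1}$), and elsewhere set $\xi_n = \xi_{n,1}+\sum_m \eta_{n,m}$, the series converging in $\mathcal{H}$ by completeness of $\mathcal{H}$. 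One then checks $\frac{1}{N_q}\sum_{n\le N_q}\|\xi_{n,M}-\xi_n\|^2 \to 0$ as $M\to\infty$ uniformly-ish in $q$ via the triangle inequality in the $\ell^2$-average seminorm, using that the modification on the density-zero set contributes nothing to the $\limsup$.

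The remaining obligation is to verify $(\xi_n)_{n=1}^\infty \in \mathscr{H}''$, i.e. that it is approximable in the average seminorm by elements of $\mathscr{H}'$. This follows because $\xi_{n,M} \in \mathscr{H}''$ for each $M$ (given), $\mathscr{H}''$ consists of sequences approximable by $\mathscr{H}'$-elements, $\xi_n$ is approximated by $\xi_{n,M}$ for large $M$ in the average seminorm, and approximability is transitive: given $\epsilon>0$ pick $M$ with $\limsup_q\frac{1}{N_q}\sum_{n\le N_q}\|\xi_{n,M}-\xi_n\|^2 < \epsilon/4$, then pick $h_n(\epsilon/4)\in\mathscr{H}'$ with $\limsup_q\frac{1}{N_q}\sum_{n\le N_q}\|\xi_{n,M}-h_n(\epsilon/4)\|^2 < \epsilon/4$, and combine via $\|a-c\|^2 \le 2\|a-b\|^2 + 2\|b-c\|^2$. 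Also note $(\xi_n)\in SA(\mathcal{H})$ since $\|\xi_n\| \le \|\xi_{n,1}\| + \sum_m\|\eta_{n,m}\|$ has finite Cesàro-$\limsup$ of its square. I expect the main obstacle to be the bookkeeping around the density-zero exceptional set: one must be careful that redefining $\xi_n$ there does not break membership in $\mathscr{H}''$ nor the convergence $\|\xi_{\cdot,M}-\xi_\cdot\|_{\mathscr{H}}\to 0$, which works precisely because the seminorm $\big(\lim_q\frac{1}{N_q}\sum_{n\le N_q}\|\cdot\|^2\big)^{1/2}$ is blind to modifications on sets of upper density zero along $(N_q)$, and because on the complement of the exceptional set the pointwise sums genuinely converge in $\mathcal{H}$.
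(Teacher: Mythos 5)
Your construction of the limit object has a genuine gap. The crucial claim is that
\begin{equation*}
\limsup_{q\rightarrow\infty}\frac{1}{N_q}\sum_{n = 1}^{N_q}\Bigl(\sum_{m}\|\eta_{n,m}\|\Bigr)^2 \le \Bigl(\sum_m 2^{-m}\Bigr)^2,
\end{equation*}
from which you deduce that the set of $n$ where $\sum_m\|\eta_{n,m}\|=\infty$ has density zero along $(N_q)$ and that the pointwise series defines $\xi_n$ elsewhere. This inequality is false: the seminorm $\bigl(\limsup_q\frac{1}{N_q}\sum_{n\le N_q}\|\cdot\|^2\bigr)^{1/2}$ is only finitely subadditive (Minkowski holds for each fixed $q$ and each finite partial sum, but $\limsup_q\sup_M \neq \sup_M\limsup_q$), because for a fixed scale $N_q$ the individual terms $\frac{1}{N_q}\sum_{n\le N_q}\|\eta_{n,m}\|^2$ can be enormous for those $m$ whose limsup has not yet ``kicked in''. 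Concretely, take a fixed unit vector $v$ and $\eta_{n,m} = v\,\mathbbm{1}_{n\le n_m}$ with $n_m\rightarrow\infty$: each $\eta_{\cdot,m}$ has seminorm $0$, so these can be the increments of a sequence in $\mathscr{H}''$ that is Cauchy with any prescribed rate, yet $\sum_m\|\eta_{n,m}\| = \infty$ for \emph{every} $n$. So the exceptional set can have full density, and your prescription of setting $\xi_n=\xi_{n,1}$ (or $0$) there generally returns something far from the limit; mass escaping to infinity in $n$ is invisible to the seminorm but destroys pointwise (even density-one) convergence of the series. The transitivity argument you give for membership in $\mathscr{H}''$ is fine; the problem is solely that the pointwise-sum recipe does not produce the limit.

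This is exactly the difficulty the paper's proof (following Bohr--F\o lner, i.e.\ the completeness proof for Marcinkiewicz/Besicovitch-type spaces) is designed to avoid: instead of summing pointwise in $n$, one \emph{splices} the approximating sequences, defining $\xi_n := \xi_{n,m}$ for $N_{T_{m-1}} < n \le N_{T_m}$, where the times $T_m$ are chosen inductively so that the averages of $\|\xi_{n,k}-\xi_{n,m}\|^2$ over initial segments and over the individual blocks are uniformly controlled (conditions (i)--(iii) in the paper). The block construction only ever uses finitely many of the $\xi_{\cdot,m}$ on any initial segment $[1,N_T]$, which is what makes the estimate $\sum_{n\le N_T}\|\xi_{n,m}-\xi_n\|^2 \le 2N_T\epsilon_m$ possible and sidesteps the failure of countable subadditivity. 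To repair your argument you would need to replace the pointwise series by such a blocking (or an equivalent diagonal choice of $m$ as a function of $n$), at which point it becomes the paper's proof.
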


\begin{proof} We proceed by modifying the proof of the main result in section \textsection 2 of chapter II of \cite{BohrAndFolner}. Let $(\epsilon_m)_{m = 1}^{\infty}$ be a sequence of real numbers tending to 0 for which

\begin{equation}
    \lim_{q\rightarrow\infty}\frac{1}{N_q}\sum_{n = 1}^{N_q}||\xi_{n,m}-\xi_{n,k}||^2 < \epsilon_m
\end{equation}
whenever $k \ge m$. By induction, let $T_0 = N_0 = 0$ and let $(T_m)_{m = 1}^{\infty} \subseteq \mathbb{N}$ be such that conditions (i)-(iii) below hold.

\begin{itemize}
\item[(i)] For every $m \ge 1$, every $k \ge m$, and every $T \ge T_k$ 

\begin{equation}
    \frac{1}{N_T}\sum_{n = 1}^{N_T}||\xi_{n,k}-\xi_{n,m}||^2 < \epsilon_m.
\end{equation}

\item[(ii)] For every $m \ge 1$ and every $k \ge m$

\begin{equation}
    \frac{1}{N_{T_k}-N_{T_{k-1}}}\sum_{n = N_{T_{k-1}}+1}^{N_{T_k}}||\xi_{n,k}-\xi_{n,m}||^2 < \epsilon_m.
\end{equation}

\item[(iii)] For every $m \ge 1$

\begin{equation}
    \frac{1}{N_{T_m}}\sum_{j = 1}^{m-1}\sum_{n = N_{T_{j-1}}+1}^{N_{T_j}}||\xi_{n,j}-\xi_{n,m}||^2 < \epsilon_m.
\end{equation}
\end{itemize}

Now let us define $(\xi_n)_{n = 1}^{\infty}$ by $\xi_n = \xi_{n,m}$ where $m$ is such that $N_{T_{m-1}} < n \le N_{T_m}$. To conclude the proof, we note that for $m \ge 1$, $k > m$, and $T_{k-1} < T \le T_k$ we have

\begin{alignat*}{2}
    & \sum_{n = 1}^{N_T}||\xi_{n,m}-\xi_n||^2\numberthis\\
    = & \sum_{j = 1}^{m-1}\sum_{n = N_{T_{j-1}}+1}^{N_{T_j}}||\xi_{n,j}-\xi_{n,m}||^2+\sum_{j = m}^{k-1}\sum_{n = N_{T_{j-1}}+1}^{N_{T_j}}||\xi_{n,m}-\xi_n||^2+\sum_{n = N_{T_{k-1}}+1}^{N_T}||\xi_{n,m}-\xi_n||^2\\
    \le & N_{T_m}\epsilon_m+\sum_{j = m+1}^{k-1}(N_{T_j}-N_{T_{j-1}})\epsilon_m+\sum_{n = 1}^{N_T}||\xi_{n,k}-\xi_{n,m}||^2\\
    \le & (N_{T_m}+N_{T_{k-1}}-N_{T_m}+N_T)\epsilon_m \le 2N_T\epsilon_m.
\end{alignat*}
\end{proof}

For our applications in Section \ref{SectionOnMeasurePreservingSystems} we will only be working with uniformly bounded sequences, so we define

\begin{align*}
    &\text{UB}(\mathcal{H}) = \left\{(x_n)_{n = 1}^\infty \in SA(\mathcal{H})\ |\ \lim_{M\rightarrow\infty}\limsup_{N\rightarrow\infty}\frac{1}{N}\sum_{n = 1}^N\left|\left|x_n-(\text{min}(M,||x_n||)\frac{x_n}{||x_n||}\right|\right|^2 = 0\right\},
\end{align*}
which are those sequences in $SA(\mathcal{H})$ that can be well approximated by uniformly bounded sequences. We see by the construction of $\mathscr{H} := \mathscr{H}((f_n)_{n = 1}^\infty,(g_n)_{n = 1}^\infty,(N_q)_{q = 1}^\infty)$ that if $(f_n)_{n = 1}^\infty, (g_n)_{n = 1}^\infty \in \text{UB}(\mathcal{H})$, then $\mathscr{H} \subseteq \text{UB}(\mathcal{H})$. To see why the distinction between $\text{SA}(\mathcal{H})$ and $\text{UB}(\mathcal{H})$ is necessary, we refer the reader to Theorem 2.3.6 and Remark 2.3.7 of \cite{SohailsPhDThesis}.

\begin{definition}\label{DefinitionOfSequencesForETDS}
Let $\mathcal{H}$ be a Hilbert space and $(f_n)_{n = 1}^\infty \in \text{SA}(\mathcal{H})$. 
\begin{enumerate}[(i)]    
    \item A sequence $(f_n)_{n = 1}^\infty$ is \textbf{spectrally singular} if for any permissible triple of the form $P := \left((f_n)_{n = 1}^\infty, (f_n)_{n = 1}^\infty, (N_q)_{q = 1}^\infty\right)$  and $\mathscr{H} = \mathscr{H}(P)$, the sequence $(\langle (f_{n+h})_{n = 1}^\infty, (f_n)_{n = 1}^\infty\rangle_{\mathscr{H}})_{h = 1}^\infty$ is the Fourrier coefficients of a measure $\mu$ such that $\mu\perp\mathcal{L}$.

    \item A sequence $(f_n)_{n = 1}^\infty$ is \textbf{spectrally Lebesgue} if for any permissible triple of the form $P := \left((f_n)_{n = 1}^\infty, (f_n)_{n = 1}^\infty, (N_q)_{q = 1}^\infty\right)$  and $\mathscr{H} = \mathscr{H}(P)$, the sequence $(\langle (f_{n+h})_{n = 1}^\infty, (f_n)_{n = 1}^\infty\rangle_{\mathscr{H}})_{h = 1}^\infty$ is the Fourrier coefficients of a measure $\mu$ such that $\mu<<\mathcal{L}$.
\end{enumerate}
\end{definition}

\begin{remark}\label{RemarkAboutTerminology}
Given a Hilbert space $\mathcal{H}$ and a sequence $(f_n)_{n = 1}^\infty \in \text{SA}(\mathcal{H})$ we may define a left shift operator $L_S$ given by $L_S(f_n)_{n = 1}^\infty = (f_{n+1})_{n = 1}^\infty$, and we note that $L_S$ is naturally identified with a unitary operator on any Hilbert space $\mathscr{H}$ induced by a permissible triple from $\mathcal{H}$. We now see that $\xi := (f_n)_{n = 1}^\infty$ is a spectrally singular sequence if and only if for any permissible triple of the form $\left((f_n)_{n = 1}^\infty,(g_n)_{n = 1}^\infty,\allowbreak(N_q)_{q = 1}^\infty\right)$, we have $\mu_{\xi,L_S}\perp\mathcal{L}$. Similarly, we see that $(f_n)_{n = 1}^\infty$ is a spectrally Lebesgue if and only if we have $\mu_{\xi,L_S}<<\mathcal{L}$, but we point out to the reader that we do not require $\mathcal{L}<<\mu_{\xi,L_S}$.
\end{remark}

\begin{lemma}\label{CreatingWeaklyRigidSequences}
Let $(X,\mathscr{B},\mu,T)$ be a measure preserving system, $f \in L^2(X,\mu)$, $\xi := (T^nf)_{n = 1}^\infty \in \text{UB}(\mathcal{H})$, $P := \left((T^nf)_{n = 1}^\infty,(T^nf)_{n = 1}^\infty,(N_q)_{q = 1}^\infty\right)$ a permissible triple, and $\mathscr{H} = \mathscr{H}(P)$. We have that $\mu_{f,T} = \mu_{\xi,L_S}$.
\end{lemma}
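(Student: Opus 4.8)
The plan is to compute both spectral measures directly from their definitions as Fourier transforms of the relevant positive-definite sequences, and observe that the two sequences of coefficients coincide. Recall that $\mu_{f,T}$ is the unique positive measure on $[0,1]$ whose Fourier coefficients are $\widehat{\mu_{f,T}}(h) = \langle T^h f, f\rangle$ (the inner product taken in $L^2(X,\mu)$), where on the left we regard $T$ as the Koopman unitary $f \mapsto f\circ T$. On the other side, $\mu_{\xi, L_S}$ is the spectral measure of the vector $\xi = (T^n f)_{n=1}^\infty$ in the induced Hilbert space $\mathscr{H} = \mathscr{H}(P)$ with respect to the shift unitary $L_S$, so $\widehat{\mu_{\xi,L_S}}(h) = \langle L_S^h \xi, \xi\rangle_{\mathscr{H}}$. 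Since uniqueness of the representing measure (via Herglotz/Bochner for positive-definite sequences on $\mathbb{Z}$, extended to $\mathbb{Z}$ by conjugate symmetry) means a measure on $[0,1]$ is determined by its Fourier coefficients, it suffices to check $\langle L_S^h \xi, \xi\rangle_{\mathscr{H}} = \langle T^h f, f\rangle$ for every $h \in \mathbb{N}\cup\{0\}$.

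First I would unwind the definitions on the $\mathscr{H}$ side. By the construction of $\mathscr{H}$, and since $L_S^h \xi$ is represented by the sequence $(T^{n+h} f)_{n=1}^\infty = ((L_S^h\xi)_n)_{n=1}^\infty$, the inner product is
\begin{equation}
\langle L_S^h \xi, \xi\rangle_{\mathscr{H}} = \lim_{q\to\infty}\frac{1}{N_q}\sum_{n=1}^{N_q}\langle T^{n+h}f, T^n f\rangle,
\end{equation}
where the limit exists because $P$ is a permissible triple. Now I would use that $T$ acts unitarily on $L^2(X,\mu)$: for each $n$, $\langle T^{n+h}f, T^n f\rangle = \langle T^h f, f\rangle$, a quantity independent of $n$. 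Hence each Cesàro average $\frac{1}{N_q}\sum_{n=1}^{N_q}\langle T^{n+h}f, T^n f\rangle$ equals $\langle T^h f, f\rangle$ exactly, so the limit is $\langle T^h f, f\rangle = \widehat{\mu_{f,T}}(h)$. Therefore the two positive-definite sequences of Fourier coefficients agree on $\mathbb{N}\cup\{0\}$, and by conjugate symmetry on all of $\mathbb{Z}$; uniqueness of the representing measure then forces $\mu_{f,T} = \mu_{\xi, L_S}$.

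There is essentially no real obstacle here — the proof is a one-line computation once the definitions are unpacked, the only mild point of care being the bookkeeping that $\xi$ genuinely lies in $\mathscr{H}''$ (which holds since $\xi \in \mathrm{UB}(\mathcal{H})$ by hypothesis and $\mathrm{UB}(\mathcal{H}) \subseteq SA(\mathcal{H})$, and the constant sequences spanning $\mathscr{H}'$ trivially approximate it — indeed $\xi = (f_n)$ with $f_n = T^n f$ is itself of the form $(f_{n+0})$) and that the shift $L_S$ is the unitary referred to in Remark \ref{RemarkAboutTerminology}. The essential content is precisely the invariance $\langle T^{n+h}f, T^n f\rangle = \langle T^h f, f\rangle$ coming from unitarity of the Koopman operator, which collapses the averaging over $(N_q)$ to a triviality. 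I would present it in exactly this order: state that it suffices to match Fourier coefficients, expand $\langle L_S^h\xi,\xi\rangle_{\mathscr{H}}$ as the Cesàro limit, apply unitarity of $T$ to see every term is $\langle T^h f,f\rangle$, and conclude by uniqueness of the spectral measure.
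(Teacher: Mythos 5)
Your proposal is correct and follows the same route as the paper: the paper's proof is exactly the observation that $\lim_{q\to\infty}\frac{1}{N_q}\sum_{n=1}^{N_q}\langle T^{n+h}f,T^nf\rangle = \langle T^hf,f\rangle$ by unitarity of the Koopman operator, so the Fourier coefficients of $\mu_{\xi,L_S}$ and $\mu_{f,T}$ coincide. Your added remarks on uniqueness of the representing measure and on $\xi$ lying in $\mathscr{H}''$ are just the details the paper leaves implicit.
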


\begin{proof}
It suffices to observe that for any $h \in \mathbb{N}$ we have

\begin{equation}
    \lim_{q\rightarrow\infty}\frac{1}{N_q}\sum_{n = 1}^{N_q}\langle T^{n+h}g, T^ng\rangle = \langle T^hg, g\rangle.
\end{equation}
\end{proof}

\begin{lemma}
\label{LebesgueIsDisjointFromSingular}
Let $\mathcal{H}$ be a Hilbert space and $(f_n)_{n = 1}^\infty, (g_n)_{n = 1}^\infty \in \text{SA}(\mathcal{H})$. If $(f_n)_{n = 1}^\infty$ is spectrally Lebesgue and $(g_n)_{n = 1}^\infty$ is spectrally singular, then

    \begin{equation}
    \lim_{N\rightarrow\infty}\frac{1}{N}\sum_{n = 1}^N\langle f_n, g_n\rangle = 0.
\end{equation}
\end{lemma}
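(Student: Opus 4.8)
The plan is to reduce, via a routine subsequence argument, to the case of a single permissible triple, then to identify the Cesàro average with an inner product in the induced Hilbert space and annihilate it using the mutual singularity of the relevant spectral measures. For the reduction: by the Cauchy--Schwarz estimate recorded at the start of this section, the averages $a_N:=\frac{1}{N}\sum_{n=1}^N\langle f_n,g_n\rangle$ form a bounded sequence, so it suffices to show that every convergent subsequence of $(a_N)$ has limit $0$. Given such a subsequence $(a_{M_j})_j$, I would apply the diagonalization used in the construction of $\mathscr{H}$ to extract a further subsequence $(N_q)_q\subseteq(M_j)_j$ along which $\lim_{q\to\infty}\frac{1}{N_q}\sum_{n=1}^{N_q}\langle x_{n+h},y_n\rangle$ exists for all $h\in\mathbb{N}\cup\{0\}$ and all $(x_n),(y_n)\in\{(f_n),(g_n)\}$; then $P:=((f_n),(g_n),(N_q))$ is a permissible triple, the limit of $(a_{M_j})_j$ equals $\lim_{q\to\infty}\frac{1}{N_q}\sum_{n=1}^{N_q}\langle f_n,g_n\rangle$, and it remains to prove that this limit is $0$ for an arbitrary such $P$.

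For the spectral argument, fix $P$ and set $\mathscr{H}=\mathscr{H}(P)$; let $L_S$ be the left shift, a unitary on $\mathscr{H}$, and put $\xi:=(f_n)_{n=1}^\infty$ and $\eta:=(g_n)_{n=1}^\infty$, regarded as vectors of $\mathscr{H}$. By the definition of $\langle\cdot,\cdot\rangle_{\mathscr{H}}$ we have $\langle\xi,\eta\rangle_{\mathscr{H}}=\lim_{q\to\infty}\frac{1}{N_q}\sum_{n=1}^{N_q}\langle f_n,g_n\rangle$, so the task becomes to show $\langle\xi,\eta\rangle_{\mathscr{H}}=0$. By Remark~\ref{RemarkAboutTerminology}, the hypotheses on $(f_n)$ and $(g_n)$ give $\mu_{\xi,L_S}\ll\mathcal{L}$ and $\mu_{\eta,L_S}\perp\mathcal{L}$. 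Let $E$ denote the spectral resolution of $L_S$ on $[0,1]$ and let $\nu$ be the complex measure with $\widehat{\nu}(n)=\langle L_S^n\xi,\eta\rangle_{\mathscr{H}}$ for all $n$, i.e.\ $\nu=\langle E(\cdot)\xi,\eta\rangle$; then $\widehat{\nu}(0)=\langle\xi,\eta\rangle_{\mathscr{H}}$. Since each $E(A)$ is a self-adjoint projection, $|\nu(A)|=|\langle E(A)\xi,E(A)\eta\rangle|\le\mu_{\xi,L_S}(A)^{1/2}\mu_{\eta,L_S}(A)^{1/2}$ for every Borel $A\subseteq[0,1]$, whence $|\nu|\ll\mu_{\xi,L_S}$ and $|\nu|\ll\mu_{\eta,L_S}$. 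Because $\mu_{\xi,L_S}\ll\mathcal{L}$ while $\mu_{\eta,L_S}\perp\mathcal{L}$, these two measures are mutually singular, and a (nonnegative) measure dominated by each of two mutually singular measures must vanish; hence $\nu=0$, so $\langle\xi,\eta\rangle_{\mathscr{H}}=\widehat{\nu}(0)=0$, completing the reduction and the proof.

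The spectral computation in the second paragraph is the conceptual heart, but it is routine once the framework is set up; the point I expect to require the most care is the bookkeeping in the reduction step — checking that an arbitrary convergent subsequence of $(a_N)$ can be refined to a permissible triple for the \emph{pair} $((f_n),(g_n))$ without altering the relevant limit, and confirming that Remark~\ref{RemarkAboutTerminology} legitimately transfers the ``spectrally Lebesgue'' and ``spectrally singular'' hypotheses (which are phrased via the diagonal triples $((f_n),(f_n),\cdot)$ and $((g_n),(g_n),\cdot)$) to the mixed triple $P$ appearing here. Granting those, the conclusion follows from the elementary fact that a measure which is simultaneously absolutely continuous and singular with respect to $\mathcal{L}$ is zero.
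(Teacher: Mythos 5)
Your argument is correct and follows essentially the same route as the paper: pass to a subsequence along which the averages converge, refine to a permissible triple, identify the limit with $\langle\xi,\eta\rangle_{\mathscr{H}}$, and conclude from the mutual singularity of $\mu_{\xi,L_S}$ and $\mu_{\eta,L_S}$ (via Remark \ref{RemarkAboutTerminology}). The only difference is that you spell out, via the spectral resolution of $L_S$, the standard fact that vectors with mutually singular spectral measures are orthogonal, which the paper simply invokes.
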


\begin{proof}
Let $(M_q)_{q = 1}^\infty \subseteq \mathbb{N}$ be any increasing sequence for which

\begin{equation}
    \lim_{q\rightarrow\infty}\frac{1}{M_q}\sum_{n = 1}^{M_q}\langle f_n, g_n\rangle
\end{equation}
exists. Let $(N_q)_{q = 1}^\infty$ be a subsequence of $(M_q)_{q = 1}^\infty$ for which $\left((f_n)_{n = 1}^\infty,(g_n)_{n = 1}^\infty,\allowbreak(N_q)_{q = 1}^\infty\right)$ is a permissible triple and observe that

\begin{align}
    &\lim_{q\rightarrow\infty}\frac{1}{M_q}\sum_{n = 1}^{M_q}\langle f_n, g_n\rangle = \lim_{q\rightarrow\infty}\frac{1}{N_q}\sum_{n = 1}^{N_q}\langle f_n, g_n\rangle = \left\langle (f_n)_{n = 1}^\infty,(g_n)_{n = 1}^\infty\right\rangle_{\mathscr{H}} = 0,
\end{align}
where the last equality follows from the fact that $\xi_1 := (f_n)_{n = 1}^\infty$ and $\xi_2 := (g_n)_{n = 1}^\infty$ are vectors in $\mathscr{H}$ with $\mu_{\xi_1,L_S}\perp\mu_{\xi_2,L_S}$.
\end{proof}

\begin{lemma}
\label{StrongMixingTimesRigidNormAveragesTo0}
Let $(X,\mathscr{B},\mu)$ be a probability space, $\mathcal{H} = L^2(X,\mu)$, $(f_n)_{n = 1}^\infty \in \text{SA}(\mathcal{H})$, and $(g_n)_{n = 1}^\infty \in \text{UB}(\mathcal{H})$. If $(f_n)_{n = 1}^\infty$ is spectrally Lebesgue and $(g_n)_{n = 1}^\infty$ is spectrally singular, then

    \begin{equation}
    \lim_{N\rightarrow\infty}\left|\left|\frac{1}{N}\sum_{n = 1}^Nf_ng_n\right|\right| = 0.
\end{equation}
\end{lemma}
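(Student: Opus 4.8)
The plan is to prove that the pointwise product sequence $(f_ng_n)_{n=1}^\infty$ is again spectrally Lebesgue, and then to deduce the norm statement from a mean‑ergodic (van der Corput type) averaging argument in an induced Hilbert space. Two harmless reductions come first: since $(g_n)\in\text{UB}(\mathcal{H})$ is spectrally singular, I reduce to the case $\sup_n\|g_n\|_{L^\infty(X,\mu)}=:M<\infty$, so that $f_ng_n\in L^2(X,\mu)$ and $(f_ng_n)\in\text{SA}(\mathcal{H})$, and I fix $C$ with $\limsup_N\frac1N\sum_{n\le N}\|f_n\|^2\le C$.

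Granting that $(f_ng_n)$ is spectrally Lebesgue, the norm conclusion follows as follows. For each fixed $H$, shifting the averaging window gives $\frac1N\sum_{n\le N}f_ng_n=\frac1N\sum_{n\le N}\big(\frac1H\sum_{h=1}^H f_{n+h}g_{n+h}\big)+o_N(1)$, so by the triangle inequality and Cauchy–Schwarz $\limsup_N\big\|\frac1N\sum_{n\le N}f_ng_n\big\|^2\le\limsup_N\frac1N\sum_{n\le N}\big\|\frac1H\sum_{h\le H}f_{n+h}g_{n+h}\big\|^2$. Given an arbitrary subsequence realising the outer $\limsup$, I refine it to $(N_q)$ making $P:=((f_ng_n),(f_ng_n),(N_q))$ permissible (and the inner averages convergent for every $H$); along $(N_q)$ the right‑hand side equals $\big\|\frac1H\sum_{h\le H}L_S^h\xi_{fg}\big\|_{\mathscr{H}}^2$ with $\xi_{fg}:=(f_ng_n)$ and $\mathscr{H}=\mathscr{H}(P)$, and letting $H\to\infty$ the mean ergodic theorem for the unitary $L_S$ on $\mathscr{H}$ converts this to $\|P_{\text{inv}}\xi_{fg}\|_{\mathscr{H}}^2=\mu_{\xi_{fg},L_S}(\{0\})$. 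Since $(f_ng_n)$ is spectrally Lebesgue, $\mu_{\xi_{fg},L_S}\ll\mathcal{L}$ is nonatomic, so this is $0$; as the starting subsequence was arbitrary, $\|\frac1N\sum_{n\le N}f_ng_n\|\to0$. (Note that in this step I really only need that $\mu_{\xi_{fg},L_S}$ has no atom at $0$, i.e. that $P_{\text{inv}}\xi_{fg}=0$ in every induced Hilbert space.)

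The heart of the matter is thus to establish that $(f_ng_n)$ is spectrally Lebesgue. Fixing a permissible triple and refining $(N_q)$ so that the triples built from $(f_n)$, from $(g_n)$, and the mixed ones are permissible too, I form $\mathscr{H}$ carrying the $L_S$‑orbits of $\xi_f=(f_n)$, $\xi_g=(g_n)$ and $\xi_{fg}$, with $\mu_{\xi_f,L_S}\ll\mathcal{L}$ and $\mu_{\xi_g,L_S}\perp\mathcal{L}$. For a vector $\eta=(\eta_n)$ of the singular (in particular the $L_S$‑invariant) spectral subspace of $L_S$ one has $\langle\xi_{fg},\eta\rangle_{\mathscr{H}}=\lim_q\frac1{N_q}\sum_{n\le N_q}\langle f_n,\overline{g_n}\eta_n\rangle$, so it suffices to show $(\overline{g_n}\eta_n)_n$ is spectrally singular and then invoke Lemma \ref{LebesgueIsDisjointFromSingular}. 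When $\eta_n=w$ is a constant sequence with $w\in L^\infty(X)$ this is immediate: coordinatewise multiplication by $w$ is a bounded operator commuting with $L_S$ on $\mathscr{H}$, so $\mu_{(\overline{g_n}w),L_S}\ll\mu_{(\overline{g_n}),L_S}$, and $(\overline{g_n})_n$ is spectrally singular because its spectral measure is the reflection of $\mu_{\xi_g,L_S}$. In particular $\xi_{fg}$ is orthogonal to every constant sequence; equivalently $\frac1N\sum_{n\le N}f_ng_n\rightharpoonup0$ weakly in $L^2(X,\mu)$, which en passant gives the first displayed conclusion of Theorem \ref{MainResultsInIntro}. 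To pass to a general singular/invariant $\eta$ I would use that such $\eta$ is "slowly varying" — $\frac1{N_q}\sum_n\|\eta_{n+1}-\eta_n\|^2\to0$, hence $\frac1{N_q}\sum_n(\eta_{n+h}\overline{\eta_n}-|\eta_n|^2)$ is negligible for each $h$ — to reduce $\mu_{(\overline{g_n}\eta_n),L_S}$ to the spectral measure of $(g_n)$ modulated by a slowly varying amplitude, and show that such a sequence is again spectrally singular.

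The step I expect to be the main obstacle is exactly this last reduction. The $L_S$‑invariant (a fortiori the singular spectral) subspace of $L_S$ on an induced Hilbert space is in general strictly larger than the closed span of the constant sequences — an $L_S$‑invariant vector need not be a constant sequence, e.g. $(\eta_n)=(\cos\log n\cdot\mathbbm{1}_X)$ — so one cannot simply quote the constant‑$w$ case. One must instead carry out the reduction to "slowly varying amplitudes" (iterating the above decomposition, where the relevant scalar sequences such as $(\langle f_n,w\rangle)_n$ are themselves spectrally Lebesgue since $(f_n)$ is), all the while tracking the passage between the $L^\infty(X)$ bounds required for multiplication operators to be bounded and $L_S$‑commuting and the merely $L^2(X)$ bounds available for elements of $\mathscr{H}$. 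Once this is in hand, the mean‑ergodic argument of the second paragraph completes the proof.
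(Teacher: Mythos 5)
Your second paragraph (reducing the norm conclusion to the statement that $\xi_{fg}:=(f_ng_n)_{n=1}^\infty$ has no $L_S$-invariant component in any induced Hilbert space) is fine, but the proposal stops exactly where the content of the lemma lies, and you say so yourself: you can only pair $\xi_{fg}$ against constant test sequences $\eta_n\equiv w$, and the $L_S$-invariant (let alone singular) vectors of $\mathscr{H}$ are not spanned by constant sequences. The constant-$w$ case yields only weak convergence of $\frac{1}{N}\sum_{n\le N}f_ng_n$ to $0$, strictly weaker than the asserted norm convergence, so there is a genuine gap at the decisive step. Moreover, the intermediate target you set -- that $(f_ng_n)_{n=1}^\infty$ is spectrally Lebesgue -- is stronger than what is needed and is doubtful in general: applying the lemma to the modulated sequences $(e^{2\pi in\theta}g_n)_{n=1}^\infty$, which are still spectrally singular, only forces the spectral measure of $\xi_{fg}$ to be atomless, not absolutely continuous. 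For instance, with $f_n=(R_1\times R_2)^n(F_1\otimes\overline{F_2})$ and $g_n=1\otimes R_2^nF_2$, $|F_2|=1$, the product is $R_1^nF_1\otimes 1$ with spectral measure $\mu_{F_1,R_1}$, while whether $(f_n)_{n=1}^\infty$ is spectrally Lebesgue is governed by the convolution of $\mu_{F_1,R_1}$ with the reflection of $\mu_{F_2,R_2}$; a convolution of two continuous singular measures can be absolutely continuous, so there is no apparent reason the product must be spectrally Lebesgue even when the hypotheses hold. Thus the "slowly varying amplitude" program would be trying to prove something that is likely false; at best you should aim only at killing the invariant part of $\xi_{fg}$.

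The paper closes precisely your gap by a contradiction argument that manufactures the one invariant test vector it needs, rather than treating all of them. Assuming $\lim_q\left|\left|\frac{1}{N_q}\sum_{n=1}^{N_q}f_ng_n\right|\right|\ge\epsilon$, it forms the normalized block averages $\xi_q=\xi_q'/||\xi_q'||$ with $\xi_q'=\frac{1}{N_{q+1}}\sum_{N_q<n\le N_{q+1}}f_ng_n$, and defines the block-constant sequence $G_n=\xi_q$ for $N_q<n\le N_{q+1}$. This $(G_n)_{n=1}^\infty$ is $L_S$-invariant in every induced Hilbert space; the correlations of $(\overline{g_n}G_n)_{n=1}^\infty$ reduce to those of $(\overline{g_n})_{n=1}^\infty$ (using that $G_{n+h}=G_n$ for all but a vanishing proportion of $n$), so it is spectrally singular; and then Lemma \ref{LebesgueIsDisjointFromSingular} gives $\lim_q\frac{1}{N_q}\sum_{n=1}^{N_q}\langle f_n,\overline{g_n}G_n\rangle=0$, contradicting that this limit equals $\lim_q||\xi_{q-1}'||\ge\epsilon$. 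In other words, your idea of moving $\overline{g_n}$ onto an invariant test sequence is the right germ, but it must be run against this single, concretely constructed $G_n$ coming from the averages themselves, where the transfer is justified; testing against arbitrary invariant or singular $\eta$ is neither feasible by your reduction nor necessary.
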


\begin{proof}
Let us assume for the sake of contradiction that for some $\epsilon > 0$ and $(N_q)_{q = 1}^\infty \subseteq \mathbb{N}$ we have

\begin{equation}
    \lim_{q\rightarrow\infty}\left|\left|\frac{1}{N_q}\sum_{n = 1}^{N_q}f_ng_n\right|\right| \ge \epsilon.
\end{equation}
By passing to a subsequence of $(N_q)_{q = 1}^\infty$ if necessary, we may assume without loss of generality that
\begin{equation}
    \lim_{q\rightarrow\infty}\left(\left|\left|\frac{1}{N_q}\sum_{n = 1}^{N_{q-1}}f_ng_n\right|\right|+\frac{N_{q-1}}{N_q}\right) = 0.
\end{equation}
For $q \in \mathbb{N}$ let

\begin{equation}
    \xi_q' = \frac{1}{N_{q+1}}\sum_{n = N_q+1}^{N_{q+1}}f_ng_n\text{ and }\xi_q = \frac{\xi_q'}{||\xi_q'||}.
\end{equation}
Now consider the sequence $(G_n)_{n = 1}^\infty \in \text{UB}(\mathcal{H})$ given by $G_n = \xi_q$ for $N_q < n \le N_{q+1}$. Since

\begin{equation}
    \lim_{N\rightarrow\infty}\frac{1}{N}\sum_{n = 1}^N||G_{n+1}-G_n|| \le \lim_{q\rightarrow\infty}\frac{q}{N_q} = 0,
\end{equation}
we see that $(G_n)_{n = 1}^\infty$ is an invariant sequence in the sense that $\left|\left|S(G_n)_{n = 1}^\infty-(G_n)_{n = 1}^\infty\right|\right|_{\mathscr{H}} = 0$ regardless of the permissible triple used to construct $\mathscr{H}$, and we observe that $||G_n|| = 1$ for all $n$. To see that $(\overline{g_n}G_n)_{n = 1}^\infty$ is spectrally singular in $\mathscr{H} = \mathscr{H}((f_n)_{n = 1}^\infty,(\overline{g_n}G_n)_{n = 1}^\infty,(M_q)_{q = 1}^\infty)$ for any valid $(M_q)_{q = 1}^\infty \subseteq (N_q)_{q = 1}^\infty$, it suffices to observe that

\begin{alignat}{2}
&\lim_{q\rightarrow\infty}\frac{1}{M_q}\sum_{n = 1}^{M_q}\left\langle \overline{g_{n+h}}G_{n+h}, \overline{g_n}G_n\right\rangle = \lim_{q\rightarrow\infty}\frac{1}{M_q}\sum_{n = 1}^{M_q}\left\langle \overline{g_{n+h}}G_{n+h}\overline{G_n}, \overline{g_n}\right\rangle\\
=& \lim_{q\rightarrow\infty}\frac{1}{M_q}\sum_{n = 1}^{M_q}\left\langle \overline{g_{n+h}}, \overline{g_n}\right\rangle = \hat{\mu}(h),
\end{alignat}
where $\nu$ is the spectral measure of $(g_n)_{n = 1}^\infty$ in $\mathscr{H}$ with respect to $S$, and $\mu$ is the measure given by $\mu(E) = \nu(1-E)$. To conclude the proof, it suffices to observe that

\begin{alignat*}{2}
    &&\lim_{q\rightarrow\infty}\frac{1}{N_q}\sum_{n = 1}^{N_q}\left\langle f_n, \overline{g_n}G_n\right\rangle = \lim_{q\rightarrow\infty}\frac{1}{N_q}\sum_{n = 1}^{N_q}\left\langle f_ng_n, G_n\right\rangle\numberthis\\ =&& \lim_{q\rightarrow\infty}\frac{1}{N_q}\sum_{n = N_{q-1}+1}^{N_q}\left\langle f_ng_n, \xi_{q-1}\right\rangle = \lim_{q\rightarrow\infty}\left|\left|\xi_{q-1}'\right|\right| \ge \epsilon,
\end{alignat*}
which contradicts Lemma \ref{LebesgueIsDisjointFromSingular}.
\end{proof}

For our next corollary, we recall that $\mathbb{C}$ is a Hilbert space when equipped with the inner product $\langle x,y\rangle_{\mathbb{C}} = x\overline{y}$.

\begin{corollary}
\label{NearlyStronglyMixingAveragesTo0}
If $\mathcal{H}$ is a Hilbert space, $(f_n)_{n = 1}^\infty \in \text{SA}(\mathcal{H})$ is spectrally Lebesgue, and $(c_n)_{n = 1}^\infty \subseteq \mathbb{C}$ is bounded and spectrally singular, then

\begin{equation}
    \lim_{N\rightarrow\infty}\left|\left|\frac{1}{N}\sum_{n = 1}^Nc_nf_n\right|\right| = 0.
\end{equation}
In particular, we have that

\begin{equation}
    \lim_{N\rightarrow\infty}\left|\left|\frac{1}{N}\sum_{n = 1}^Nf_n\right|\right| = 0.
\end{equation}
\end{corollary}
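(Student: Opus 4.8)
The plan is to transfer the statement to an $L^2$ space and invoke Lemma~\ref{StrongMixingTimesRigidNormAveragesTo0}, with the scalars $c_n$ realized as constant functions. First I would reduce to the case that $\mathcal{H}$ is separable: passing to the closed linear span of $\{f_n : n \in \mathbb{N}\}$ alters neither the hypotheses on $(f_n)_{n = 1}^\infty$ nor the quantity $\left\|\frac1N\sum_{n = 1}^N c_n f_n\right\|$, and this span is separable. A separable Hilbert space embeds isometrically, as a closed subspace, into $L^2(X,\mu)$ for a suitable probability space $(X,\mathscr{B},\mu)$ (take $X = [0,1]$ with Lebesgue measure and use that all separable infinite-dimensional Hilbert spaces are isometrically isomorphic, together with the obvious embedding of a finite-dimensional space via a finite orthonormal set); let $\Phi$ denote such an embedding.

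Next I would put $f_n' := \Phi(f_n) \in L^2(X,\mu)$ and $g_n := c_n\mathbbm{1}_X \in L^\infty(X,\mu)$, so that $f_n'g_n = c_n f_n' = \Phi(c_n f_n)$ and hence $\left\|\frac1N\sum_{n = 1}^N f_n'g_n\right\| = \left\|\frac1N\sum_{n = 1}^N c_n f_n\right\|$ because $\Phi$ is a linear isometry. Since $\Phi$ preserves inner products, $\langle f'_{n+h}, f'_n\rangle = \langle f_{n+h}, f_n\rangle$ for all $n$ and $h$, so $(f_n')_{n = 1}^\infty$ is spectrally Lebesgue and lies in $\text{SA}(L^2(X,\mu))$. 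Likewise $\langle g_{n+h}, g_n\rangle = c_{n+h}\overline{c_n} = \langle c_{n+h}, c_n\rangle_{\mathbb{C}}$, so every permissible triple built from $(g_n)_{n = 1}^\infty$ yields exactly the same sequence of inner products as the corresponding triple built from $(c_n)_{n = 1}^\infty$; hence $(g_n)_{n = 1}^\infty$ is spectrally singular, and it lies in $\text{UB}(L^2(X,\mu))$ because $\|g_n\|_\infty = |c_n|$ is bounded. Lemma~\ref{StrongMixingTimesRigidNormAveragesTo0} therefore gives $\lim_{N\rightarrow\infty}\left\|\frac1N\sum_{n = 1}^N f_n'g_n\right\| = 0$, which is the first claim. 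For the last assertion I would simply take $c_n = 1$ for every $n$: then for any permissible triple the associated inner products are all equal to $1$, i.e. they are the Fourier coefficients of $\delta_0$, and $\delta_0 \perp \mathcal{L}$, so the constant sequence $1$ is spectrally singular; applying the first claim with this choice of $(c_n)_{n = 1}^\infty$ yields $\lim_{N\rightarrow\infty}\left\|\frac1N\sum_{n = 1}^N f_n\right\| = 0$.

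I do not expect a genuine obstacle here; this is a routine transfer of Lemma~\ref{StrongMixingTimesRigidNormAveragesTo0}. The only points meriting a moment's care are exhibiting the isometric embedding $\Phi$ for separable $\mathcal{H}$ of arbitrary dimension, and checking that representing $c_n$ by the constant function $c_n\mathbbm{1}_X$ reproduces the spectral data of $(c_n)_{n = 1}^\infty$ verbatim, so that spectral singularity is preserved; both are immediate. (Alternatively, one can repeat the contradiction argument in the proof of Lemma~\ref{StrongMixingTimesRigidNormAveragesTo0} word for word, replacing the pointwise product $f_n g_n$ by the scalar multiple $c_n f_n$ and $\overline{g_n}G_n$ by $\overline{c_n}G_n$; in that route one must additionally note that conjugating the Fourier coefficients of the spectral measure of $(c_n)_{n = 1}^\infty$ corresponds to the pushforward under $x\mapsto 1-x$, which preserves mutual singularity with $\mathcal{L}$.)
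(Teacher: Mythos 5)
Your proposal is correct and follows essentially the same route as the paper: reduce to a separable subspace embedded in $L^2([0,1],\mathcal{L})$, realize the scalars as the constant functions $c_n\mathbbm{1}_{[0,1]}$ (noting the spectral data, and hence spectral singularity, is unchanged), apply Lemma \ref{StrongMixingTimesRigidNormAveragesTo0}, and obtain the last claim from the constant sequence whose spectral measure is $\delta_0\perp\mathcal{L}$. Your explicit check that the constant-function sequence lies in $\text{UB}(L^2)$ is a small extra care the paper leaves implicit, but it is the same argument.
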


\begin{proof}
    Firstly, we observe that the smallest subspace of $\mathcal{H}$ containing $(f_n)_{n = 1}^\infty$ is separable, so it is unitarily isomorphic to a subspace of $L^2([0,1],\mathcal{L})$, hence we may assume without loss of generality that $(f_n)_{n = 1}^\infty \in \text{SA}(L^2([0,1],\mathcal{L})$. Next, let $(g_n)_{n = 1}^\infty \in \text{SA}(L^2(X,\mathcal{L}))$ given by $g_n = c_n\mathbbm{1}_{[0,1]}$, and observe that for any permissible triple $P = ((c_n)_{n = 1}^\infty,(c_n)_{n = 1}^\infty,(N_q)_{q = 1}^\infty)$ and $h \in \mathbb{N}$ we have

    \begin{equation}
        \lim_{N\rightarrow\infty}\frac{1}{N}\sum_{n = 1}^N\langle g_{n+h},g_n\rangle = \lim_{N\rightarrow\infty}\frac{1}{N}\sum_{n = 1}^Nc_{n+h}\overline{c_n},
    \end{equation}
    so $(g_n)_{n = 1}^\infty$ is spectrally singular. We may now use Lemma \ref{StrongMixingTimesRigidNormAveragesTo0} to see that 

    \begin{equation}
        0 = \lim_{N\rightarrow\infty}\left|\left|\frac{1}{N}\sum_{n = 1}^Nf_ng_n\right|\right| = \lim_{N\rightarrow\infty}\left|\left|\frac{1}{N}\sum_{n = 1}^Nc_nf_n\right|\right|.
    \end{equation}
    For the latter half of the theorem, it suffices to observe that the sequence $(c_n)_{n = 1}^\infty$ given by $c_n = 1$ for all $n$ will always have $\delta_0$ as its spectral measure, and is consequently a spectrally singular sequence.
\end{proof}

This shows us that the following theorem, which is the main result of this section, is indeed a generalization of Theorem \ref{ClassicalvanderCorput'sDifferenceTheorems}(i).

\begin{theorem}
\label{StrongMixingClassicalvdC}
Let $\mathcal{H}$ be a Hilbert space and $(f_n)_{n = 1}^\infty \in \text{SA}(\mathcal{H})$. If

\begin{equation}\label{EquationForMainApplication}
    \sum_{h = 1}^\infty\limsup_{N\rightarrow\infty}\left|\frac{1}{N}\sum_{n = 1}^N\langle f_{n+h},f_n\rangle\right|^2 < \infty
\end{equation}
for all $h \in \mathbb{N}$, then $(f_n)_{n = 1}^\infty$ is spectrally Lebesgue.
\end{theorem}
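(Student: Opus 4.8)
The plan is to reduce the statement to the classical fact that a finite positive Borel measure on the circle whose two-sided sequence of Fourier coefficients lies in $\ell^2(\mathbb{Z})$ is absolutely continuous with respect to $\mathcal{L}$. Fix an arbitrary permissible triple $P = \left((f_n)_{n=1}^\infty, (f_n)_{n=1}^\infty, (N_q)_{q=1}^\infty\right)$, set $\mathscr{H} = \mathscr{H}(P)$, and let $\xi := (f_n)_{n=1}^\infty \in \mathscr{H}$. As recorded in Remark \ref{RemarkAboutTerminology}, the left shift $L_S$ acts as a unitary operator on $\mathscr{H}$, and with $\mu := \mu_{\xi, L_S}$ we have, by the very definition of $\langle\cdot,\cdot\rangle_{\mathscr{H}}$,
\[
a_h := \widehat{\mu}(h) = \langle L_S^h\xi, \xi\rangle_{\mathscr{H}} = \left\langle (f_{n+h})_{n=1}^\infty, (f_n)_{n=1}^\infty\right\rangle_{\mathscr{H}} = \lim_{q\to\infty}\frac{1}{N_q}\sum_{n=1}^{N_q}\langle f_{n+h}, f_n\rangle
\]
for $h \ge 0$, with $a_h = \overline{a_{-h}}$ for $h < 0$. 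Since $\mu$ is the positive-definite measure attached to $\xi$, showing that $(f_n)_{n=1}^\infty$ is spectrally Lebesgue is exactly the assertion that $\mu \ll \mathcal{L}$.

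Next I would check that $(a_h)_{h\in\mathbb{Z}} \in \ell^2(\mathbb{Z})$. For every $h \ge 1$, passing to a limit along $(N_q)$ only shrinks absolute values, so
\[
|a_h| = \left|\lim_{q\to\infty}\frac{1}{N_q}\sum_{n=1}^{N_q}\langle f_{n+h}, f_n\rangle\right| \le \limsup_{N\to\infty}\left|\frac{1}{N}\sum_{n=1}^N\langle f_{n+h}, f_n\rangle\right|.
\]
Summing the squares and using hypothesis \eqref{EquationForMainApplication}, together with the fact that $a_0 = \|\xi\|_{\mathscr{H}}^2 < \infty$ contributes a single finite term, gives $\sum_{h\in\mathbb{Z}}|a_h|^2 < \infty$. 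It is worth stressing that this bound does not depend on the choice of permissible triple: the numbers $a_h$ themselves may vary with $(N_q)$, but the $\ell^2$ estimate is controlled by the fixed quantity in \eqref{EquationForMainApplication}.

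Finally I would apply the Riesz–Fischer theorem to obtain $g \in L^2([0,1],\mathcal{L})$ whose Fourier coefficients (in the normalization used to define spectral measures) are exactly the $a_h$. Since $g \in L^2 \subseteq L^1$, the measure $g\, d\mathcal{L}$ is a finite complex Borel measure on $[0,1]$ with the same Fourier coefficients as $\mu$, and because a finite Borel measure on the circle is determined by its Fourier coefficients, $\mu = g\,d\mathcal{L}$; in particular $\mu \ll \mathcal{L}$. As $P$ was arbitrary, $(f_n)_{n=1}^\infty$ is spectrally Lebesgue, which also shows (via Corollary \ref{NearlyStronglyMixingAveragesTo0}) that this generalizes Theorem \ref{ClassicalvanderCorput'sDifferenceTheorems}(i).

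The argument is short, and the only genuinely delicate point is the bookkeeping around the spectral measure inside $\mathscr{H}$: one must be careful, using Remark \ref{RemarkAboutTerminology} and the construction of $\mathscr{H}$, that the scalars $a_h$ really are the Fourier coefficients of $\mu_{\xi,L_S}$ and that the $\ell^2$ estimate is uniform in the permissible triple. If one wishes to avoid invoking completeness of the trigonometric system, an alternative to the Riesz–Fischer step is to observe that the Fejér (Cesàro) means of the Fourier series of $\mu$ are bounded in $L^2([0,1],\mathcal{L})$ by $\|(a_h)\|_{\ell^2(\mathbb{Z})}$, extract a weakly convergent subsequence in $L^2$, and identify its limit with $\mu$ via weak-$*$ convergence of Fejér means; for brevity I would use Riesz–Fischer.
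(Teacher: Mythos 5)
Your proposal is correct and follows essentially the same route as the paper: both pass from the hypothesis to square-summability of the Fourier coefficients of the spectral measure $\mu_{\xi,L_S}$ (noting that the limit along $(N_q)$ is dominated in absolute value by the limsup in \eqref{EquationForMainApplication}), produce $g \in L^2([0,1],\mathcal{L})$ with those coefficients, and conclude $d\mu = g\,d\mathcal{L}$ by uniqueness of a finite measure with given Fourier coefficients. The only difference is cosmetic (you cite Riesz--Fischer, the paper writes the Fourier series of $g$ directly), so no further comment is needed.
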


\begin{proof}
    Let $P := \left((f_n)_{n = 1}^\infty,(f_n)_{n = 1}^\infty,(N_q)_{q = 1}^\infty\right)$ be a permissible triple and $\mathscr{H} = \mathscr{H}(P)$. Let $\mu$ be a positive measure on $[0,1]$ for which
    
    \begin{equation}
        \hat{\mu}(h) = \lim_{q\rightarrow\infty}\frac{1}{N_q}\sum_{n = 1}^{N_q}\langle f_{n+h}, f_n\rangle = \left\langle (f_{n+h})_{n = 1}^\infty. (f_n)_{n = 1}^\infty\right\rangle_{\mathscr{H}}.
    \end{equation}
    Since the fourier coefficients of $\mu$ are square summable, we may define $g \in L^2([0,1],\mathcal{L})$ by

    \begin{equation}
        g(x) = \sum_{h \in \mathbb{Z}}\hat{\mu}(h)e^{2\pi ihx},
    \end{equation}
    and we see that $\int_0^1e^{-2\pi ihx}gd\mathcal{L}(x) = \hat{\mu}(h) = \int_0^1e^{-2\pi ihx}d\mu(x)$ for all $h \in \mathbb{Z}$. Since a measure is uniquely determined by its Fourier coefficients, we see that $d\mu = gd\mathcal{L}$.
\end{proof}

\begin{remark}
    It is worth mentioning that a sequence $(c_n)_{n = 1}^\infty \subseteq \mathbb{C}$ being square summable is a sufficient but not necessary condition for the existince of a measure $\mu<<\mathcal{L}$ for which $\hat{\mu}(n) = c_n$. It follows that Theorem \ref{StrongMixingClassicalvdC} could, in principle, be generalized by trying to weaken the condition of Equation \eqref{EquationForMainApplication}. However, it is unlikely that condition of Equation \eqref{EquationForMainApplication} can be weakened in an aesthetic manner. To see why, we begin by observing that the main simplification is the use of limit supremums to avoid the technicalities of permissible triples. To see that a similar simplification cannot be made in general, we will show that the property of being the Fourier coefficients of a measure $\mu<<\mathcal{L}$ depends on more than just the magnitude of the coefficients. Consider a strongly mixing measure preserving system $(X,\mathscr{B},\mu,T)$ that has singular spectrum, such as the one constructed in \cite{MixingWithSingularSpectrumFromAFlow}. Let $f \in L^2(X,\mu)$ be such that $\int_Xfd\mu = 0$, and observe that $\lim_{n\rightarrow\infty}\widehat{\mu_{f,T}}(n) = 0$. We can construct a convex sequence of real numbers $c_n$ for which $c_n \ge |\widehat{\mu_{f,t}}(n)|$ and $\lim_{n\rightarrow\infty}c_n = 0$. It is a classical result \cite[Theorem 4.1]{KatznelsonHarmonicAnalysis} that there exists a measure $\nu<<\mathcal{L}$ for which $\hat{\nu}(n) = c_n$.
\end{remark}

While our next result will not be used in this paper, we include it here so that Theorem \ref{StrongMixingClassicalvdC} can be compared to the generalizations of vdCDT found in \cite[Chapter 2.2]{SohailsPhDThesis}.

\begin{lemma}
    Let $\mathcal{H}$ be a Hilbert space and $(f_n)_{n = 1}^\infty,(g_n)_{n = 1}^\infty \in \text{SA}(\mathcal{H})$ be such that $(f_n)_{n = 1}^\infty$ is spectrally Lebesgue. If $P := \left((f_n)_{n = 1}^\infty,(g_n)_{n = 1}^\infty,(N_q)_{q = 1}^\infty\right)$ is a permissible triple, then there exists a complex-valued measure $\mu$ such that $\mu<<\mathcal{L}$ and

     \begin{equation}
         \lim_{q\rightarrow\infty}\frac{1}{N_q}\sum_{n = 1}^{N_q}\langle f_{n+h}, g_n\rangle = \hat{\mu}(h),
     \end{equation}
     for all $h \in \mathbb{N}$.
\end{lemma}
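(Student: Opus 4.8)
The plan is to reduce this statement to the already-established Theorem \ref{StrongMixingClassicalvdC} by passing to the diagonal vector in a Hilbert space $\mathscr{H}$ built from a permissible triple. Given a permissible triple $P := \left((f_n)_{n = 1}^\infty,(g_n)_{n = 1}^\infty,(N_q)_{q = 1}^\infty\right)$, set $\mathscr{H} = \mathscr{H}(P)$ and write $\xi := (f_n)_{n = 1}^\infty$ and $\eta := (g_n)_{n = 1}^\infty$, viewed as vectors in $\mathscr{H}$, with $L_S$ the shift operator. Then the quantity $\lim_{q\rightarrow\infty}\frac{1}{N_q}\sum_{n = 1}^{N_q}\langle f_{n+h},g_n\rangle$ is by definition $\langle L_S^h\xi,\eta\rangle_{\mathscr{H}}$. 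So the question is whether $h\mapsto\langle L_S^h\xi,\eta\rangle_{\mathscr{H}}$ is the sequence of Fourier coefficients of a complex measure absolutely continuous with respect to $\mathcal{L}$.

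The key observation is that, since $(f_n)_{n = 1}^\infty$ is spectrally Lebesgue, Remark \ref{RemarkAboutTerminology} gives $\mu_{\xi,L_S}<<\mathcal{L}$. I would first handle the case where $\eta$ lies in the cyclic subspace $Z_\xi := \overline{\text{Span}}_{\mathbb{C}}\{L_S^h\xi\ |\ h \in \mathbb{Z}\}$. By the spectral theorem there is a unitary isomorphism $Z_\xi \cong L^2([0,1],\mu_{\xi,L_S})$ sending $\xi$ to the constant function $\mathbbm{1}$ and intertwining $L_S$ with multiplication by $e^{2\pi ix}$; under this isomorphism $\eta$ corresponds to some $\phi \in L^2([0,1],\mu_{\xi,L_S})$, and $\langle L_S^h\xi,\eta\rangle_{\mathscr{H}} = \int_0^1 e^{2\pi ihx}\overline{\phi(x)}\,d\mu_{\xi,L_S}(x) = \widehat{\mu}(-h)$ where $d\mu = \overline{\phi}\,d\mu_{\xi,L_S}$. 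Since $\overline{\phi} \in L^1(\mu_{\xi,L_S})$ and $\mu_{\xi,L_S}<<\mathcal{L}$, we get $\mu<<\mathcal{L}$, as desired. For the general $\eta$, write $\eta = \eta_1 + \eta_2$ with $\eta_1 \in Z_\xi$ the orthogonal projection and $\eta_2 \perp Z_\xi$; since $L_S Z_\xi = Z_\xi$ we have $L_S^h\xi \in Z_\xi$, hence $\langle L_S^h\xi,\eta_2\rangle_{\mathscr{H}} = 0$ for all $h$, so $\langle L_S^h\xi,\eta\rangle_{\mathscr{H}} = \langle L_S^h\xi,\eta_1\rangle_{\mathscr{H}}$ and the case already treated applies. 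Finally one should note that the numerical value of the limit does not depend on the choice of permissible triple extending $(f_n),(g_n)$: any two sequences $(N_q)$ and $(N_q')$ making the triple permissible agree on the relevant limits because the statement only asserts existence and $<<\mathcal{L}$ of \emph{some} measure for the given triple.

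The main obstacle, and the only genuinely non-routine point, is verifying that the constant function $\mathbbm{1}$ is a cyclic vector realizing $\mu_{\xi,L_S}$ as its spectral measure and that the isomorphism sends $\xi \mapsto \mathbbm{1}$ — that is, correctly invoking the spectral representation of a single unitary operator restricted to a cyclic subspace. This is standard (it is exactly how $\mu_{f,R}$ is defined in the introduction), but one must be careful that $\mathscr{H}$ as constructed need not be separable, so $Z_\xi$ should be isolated first as a separable, $L_S$-invariant subspace before applying the spectral theorem. An alternative, avoiding the spectral theorem entirely, is to argue directly: the functional $h \mapsto \langle L_S^h\xi,\eta\rangle_{\mathscr{H}}$ is square-summable (apply Theorem \ref{StrongMixingClassicalvdC} and its hypothesis, or use that $\xi$ is spectrally Lebesgue so $\sum_h|\widehat{\mu_{\xi,L_S}}(h)|^2 < \infty$ and dominate the cross term), after which one defines $g(x) = \sum_{h}\langle L_S^h\xi,\eta\rangle_{\mathscr{H}}e^{2\pi ihx} \in L^2([0,1],\mathcal{L})$ and checks $d\mu = g\,d\mathcal{L}$ by matching Fourier coefficients, exactly as in the proof of Theorem \ref{StrongMixingClassicalvdC}; this is the cleaner route and I would present it that way, with the cyclic-subspace picture relegated to a remark explaining why square-summability holds.
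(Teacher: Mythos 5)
Your first argument is essentially correct and is, in substance, the same as the paper's: the paper also projects $\xi_g:=(g_n)_{n=1}^\infty$ onto the cyclic subspace $\mathscr{H}_f=c\ell(\mathrm{Span}_{\mathbb{C}}\{L_S^n\xi_f\})$, the only difference being that instead of invoking the spectral representation $Z_\xi\cong L^2([0,1],\mu_{\xi,L_S})$ it uses the polarization identity to write $\langle L_S^h\xi_f,P\xi_g\rangle_{\mathscr{H}}$ in terms of the four spectral measures $\mu_{\xi_f+aP\xi_g,L_S}$, $a\in\{\pm 1,\pm i\}$, each of which is $\ll\mu_{\xi_f,L_S}\ll\mathcal{L}$ because $\xi_f+aP\xi_g\in\mathscr{H}_f$. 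Your version, with $d\mu=\overline{\phi}\,d\mu_{\xi,L_S}$ and $\phi\in L^2(\mu_{\xi,L_S})\subseteq L^1(\mu_{\xi,L_S})$, reaches the same conclusion (the $\hat{\mu}(-h)$ versus $\hat{\mu}(h)$ discrepancy is a harmless normalization, fixed by reflecting the measure), and your separability caveat is handled exactly as you say, since $Z_\xi$ is generated by countably many vectors.

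The route you say you would actually present, however, has a genuine gap. ``Spectrally Lebesgue'' means only that $\mu_{\xi,L_S}\ll\mathcal{L}$, i.e.\ its density is in $L^1(\mathcal{L})$; it does \emph{not} give $\sum_h|\widehat{\mu_{\xi,L_S}}(h)|^2<\infty$, which would require an $L^2$ density. The hypothesis \eqref{EquationForMainApplication} of Theorem \ref{StrongMixingClassicalvdC} is not available here: that theorem's implication goes from square-summable correlations to spectrally Lebesgue, not conversely, and the remark following it in the paper stresses precisely that square-summability is sufficient but not necessary for absolute continuity. Moreover, even if one did assume the autocorrelations of $\xi$ square-summable, the cross-correlations $\langle L_S^h\xi,\eta\rangle_{\mathscr{H}}$ are the Fourier coefficients of the complex measure $\overline{\phi}\,d\mu_{\xi,L_S}$, whose Lebesgue density $\overline{\phi}\cdot\frac{d\mu_{\xi,L_S}}{d\mathcal{L}}$ is in general only in $L^1(\mathcal{L})$ (Cauchy--Schwarz gives $\int|\phi|\,d\mu_{\xi,L_S}<\infty$ and nothing more, since the density of $\mu_{\xi,L_S}$ may be unbounded), so there is no a priori $\ell^2$ bound to ``dominate the cross term.'' Hence the device of Theorem \ref{StrongMixingClassicalvdC} --- define $g=\sum_h\hat{\mu}(h)e^{2\pi ihx}\in L^2([0,1],\mathcal{L})$ and match Fourier coefficients --- does not transfer to this lemma; the cyclic-subspace/projection argument must remain the actual proof rather than a remark.
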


\begin{proof}
    Let $\mathscr{H} = \mathscr{H}(P)$, $\xi_f = (f_n)_{n = 1}^\infty$, $\xi_g = (g_n)_{n = 1}^\infty$, $\mathscr{H}_f := c\ell(\text{Span}_{\mathbb{C}}(\{L_S^n\xi_f\ |\ n \in \mathbb{Z}\}))$, and $P:\mathscr{H}\rightarrow\mathscr{H}_f$ the orthgonal projection. Using a standard variation of the polarization identity, we see that for all $h \in \mathbb{N}$ we have

    \begin{alignat*}{2}
        4\langle L_S^h\xi_f,P\xi_g\rangle_{\mathscr{H}} = & \langle L_S^h\xi_f+L_S^hP\xi_g,\xi_f+P\xi_g\rangle_{\mathscr{H}}-\langle L_S^h\xi_f-L_S^hP\xi_g,\xi_f-P\xi_g\rangle_{\mathscr{H}}\\
        &+i\langle L_S^h\xi_f+iL_S^hP\xi_g,\xi_f+iP\xi_g\rangle_{\mathscr{H}}-i\langle L_S^h\xi_f-iL_S^hP\xi_g,\xi_f-iP\xi_g\rangle_{\mathscr{H}}.
    \end{alignat*}
    Since $(\langle L_S^h\xi_f+aL_S^hP\xi_g,\xi_f+aP\xi_g\rangle_{\mathscr{H}})_{h = 1}^\infty$ is the fourier coefiecients of the spectral measure $\mu_a := \mu_{\xi_f+aP\xi_g,L_s}$, and $\xi_f+aP\xi_g \in \mathscr{H}_f$, we see that $\mu_a << \mu_{\xi_f,L_S} << \mathcal{L}$. To conclude, we observe that 
    
    \begin{equation}
        \langle L_S^h\xi_f,\xi_g\rangle_{\mathscr{H}} = \langle L_S^h\xi_f,P\xi_g\rangle_{\mathscr{H}}\text{, hence }\mu = \frac{1}{4}(\mu_1-\mu_{-1}+i\mu_i-i\mu_{-i}).
    \end{equation}
\end{proof}
\section{Applications to recurrence and ergodic averages in measure preserving systems}\label{SectionOnMeasurePreservingSystems}
\subsection{Preliminary Results}
In this subsection we collect some known results from the literature that will be used later on. All limits in this subsection converge in $L^2(X,\mu)$.

\begin{theorem}[{(Bergelson \cite[Theorem 1.2]{WMPET})}]
    Suppose $(X,\mathscr{B},\mu,S)$ is a weakly mixing system and $p_1,p_2,\cdots,p_k \in \mathbb{Q}[x]$ are pairwise essentially distinct integer polynomials. Then for any $g_1,g_2,\cdots,g_k \in L^\infty(X,\mu)$, we have

    \begin{equation}
    \lim_{N\rightarrow\infty}\frac{1}{N}\sum_{n = 1}^N\prod_{i = 1}^kS^{p_i(n)}g_i = \prod_{i = 1}^K\int_Xg_id\mu.
\end{equation}
\end{theorem}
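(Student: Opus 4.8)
The plan is first to reduce, by multilinearity, to the following statement: if $\int_X g_j\,d\mu = 0$ for some $1\le j\le k$, then $\lim_{N\to\infty}\bigl\|\frac1N\sum_{n=1}^N\prod_{i=1}^k S^{p_i(n)}g_i\bigr\| = 0$. Granting this, the theorem follows by writing each $g_i = \bigl(g_i-\int_X g_i\,d\mu\bigr)+\int_X g_i\,d\mu$ and expanding the product: the term coming from the constants contributes $\prod_i\int_X g_i\,d\mu$ exactly, while every other term has at least one mean-zero factor and hence contributes $0$ in the limit (any sub-collection of pairwise essentially distinct non-constant polynomials is again of that form). From now on I normalize $\|g_i\|_\infty\le 1$, assume $\int_X g_j\,d\mu = 0$ for some $j$, and take each $p_i$ non-constant, as the hypothesis must intend, since a constant $p_i$ would let a nonzero-mean, non-constant factor escape the limit.

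For the reduced statement I would run the standard PET induction, i.e.\ induction on the \emph{type} of the family $\{p_1,\dots,p_k\}$: after identifying two polynomials whenever their difference is constant, record how many resulting classes have each degree, and order the resulting finite degree-profile vectors lexicographically starting from the largest degree. The base case is a single non-constant polynomial $p$ with a mean-zero $g$, for which $\frac1N\sum_n S^{p(n)}g\to 0$; when $\deg p = 1$ this is the von Neumann mean ergodic theorem applied to the ergodic power $S^a$ (every power of a weakly mixing transformation being ergodic), and when $\deg p\ge 2$ it follows by a secondary induction on $\deg p$ via van der Corput, since $\langle S^{p(n+h)}g,S^{p(n)}g\rangle = \langle S^{p(n+h)-p(n)}g,g\rangle$ is governed by a polynomial of degree $\deg p-1$ in $n$ and, as weak mixing forces the spectral measure $\mu_{g,S}$ to be continuous, Weyl's equidistribution theorem (or, in the degenerate case $\deg p-1 = 0$, Wiener's theorem applied to $\widehat{\mu_{g,S}}(ah)$) makes the hypothesis of Theorem~\ref{ClassicalvanderCorput'sDifferenceTheorems}(iii) hold. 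For the inductive step, set $x_n = \prod_{i=1}^k S^{p_i(n)}g_i$ and, after a suitable relabeling, apply the measure-preserving power $S^{-p_1(n)}$ inside the inner product to get
\[ \langle x_{n+h},\, x_n\rangle = \int_X \prod_{i=1}^k S^{p_i(n+h)-p_1(n)}g_i \cdot \overline{\prod_{i=1}^k S^{p_i(n)-p_1(n)}g_i}\; d\mu, \]
which is an average of the same shape in the variable $n$ attached to the new family $\mathcal{A}_h := \{p_i(n+h)-p_1(n)\}_i\cup\{p_i(n)-p_1(n)\}_i$, from which the zero polynomial $p_1-p_1$ is removed. The combinatorial core of PET is that $\mathcal{A}_h$ has strictly smaller type than $\{p_1,\dots,p_k\}$ for all large $h$, so by the inductive hypothesis (valid since $S$ remains weakly mixing, with any constant polynomials appearing in $\mathcal{A}_h$ absorbed into fixed vectors) the limit $\lim_N\frac1N\sum_n\langle x_{n+h},x_n\rangle$ exists in closed form; one then checks that $\lim_{H\to\infty}\frac1H\sum_{h=1}^H\bigl|\lim_N\frac1N\sum_n\langle x_{n+h},x_n\rangle\bigr| = 0$, continuity of the relevant spectral measures annihilating the surviving contributions, and concludes via Theorem~\ref{ClassicalvanderCorput'sDifferenceTheorems}(iii).

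The step I expect to be the main obstacle is the PET bookkeeping itself: choosing the type so that the induction is well-founded, and proving rigorously that the differencing operation $\{p_i(n)\}_i\mapsto\mathcal{A}_h$ strictly lowers it, which requires handling the finitely many exceptional values of $h$ where extra cancellation occurs and always being able to discard the zero polynomial. A second subtlety is propagating the hypotheses through the recursion: weak mixing of $S$ passes to every auxiliary average automatically, but the ``at least one mean-zero factor'' input is, after one differencing, replaced by the requirement that the iterated $h$-average vanish, and it is precisely here that one must use continuity of $\mu_{g,S}$ (equivalently, the absence of nontrivial eigenvalues for $S$) rather than mere ergodicity. Once the type is correctly defined and shown to decrease, everything else is routine manipulation with van der Corput's inequality and Weyl's theorem.
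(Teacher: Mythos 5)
This statement is not proved in the paper at all: it is quoted verbatim from Bergelson's weakly mixing PET theorem and used as a black box, so there is no internal proof to compare against. Your sketch is essentially Bergelson's original argument: reduce by multilinearity to the case where some $g_j$ has zero mean, then run a van der Corput/PET induction on the type of the polynomial family, using weak mixing (continuity of the spectral measures of mean-zero functions, hence Wiener-type decay of averaged correlations such as $\frac{1}{H}\sum_{h=1}^{H}\left|\langle S^{ah}g,g\rangle\right|$) to kill the terms that survive the differencing. The outline is correct, and your reading that each $p_i$ must be non-constant is the right interpretation of the hypothesis. The parts you would still have to write out are exactly the ones you flag: the well-foundedness and strict decrease of the PET type under $\{p_i\}\mapsto\mathcal{A}_h$, including the choice of which polynomial to subtract and the finitely many exceptional $h$ (harmless for the Ces\`aro average in $h$); and, a point worth making explicit, that the inductive hypothesis should be upgraded, via the same multilinearity reduction, to the full product formula for families of smaller type, since evaluating $\lim_N\frac{1}{N}\sum_n\langle x_{n+h},x_n\rangle$ in closed form requires weak convergence of the smaller-type averages and not merely the mean-zero vanishing statement.
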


We recall that a collection of integer polynomials $p_1,\cdots,p_k \in \mathbb{Q}[x]$ is \textit{independent} if any nontrivial rational linear combination of them is not constant. 

\begin{theorem}[{(Frantzikinakis, Kra \cite[Theorem 1.1]{PolynomialAveragesConverge})}]\label{PolynomialAveragesConvergeForSingleT}
Let $(X,\mathscr{B},\mu,S)$ be a totally ergodic system, and let $p_1,\cdots,p_k \in \mathbb{Q}[x]$ be a collection of independent integer polynomials. Then for $g_1,\cdots,g_K \in L^\infty(X,\mu)$, we have

\begin{equation}
    \lim_{N\rightarrow\infty}\frac{1}{N}\sum_{n = 1}^N\prod_{i = 1}^kS^{p_i(n)}g_i = \prod_{i = 1}^K\int_Xg_id\mu.
\end{equation}
\end{theorem}

A function $a \in \mathcal{HF}$ has \textit{polynomial growth} if there exists a $d \in \mathbb{N}$ for which $\lim_{t\rightarrow\infty}a(t)t^{-d} = 0$.

\begin{theorem}[{(Tsinas \cite[Theorem 1.2]{JointErgodicityForHardyFieldSequences})}]\label{JointErgodicityForHardy}
     Let $a_1,\cdots,a_k \in \mathcal{HF}$ have polynomial growth and assume that if $a$ is a nontrivial $\mathbb{R}$-linear combination of $\{a_i\}_{i = 1}^k$, then

     \begin{equation}
         \lim_{t\rightarrow\infty}\frac{\left|a(t)-\alpha p(t)\right|}{\log(t)} = \infty\text{ for any }p(t) \in \mathbb{Z}[t], \alpha \in \mathbb{R}. 
     \end{equation}
     Then for any ergodic m.p.s. $(X,\mathscr{B},\mu,T)$ and functions $f_1,\cdots,f_k \in L^\infty(X,\mu)$, we have

     \begin{equation}
        \lim_{N\rightarrow\infty}\frac{1}{N}\sum_{n = 1}^N\prod_{i = 1}^kT^{\lfloor a_i(n)\rfloor}g_i = \prod_{i = 1}^k\int_Xg_id\mu.
    \end{equation}
\end{theorem}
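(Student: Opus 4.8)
The plan is to deduce this from a \emph{joint ergodicity criterion} in the style of Frantzikinakis, which reduces the $L^2$ convergence of the averages $\frac1N\sum_{n=1}^N\prod_{i=1}^kT^{\lfloor a_i(n)\rfloor}g_i$ to two essentially separate phenomena: an equidistribution statement about the real functions $a_i$ themselves, and a Host--Kra--Gowers seminorm estimate. Write $b_i(n)=\lfloor a_i(n)\rfloor$ and $w_n=\prod_{i=1}^kT^{b_i(n)}g_i$. The criterion asserts that $\frac1N\sum_{n=1}^Nw_n\to\prod_i\int_Xg_i\,d\mu$ in $L^2$ for every ergodic $(X,\mathscr{B},\mu,T)$ and all $g_i\in L^\infty$ provided (a) every nontrivial integer combination $\sum_it_ia_i$, $(t_1,\dots,t_k)\in\mathbb{Z}^k\setminus\{0\}$, is uniformly distributed modulo $1$ along $\mathbb{N}$ (together with the mild multidimensional variant needed to pin down the contribution of the Kronecker factor), and (b) there is $s\in\mathbb{N}$ such that whenever the seminorm $\|g_j\|_{U^s(X,T)}$ vanishes for some $j$, one has $\frac1N\sum_{n=1}^Nw_n\to0$. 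So the work splits into verifying (a) and (b).

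For step (a): a nontrivial combination $a:=\sum_it_ia_i$ again lies in some $\mathcal{HF}\supseteq\mathcal{LE}$ and has polynomial growth, and the hypothesis says exactly that $|a(t)-\alpha p(t)|/\log t\to\infty$ for all $p\in\mathbb{Z}[t]$, $\alpha\in\mathbb{R}$. Boshernitzan's equidistribution theorem for Hardy functions then yields that $(a(n))_n$ is uniformly distributed modulo $1$; applying the same reasoning in several variables (multidimensional Weyl) handles the degenerate combinations and shows that the Kronecker component of the averages already converges to $\prod_i\int_Xg_i\,d\mu$. One also invokes the standard lemma that replacing $a_i$ by $\lfloor a_i\rfloor$ preserves these properties, since $\{a_i(n)\}$ is itself equidistributed and hence $\lfloor a_i(n)\rfloor$ equidistributes through any finite residue system.

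Step (b) is the heart of the matter and is handled by a van der Corput plus PET-induction argument adapted to Hardy fields. Starting from $\|\frac1N\sum_nw_n\|^2$, apply Theorem~\ref{ClassicalvanderCorput'sDifferenceTheorems}(i) (or its quantitative form): it suffices to control, for each fixed $h$, the average of $\langle w_{n+h},w_n\rangle$, which after composing with $T^{-b_k(n)}$ becomes a $2k$-fold multiple average whose moving exponents are the new Hardy-type sequences $b_i(n+h)-b_k(n)$ and $b_i(n)-b_k(n)$ and which carries one factor $\overline{g_k}$ with no shift. One assigns to a tuple of Hardy germs a \emph{complexity} (a ``type'' in Bergelson's PET sense), shows that a carefully chosen van der Corput step strictly decreases it, and iterates; the induction terminates at base cases that are essentially a single Hardy exponent $b(n)$, where $\frac1N\sum_nT^{b(n)}g$ is estimated directly via the spectral theorem, its squared norm being $\int|\frac1N\sum_ne(b(n)\theta)|^2\,d\mu_{g,T}(\theta)\to\mu_{g,T}(\{0\})=|\int_Xg\,d\mu|^2$ because $b(n)\theta$ equidistributes for $\theta\neq0$ (Boshernitzan again, for the derived functions). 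Tracking the number of van der Corput steps gives the estimate in terms of $\|g_j\|_{U^s}$ for an explicit $s$; combining with (a) through the criterion gives the theorem.

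The main obstacle is step (b), and within it the PET induction over Hardy fields: one must design a complexity measure on finite tuples of germs that is well behaved under the operation $a_i(t)\mapsto a_i(t+h)-a_j(t)$, that strictly decreases under an appropriately chosen such operation, and whose base cases can actually be estimated. The difficulty is that Hardy functions have no intrinsic ``degree'' as polynomials do, so one must argue with growth rates and derivatives (using facts like $a(t+h)-a(t)\sim h\,a'(t)$ for sublinear derivative, and van der Corput's lemma for exponential sums), and the floor functions must be absorbed --- typically by approximating $a_i$ by its Taylor expansion on short subintervals $[N,N+N^{\delta}]$ and noting that $\lfloor a_i(n+h)\rfloor-\lfloor a_i(n)\rfloor$ differs from $a_i(n+h)-a_i(n)$ by at most $1$ while the genuine oscillation comes from the fractional parts. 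Once the complexity bookkeeping and the base-case equidistribution estimates are in place, the criterion assembles everything into the stated conclusion.
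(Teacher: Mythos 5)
This statement is not proved in the paper at all: it is quoted verbatim as an external result of Tsinas (\cite[Theorem 1.2]{JointErgodicityForHardyFieldSequences}) and used as a black box in Section \ref{SectionOnMeasurePreservingSystems}, so there is no internal proof to compare against. Your outline does track the strategy actually used in the literature for results of this type: reduce, via a Frantzikinakis-style joint ergodicity criterion, to (a) equidistribution of nontrivial linear combinations of the $a_i$ (Boshernitzan) plus identification of the Kronecker/rational contribution, and (b) a Host--Kra seminorm estimate obtained by van der Corput together with a PET-type induction adapted to Hardy field germs, with the integer parts absorbed by short-interval Taylor approximation. So the roadmap is the right one.

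However, as a proof it has a genuine gap, and you identify it yourself: step (b) is only described, not carried out. Everything that makes the theorem nontrivial lives there --- the definition of a complexity (type) on finite tuples of Hardy germs that is stable under the operation $a_i(t)\mapsto a_i(t+h)-a_j(t)$, the verification that a suitable van der Corput step strictly decreases it and that the induction terminates at estimable base cases, the quantitative handling of the floor functions and of the error terms $a(t+h)-a(t)-h\,a'(t)$ on the short blocks, and the bookkeeping that converts the terminal cases into a bound by $\|g_j\|_{U^s}$ for a fixed $s$. None of this is routine for Hardy functions precisely because, as you note, there is no intrinsic degree; the growth-rate dichotomies (e.g.\ whether $a'$ tends to $0$, to a constant, or to infinity, and whether $a$ stays log-close to a real multiple of an integer polynomial) must be threaded through the induction, and this is where the hypothesis $\lim_t|a(t)-\alpha p(t)|/\log t=\infty$ is actually consumed --- it is needed for the seminorm/characteristic-factor step and for the base-case spectral estimates, not merely for Weyl equidistribution of integer combinations as in your step (a). In addition, the joint ergodicity criterion itself requires equidistribution statements for combinations of the integer-part sequences $\lfloor a_i(n)\rfloor$ evaluated against all eigenvalues, which is subtler than the one-line reduction you give. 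So the proposal should be regarded as a correct plan that defers, rather than supplies, the core argument; completing it amounts to reproving Tsinas's theorem, which is why the paper simply cites it.
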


\begin{theorem}[{(Tsinas \cite[Corollary 1.4]{JointErgodicityForHardyFieldSequences})}]\label{WeakMixingHardyPET}
    Assume that the functions $a_1,\cdots,a_k \in \mathcal{HF}$ have polynomial growth and satisfy

    \begin{alignat*}{2}
        &\lim_{t\rightarrow\infty}\frac{|a_i(t)|}{\log(t)} = \infty\text{ for all }1 \le i \le k\text{, and}\\
        &\lim_{t\rightarrow\infty}\frac{|a_i(t)-a_j(t)|}{\log(t)} = \infty\text{ for all }1 \le i < j \le k.
    \end{alignat*}
    Then for any weakly mixing m.p.s. $(X,\mathscr{B},\mu,T)$, we have

    \begin{equation}
        \lim_{N\rightarrow\infty}\frac{1}{N}\sum_{n = 1}^N\prod_{i = 1}^kT^{\lfloor a_i(n)\rfloor}g_i = \prod_{i = 1}^k\int_Xg_id\mu.
    \end{equation}
\end{theorem}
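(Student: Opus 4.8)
The plan is to prove Theorem \ref{WeakMixingHardyPET} by a van der Corput / PET induction, using Theorem \ref{ClassicalvanderCorput'sDifferenceTheorems}(i) as the only dynamical input and classical equidistribution of Hardy sequences as the analytic input. \textbf{Step 1 (multilinear reduction).} Writing each $g_i = \int_X g_i\,d\mu + g_i'$ with $\int_X g_i'\,d\mu = 0$ and expanding the product $\prod_{i=1}^k T^{\lfloor a_i(n)\rfloor}g_i$, the all-constant term contributes exactly $\prod_{i=1}^k\int_X g_i\,d\mu$, while every other term is a scalar multiple of an average of the same shape, indexed by a nonempty sub-tuple of $(a_1,\dots,a_k)$ — which still satisfies the hypotheses — in which every remaining factor has zero integral. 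Hence it suffices to show that if $(a_1,\dots,a_k)$ satisfies the hypotheses and $\int_X g_{i_0}\,d\mu = 0$ for some $i_0$, then $\frac1N\sum_{n=1}^N\prod_{i=1}^k T^{\lfloor a_i(n)\rfloor}g_i \to 0$ in $L^2(X,\mu)$.

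\textbf{Step 2 (induction via van der Corput).} Put $x_n = \prod_{i=1}^k T^{\lfloor a_i(n)\rfloor}g_i$, a bounded sequence in $L^2(X,\mu)$, so by Theorem \ref{ClassicalvanderCorput'sDifferenceTheorems}(i) it is enough that $\frac1N\sum_{n=1}^N\langle x_{n+h},x_n\rangle\to 0$ for every $h\in\mathbb N$. Since $T$ is measure preserving, applying the change of variables $T^{-\lfloor a_{i_0}(n)\rfloor}$ inside each integral and interchanging the $n$-average with the integral gives
\[
\frac1N\sum_{n=1}^N\langle x_{n+h},x_n\rangle \;=\; \Big\langle\, \frac1N\sum_{n=1}^N\Big(\prod_{i=1}^k T^{\lfloor a_i(n+h)\rfloor-\lfloor a_{i_0}(n)\rfloor}g_i\Big)\Big(\prod_{i\neq i_0} T^{\lfloor a_i(n)\rfloor-\lfloor a_{i_0}(n)\rfloor}\,\overline{g_i}\Big)\,,\ g_{i_0}\,\Big\rangle,
\]
the $i=i_0$ factor $\overline{g_{i_0}}$ of the differenced product having been pulled out as the second argument of the inner product. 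For fixed $h$ the inner average is again a multiple ergodic average of the same type, built from the $(2k-1)$-tuple of Hardy functions $a_i(t+h)-a_{i_0}(t)$ $(1\le i\le k)$ and $a_i(t)-a_{i_0}(t)$ $(i\neq i_0)$, with bounded coefficients. One then introduces a well-founded complexity on a suitable class of admissible tuples (a ``growth-vector'' invariant, in the spirit of Bergelson's weakly mixing PET and its Hardy-field extension by Frantzikinakis) relative to which this new tuple lies in the class and has \emph{strictly smaller} complexity; by the inductive hypothesis its average converges in $L^2$ to the product of the integrals of its factors, a constant $C$, whence $\frac1N\sum_{n=1}^N\langle x_{n+h},x_n\rangle \to C\,\overline{\int_X g_{i_0}\,d\mu} = 0$ for every $h$, and Theorem \ref{ClassicalvanderCorput'sDifferenceTheorems}(i) then gives $\frac1N\sum_{n=1}^N x_n\to 0$. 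The base case is a single factor $T^{\lfloor b(n)\rfloor}g$ with $b$ of polynomial growth, $|b(t)|/\log t\to\infty$, and $\int_X g\,d\mu = 0$: by the spectral theorem $\big\|\frac1N\sum_{n\le N}T^{\lfloor b(n)\rfloor}g\big\|^2 = \int_{[0,1]}\big|\frac1N\sum_{n\le N}e^{2\pi i\lfloor b(n)\rfloor\theta}\big|^2\,d\mu_{g,T}(\theta)$, and since $T$ is weakly mixing $\mu_{g,T}$ is atomless, so $\mu_{g,T}$-a.e.\ $\theta$ is irrational and for such $\theta$ the inner Weyl average tends to $0$ by known equidistribution results for Hardy sequences; bounded convergence then closes the base case.

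\textbf{The main obstacle} is the bookkeeping in Step 2: designing the admissible class and the complexity so that complexity provably drops under the operation $(a_i)_i\mapsto(a_i(t+h)-a_{i_0}(t))_i$ \emph{for every} $h\in\mathbb N$. The naive invariant ``each function and each pairwise difference grows faster than $\log t$'' is not preserved — for instance $(t+h)^{1/2}-t^{1/2}\to 0$, and subtler near-cancellations can arise among the differenced functions — so one must work with the refined ordering of Hardy growth rates, reclassify as new bounded coefficients those factors whose displacement becomes bounded (absorbed just as $\overline{g_{i_0}}$ was above), and merge functions that become ``equivalent''. One must also deal with the floors, e.g.\ via $\lfloor a(n+h)\rfloor-\lfloor a(n)\rfloor = a(n+h)-a(n)+O(1)$ together with control of the fractional parts $\{a(n)\}$ along the Hardy field. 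Finally, one cannot simply quote Theorem \ref{JointErgodicityForHardy} here: its hypothesis rules out every nontrivial $\mathbb R$-linear relation among $a_1,\dots,a_k$ (and relations with integer polynomials), whereas the present hypothesis permits, say, $a_2 = 2a_1$, so the passage from ``pairwise essentially distinct $+$ weakly mixing'' to ``independent $+$ ergodic'' is precisely what the PET induction must supply.
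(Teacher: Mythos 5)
First, a point of comparison: the paper does not prove Theorem \ref{WeakMixingHardyPET} at all — it is quoted from Tsinas (\cite[Corollary 1.4]{JointErgodicityForHardyFieldSequences}) as a black box in the preliminaries — so there is no internal proof to match your argument against; it has to stand on its own. As it stands, it does not: the core of your Step 2, the well-founded PET complexity under which the van der Corput differencing strictly decreases, is exactly what you leave open, and under the stated hypotheses it is not a bookkeeping issue but an obstruction of principle. The hypotheses only require each $a_i$ and each difference $a_i-a_j$ to dominate $\log t$, so functions with $a_i(t+h)-a_i(t)\to 0$ for every fixed $h$ (e.g.\ $t^{1/2}$, or $\log^2 t$) are admissible. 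For such functions the hypothesis of Theorem \ref{ClassicalvanderCorput'sDifferenceTheorems}(i) is simply false for the sequence $x_n=\prod_i T^{\lfloor a_i(n)\rfloor}g_i$, even though the desired conclusion is true. Concretely, take $k=2$, $a_1(t)=t^{1/2}$, $a_2(t)=t^{3/4}$, $\int_X g_1\,d\mu=0$, $\int_X g_2\,d\mu\neq 0$: for fixed $h$ one has $\lfloor a_i(n+h)\rfloor=\lfloor a_i(n)\rfloor$ for all $n$ outside a set of density zero, so $\frac1N\sum_{n\le N}\langle x_{n+h},x_n\rangle$ converges to $\int_X|g_1|^2d\mu\int_X|g_2|^2d\mu>0$ (this limit can be computed from the theorem itself applied to $|g_1|^2,|g_2|^2$), not to $0$. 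Hence no choice of complexity, and no reclassification of degenerate factors as bounded coefficients, can make a fixed-$h$ vdC induction close in the slowly-growing range; that range is precisely why the actual proofs in the literature use different machinery (van der Corput with shifts $h$ ranging over windows growing with $N$, and Taylor approximation of the Hardy functions by polynomials on short intervals combined with polynomial seminorm estimates — this is how Tsinas deduces Corollary 1.4 from his joint-ergodicity criteria), rather than a direct global PET on the $a_i$.

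Two smaller caveats in the same direction: your inductive hypothesis is stated for iterates of the form $\lfloor c(n)\rfloor$ with $c$ a Hardy function, but the differenced exponents $\lfloor a_i(n+h)\rfloor-\lfloor a_{i_0}(n)\rfloor$ are only such floors up to $O(1)$ errors, so the induction needs a formulation robust under bounded error sequences (you flag this but do not resolve it); and the base case's claim that $\frac1N\sum_{n\le N}e^{2\pi i\lfloor b(n)\rfloor\theta}\to 0$ for \emph{every} irrational $\theta$ and every admissible $b$ is itself a nontrivial equidistribution statement in the slowly-growing regime and needs a citation or proof, though that part is patchable. Note also that your closing remark is correct that Theorem \ref{JointErgodicityForHardy} cannot be invoked here (it forbids relations like $a_2=2a_1$ that the present hypotheses allow), which is further evidence that the missing PET/short-interval input is the actual substance of the theorem rather than a detail.
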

\subsection{Main Results}
\begin{theorem}
\label{PartiallyAnswersNFMain}
Let $(X,\mathscr{B},\mu)$ be a probability space and let $T,S:X\rightarrow X$ be measure preserving automorphisms. Let $(k_n)_{n = 1}^{\infty} \subseteq \mathbb{N}$ be a sequence for which $((k_{n+h}-k_n)\alpha)_{n = 1}^{\infty}$ is uniformly distributed in the orbit closure of $\alpha$ for all $\alpha \in \mathbb{R}$ and $h \in \mathbb{N}$. 
\begin{enumerate}[(i)]
\item For any $f,g \in L^\infty(X,\mu)$ and $\mu_{f,T}\perp\mathcal{L}$, then

\begin{equation}
    \lim_{N\rightarrow\infty}\frac{1}{N}\sum_{n = 1}^NT^nf\cdot S^{k_n}g = \mathbb{E}[f|\mathcal{I}_T]\mathbb{E}[g|\mathcal{I}_S],
\end{equation}
with convergence taking place in $L^2(X,\mu)$. 

\item If $A\in \mathscr{B}$ $\mu_{\mathbbm{1}_A,T}\perp\mathcal{L}$ then

\begin{equation}
    \lim_{N\rightarrow\infty}\frac{1}{N}\sum_{n = 1}^N\mu\left(A\cap T^{-n}A\cap S^{-k_n}A\right) \ge \mu(A)^3.
\end{equation}
\item If we only assume that $((k_{n+h}-k_n)\alpha)_{n = 1}^{\infty}$ is uniformly distributed for all $\alpha \in \mathbb{R}\setminus\mathbb{Q}$ and $h \in \mathbb{N}$, then (i) holds when we further assume that $\mathbb{E}[g|\mathcal{K}_{\text{rat}}(S)] = \mathbb{E}[g|\mathcal{I}_S]$ and (ii) holds when we further assume that $\mathbb{E}[\mathbbm{1}_A|\mathcal{K}_{\text{rat}}(S)] = \mathbb{E}\left[\mathbbm{1}_A|\mathcal{I}_S\right]$.
\end{enumerate}
\end{theorem}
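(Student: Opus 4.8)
The plan is to deduce the theorem from the spectral machinery of Section~\ref{HilbertianvdCSection} after peeling off the invariant parts of $f$ and $g$. First I would write $f = \mathbb{E}[f|\mathcal{I}_T] + f_0$ and $g = \mathbb{E}[g|\mathcal{I}_S] + g_0$, so that $\mathbb{E}[f_0|\mathcal{I}_T] = \mathbb{E}[g_0|\mathcal{I}_S] = 0$ and all four functions lie in $L^\infty(X,\mu)$. Because $L^2(\mathcal{I}_T,\mu)$ and its orthogonal complement are both $T$-invariant, $\mu_{f,T} = \mu_{f_0,T} + \mu_{\mathbb{E}[f|\mathcal{I}_T],T}$, so the hypothesis $\mu_{f,T}\perp\mathcal{L}$ forces $\mu_{f_0,T}\perp\mathcal{L}$. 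Expanding $\frac{1}{N}\sum_{n=1}^N T^nf\cdot S^{k_n}g$ by bilinearity produces four terms: the one built from $\mathbb{E}[f|\mathcal{I}_T]$ and $\mathbb{E}[g|\mathcal{I}_S]$ equals $\mathbb{E}[f|\mathcal{I}_T]\mathbb{E}[g|\mathcal{I}_S]$ for every $N$ (both transformations fix invariant functions), and the plan is to show that each of the other three tends to $0$ in $L^2(X,\mu)$.

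Two of these are handled by soft arguments. The term $\mathbb{E}[g|\mathcal{I}_S]\cdot\frac{1}{N}\sum_{n=1}^N T^nf_0$ tends to $0$ by the mean ergodic theorem, using that $\mathbb{E}[g|\mathcal{I}_S]\in L^\infty$. For $\mathbb{E}[f|\mathcal{I}_T]\cdot\frac{1}{N}\sum_{n=1}^N S^{k_n}g_0$, I would invoke the spectral theorem for $S$ to write $\big\|\frac{1}{N}\sum_{n=1}^N S^{k_n}g_0\big\|^2 = \int_{[0,1]}\big|\frac{1}{N}\sum_{n=1}^N e^{2\pi i k_n t}\big|^2\,d\mu_{g_0,S}(t)$; for each $t\neq 0$, van der Corput's difference theorem (Theorem~\ref{UniformDistributionvanderCorput'sDifferenceTheorem} when $t$ is irrational, and its elementary analogue in the finite cyclic group $\overline{\{nt\mid n\in\mathbb{N}\}}$ when $t$ is rational, both applied to the hypothesis on $(k_{n+h}-k_n)t$) makes $(k_nt)$ uniformly distributed in the orbit closure of $t$, so $\frac{1}{N}\sum_{n=1}^N e^{2\pi i k_n t}\to 0$; since $\mu_{g_0,S}(\{0\}) = \|\mathbb{E}[g_0|\mathcal{I}_S]\|^2 = 0$, dominated convergence gives $\frac{1}{N}\sum_{n=1}^N S^{k_n}g_0\to 0$ in $L^2$.

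The heart of the proof is the term $\frac{1}{N}\sum_{n=1}^N T^nf_0\cdot S^{k_n}g_0$. By Lemma~\ref{CreatingWeaklyRigidSequences} and Remark~\ref{RemarkAboutTerminology}, $(T^nf_0)_{n=1}^\infty\in\text{UB}(L^2(X,\mu))$ is spectrally singular, since $\mu_{f_0,T}\perp\mathcal{L}$. The key claim is that $(S^{k_n}g_0)_{n=1}^\infty$ is spectrally Lebesgue: from $\langle S^{k_{n+h}}g_0, S^{k_n}g_0\rangle = \widehat{\mu_{g_0,S}}(k_{n+h}-k_n) = \int_{[0,1]}e^{2\pi i(k_{n+h}-k_n)t}\,d\mu_{g_0,S}(t)$, the uniform distribution of $((k_{n+h}-k_n)t)_n$ in the orbit closure of $t$ (for $t\neq 0$) together with $\mu_{g_0,S}(\{0\}) = 0$ and dominated convergence gives $\lim_{N\to\infty}\frac{1}{N}\sum_{n=1}^N\langle S^{k_{n+h}}g_0, S^{k_n}g_0\rangle = 0$ for every $h\in\mathbb{N}$; hence each term of the series in~\eqref{EquationForMainApplication} vanishes and Theorem~\ref{StrongMixingClassicalvdC} applies. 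Lemma~\ref{StrongMixingTimesRigidNormAveragesTo0}, applied with $(f_n) = (S^{k_n}g_0)$ and $(g_n) = (T^nf_0)$, then gives $\big\|\frac{1}{N}\sum_{n=1}^N T^nf_0\cdot S^{k_n}g_0\big\|\to 0$, and summing the four contributions proves~(i). I expect this verification that $(S^{k_n}g_0)$ is spectrally Lebesgue to be the main obstacle; it is exactly the situation Theorem~\ref{StrongMixingClassicalvdC} was built to handle, converting the van der Corput hypothesis on the differences $(k_{n+h}-k_n)$ into absolute continuity of a spectral measure. The remaining work is bookkeeping: tracking which permissible triples occur and verifying the measure-theoretic inputs $\mu_{g_0,S}(\{0\}) = 0$ and, for~(iii), the absence of rational atoms.

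For~(ii) I would apply~(i) with $f = g = \mathbbm{1}_A$ (legitimate since $\mu_{\mathbbm{1}_A,T}\perp\mathcal{L}$): using $\mathbbm{1}_{T^{-n}A} = T^n\mathbbm{1}_A$ and $\mathbbm{1}_{S^{-k_n}A} = S^{k_n}\mathbbm{1}_A$, one gets $\frac{1}{N}\sum_{n=1}^N\mu(A\cap T^{-n}A\cap S^{-k_n}A) = \big\langle\frac{1}{N}\sum_{n=1}^N (T^n\mathbbm{1}_A)(S^{k_n}\mathbbm{1}_A),\mathbbm{1}_A\big\rangle\to\int_X\mathbbm{1}_A\,\mathbb{E}[\mathbbm{1}_A|\mathcal{I}_T]\,\mathbb{E}[\mathbbm{1}_A|\mathcal{I}_S]\,d\mu$, and it then remains to invoke the elementary inequality $\int_X\mathbbm{1}_A\,\mathbb{E}[\mathbbm{1}_A|\mathcal{I}_T]\,\mathbb{E}[\mathbbm{1}_A|\mathcal{I}_S]\,d\mu\ge\mu(A)^3$, which rests only on the nonnegativity of the two conditional expectations, their common integral $\mu(A)$, and Cauchy--Schwarz/Jensen-type estimates. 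For~(iii), the weaker hypothesis still yields $\frac{1}{N}\sum_{n=1}^N e^{2\pi i(k_{n+h}-k_n)t}\to 0$ and $\frac{1}{N}\sum_{n=1}^N e^{2\pi i k_n t}\to 0$ for every \emph{irrational} $t$, while the assumption $\mathbb{E}[g|\mathcal{K}_{\text{rat}}(S)] = \mathbb{E}[g|\mathcal{I}_S]$ gives $\mathbb{E}[g_0|\mathcal{K}_{\text{rat}}(S)] = 0$, so $g_0$ is orthogonal to every rational eigenfunction of $S$ and $\mu_{g_0,S}$ has no atoms at rational points; hence $\mu_{g_0,S}$ is concentrated on the irrationals, which is all that is needed to rerun the dominated convergence steps above, with the analogous remark (for $\mathbbm{1}_A$) handling the recurrence statement.
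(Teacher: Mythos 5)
Your proposal is correct and follows essentially the same route as the paper: the correlation limits of $(S^{k_n}g_0)$ vanish by the uniform distribution hypothesis together with $\mu_{g_0,S}(\{0\})=0$ (no rational atoms in case (iii)), so Theorem \ref{StrongMixingClassicalvdC}, Lemma \ref{CreatingWeaklyRigidSequences} and Lemma \ref{StrongMixingTimesRigidNormAveragesTo0} handle the main term, and part (ii) rests on the inequality $\int_X h\,\mathbb{E}[h|\mathcal{I}_T]\,\mathbb{E}[h|\mathcal{I}_S]\,d\mu \ge \left(\int_X h\,d\mu\right)^3$, which the paper simply cites (Chu's Lemma 1.6) rather than rederiving. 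The only cosmetic difference is that you split both $f$ and $g$, producing an extra cross term that you correctly dispose of via the scalar van der Corput argument, whereas the paper splits only $g$ and feeds the whole $f$ into Lemma \ref{StrongMixingTimesRigidNormAveragesTo0} and the mean ergodic theorem.
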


For the proofs of (i)-(iii) all limits of sequences of vectors in $L^2(X,\mu)$ will be with respect to norm convergence in $L^2(X,\mu)$.

\begin{proof}[Proof of (i)]
Let $g' = g-\mathbb{E}[g|\mathcal{I}_S]$. We will use Theorem \ref{StrongMixingClassicalvdC} to show that $(S^{k_n}g')_{n = 1}^{\infty}$ is spectrally Lebesgue. Recall that $\widehat{\nu_{g',S}}(n) = \langle S^ng',g'\rangle$ for all $n \in \mathbb{N}$, and observe that

\begin{alignat*}{2}
\label{BergelsonianHerglotzEquation}
    &\lim_{N\rightarrow\infty}\frac{1}{N}\sum_{n = 1}^N\langle S^{k_{n+h}}g',S^{k_n}g'\rangle = \lim_{N\rightarrow\infty}\frac{1}{N}\sum_{n = 1}^N\langle S^{k_{n+h}-k_n}g',g'\rangle\numberthis\\
    = & \lim_{N\rightarrow\infty}\frac{1}{N}\sum_{n = 1}^N\widehat{\nu_{g',S}}\left(k_{n+h}-k_n\right) = \lim_{N\rightarrow\infty}\frac{1}{N}\sum_{n = 1}^N\int_0^1e^{2\pi i(k_{n+h}-k_n)x}d\nu_{g',S}(x)\\
    = & \int_0^1\lim_{N\rightarrow\infty}\frac{1}{N}\sum_{n = 1}^Ne^{2\pi i(k_{n+h}-k_n)x}d\nu(x) = 0,
\end{alignat*}
where the last equality follows from the fact that $(\left(k_{n+h}-k_n\right)\alpha)_{n = 1}^{\infty}$ is uniformly distributed in the orbit closure of $\alpha$ for all $\alpha\in [0,1]$, and $\nu_{g',S}(\{0\}) = ||\mathbb{E}[g'|\mathcal{I}_S]||^2 = 0$.\\

Since $\mu_{f,T}\perp\mathcal{L}$, $(T^nf)_{n = 1}^{\infty} \in \text{UB}(L^2(X,\mu))$ is spectrally singular by Lemma \ref{CreatingWeaklyRigidSequences}. We may now use Lemma \ref{StrongMixingTimesRigidNormAveragesTo0} to see that

\begin{equation}
    \lim_{N\rightarrow\infty}\frac{1}{N}\sum_{n = 1}^NT^nf\cdot S^{k_n}g' = 0.
\end{equation}
Lastly, we use the mean ergodic theorem to see that

\begin{equation}
    \lim_{N\rightarrow\infty}\frac{1}{N}\sum_{n = 1}^NT^nf\cdot \mathbb{E}[g|\mathcal{I}_S] = \mathbb{E}[f|\mathcal{I}_T]\cdot\mathbb{E}[g|\mathcal{I}_S].
\end{equation}
\end{proof}

\begin{proof}[Proof of (ii)]
 Using \cite[Lemma 1.6]{MRf2CT} we see that for any bounded nonnegative $h \in L^2(X,\mu)$ we have

\begin{equation}
    \int_Xh\cdot\mathbb{E}[h|\mathcal{I}_T]\cdot\mathbb{E}[h|\mathcal{I}_S]d\mu \ge \left(\int_Xhd\mu\right)^3,
\end{equation}
so the desired result follows from part (i) after setting $h = f = g = \mathbbm{1}_A$.
\end{proof}

\begin{proof}[Proof of (iii)]
Since part (ii) was proven as a result of part (i), it suffices to only show that part (i) still holds in this new situation. To this end, it suffices to repeat the proof of (i) and observe that the measure $\mu_{g',S}$ now satisfies $\mu_{g',S}(\mathbb{Q}\cap[0,1]) = ||\mathbb{E}[g'|\mathcal{K}_{\text{rat}}(S)]||^2 = 0$ instead of just $\mu_{g',S}(\{0\}) = 0$. Consequently, we see that the last equation of \eqref{BergelsonianHerglotzEquation} will still hold since for all $x \in [0,1]\setminus\mathbb{Q}$ we have

\begin{equation}
    \lim_{N\rightarrow\infty}\frac{1}{N}\sum_{n = 1}^Ne^{2\pi i(k_{n+h}-k_n)x} = 0.
\end{equation}
\end{proof}

\begin{remark}\label{RemarkAboutMyQuestions}
    To see that our methods cannot be used to handle the more general case in which $T$ has zero entropy, we consider the horocycle flow $(X,\mathscr{B},\mu,T)$. For our discussion we don't require a concrete definition of the horocycle flow, but let us recall some of its dynamical properties. The horocycle flow is a minimal \cite{MinimalityOfTheHorocycleFlow} uniquely ergodic \cite{UniqueErgodicityOfTheHorocycleFlow} dynamical system with Lebesgue spectrum \cite{HorocycleFlowHasLebesgueSpectrum} that is mixing of all orders \cite{HorocycleFlowIsMixingOfAllOrders} and has zero entropy \cite{HorocycleFlowHasZeroEntropy}. It follows that if $0 \neq g \in L^2_0(X,\mu)$, then $\mu_{\overline{g},T}<<\mathcal{L}$, so $(T^n\overline{g})_{n = 1}^\infty$ will be spectrally Lebesgue as a result of Lemma \ref{CreatingWeaklyRigidSequences}, but we clearly have that
    
    \begin{equation}
        \lim_{N\rightarrow\infty}\frac{1}{N}\sum_{n = 1}^NT^ng\cdot T^n\overline{g} = \int_X|g|^2d\mu \neq 0 = \left(\int_Xgd\mu\right)^2,
    \end{equation}
    with convergence taking place in $L^2(X,\mu)$. This example shows that for a sequence of vectors $(g_n)_{n = 1}^\infty$ to have no correlation with a sequence generated by a zero entropy system, being spectrally Lebesgue is not sufficient. Since the assumptions on $(k_n)_{n = 1}^\infty$ in Theorem \ref{PartiallyAnswersNFMain} seem to be the smallest assumptions needed to make $(T^{k_n}g)_{n = 1}^\infty$ spectrally Lebesgue, we believe that Question \ref{MyFirstQuestion} should have a positive answer.
    
     The motivation for Question \ref{MySecondQuestion} comes from the observations that if $p(x) \in \mathbb{Z}[x]$ has degree at least $2$, $(p(n+h)\alpha,p(n)\alpha)_{n = 1}^\infty$ is uniformly distributed in $[0,1]^2$ for all $h \in \mathbb{N}$ and $\alpha \in \mathbb{R}\setminus\mathbb{Q}$, and if $a(n)$ is as in Theorem \ref{FrantzikinakisHardyFieldResult} then $(\lfloor a(n+h)\rfloor\alpha,\lfloor a(n)\rfloor\alpha)_{n = 1}^\infty$ is uniformly distributed in the square of the orbit closure of $\alpha$ for all $h \in \mathbb{N}$ and $\alpha \in \mathbb{R}$.\footnotemark[2] While it would be convenient for the conditions of Question \ref{MySecondQuestion} to be sufficient for generalizing Theorem \ref{PartiallyAnswersNFMain} to the case in which $T$ has zero entropy transformation, we are currently only able to show that these conditions are not necessary. Let us consider the sequence given by $x_n = n^2$ if $n$ is odd and $x_n = 2(n-1)^2$ if $n$ is even. It is shown in \cite[Theorems 2.4.26 and 2.4.28]{SohailsPhDThesis} that for $\alpha \in \mathbb{R}\setminus\mathbb{Q}$, $((x_{n+h}-x_n)\alpha))_{n = 1}^\infty$ is uniformly distributed for all $h \in \mathbb{N}$, but we see that $(x_n\alpha,x_{n+1}\alpha)_{n = 1}^\infty$ is not uniformly distributed since $x_{n+1} = 2x_n$ when $n$ is odd. Nonetheless, we see that Theorem \ref{QuestionOfNF} still holds if $p(n)$ is replaced by $x_n$, because if $T$ has zero entropy then $T^2$ does as well, and $x_n$ decomposes into polynomial sequences along the even and odd values of $n$. 
\end{remark}

\begin{theorem}
\label{KButNotK+1GeneralizationMain}
Let $k \ge 2$ be an integer and $\alpha \in \mathbb{R}$ be irrational. Let $R_k = \left\{n \in \mathbb{N}\ |\ n^k\alpha \in \left[\frac{1}{4},\frac{3}{4}\right]\right\}$. Let $(X,\mathscr{B},\mu)$ be a probability space and $S_1,S_2,\cdots,S_{k-1}:X\rightarrow X$ commuting measure preserving automorphisms. Let $T:X\rightarrow X$ be an measure preserving automorphism for which $\{T,S_1,S_2,\cdots,S_{k-1}\}$ generate a nilpotent group. For any $A \in \mathscr{B}$ with $\mu(A) > 0$ and $\mu_{\mathbbm{1}_A,T}\perp\mathcal{L}$, there exists $n \in R$ for which
    
    \begin{equation}
        \mu\left(A\cap T^{-n}A\cap S_1^{-n}A\cap S_2^{-n}A\cap\cdots\cap S_{k-1}^{-n}A\right) > 0.
    \end{equation}
\end{theorem}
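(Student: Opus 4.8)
The plan is to reduce the statement to a positive-liminf assertion and then separate a main term (handled by the known multiple recurrence theorem for nilpotent group actions) from error terms (handled by the machinery of Section \ref{HilbertianvdCSection}). Write $f=\mathbbm{1}_A$, so $\mu_{f,T}\perp\mathcal{L}$, and set $b_n:=\mu\big(A\cap T^{-n}A\cap\bigcap_{i=1}^{k-1}S_i^{-n}A\big)=\langle T^nf\cdot\prod_{i=1}^{k-1}S_i^nf,\,f\rangle\ge 0$. It is enough to show $\liminf_{N\to\infty}\frac{1}{N}\sum_{n=1}^{N}\mathbbm{1}_{[1/4,3/4]}(n^k\alpha)\,b_n>0$, since then some summand is positive, forcing that $n$ into $R_k$ with $b_n>0$. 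I would fix a trigonometric polynomial $P(x)=\sum_{|j|\le d}c_je^{2\pi i jx}$ with $c_0>0$ and $P\le\mathbbm{1}_{[1/4,3/4]}$ pointwise (uniformly approximate a continuous $\varphi$ with $0\le\varphi\le\mathbbm{1}_{[1/4,3/4]}$, $\int\varphi>0$, then subtract a small constant). Because $b_n\ge 0$, the proof reduces to two claims: (A) $\liminf_{N}\frac{1}{N}\sum_{n=1}^{N}b_n>0$; and (B) $\lim_{N}\frac{1}{N}\sum_{n=1}^{N}e^{2\pi i jn^k\alpha}b_n=0$ for every nonzero integer $j$. Indeed, (A) and (B) give
\begin{equation}
\liminf_{N\to\infty}\frac{1}{N}\sum_{n=1}^{N}\mathbbm{1}_{[1/4,3/4]}(n^k\alpha)b_n\ \ge\ \liminf_{N\to\infty}\frac{1}{N}\sum_{n=1}^{N}P(n^k\alpha)b_n\ =\ c_0\liminf_{N\to\infty}\frac{1}{N}\sum_{n=1}^{N}b_n\ >\ 0 .
\end{equation}

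Claim (A) is exactly the multiple recurrence theorem for measure preserving actions of nilpotent groups applied to $\{T,S_1,\dots,S_{k-1}\}$ (see, e.g., \cite{LeibmanNilpotentPolynomialSzemeredi}); this is the only place where the nilpotency hypothesis is used.

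For (B), I would group the correlation so that the scalar weight together with all the $S_i$'s sits on one side: with $v_n:=e^{2\pi i jn^k\alpha}\prod_{i=1}^{k-1}S_i^nf$ and $w_n:=\mathbbm{1}_A\cdot T^nf$, both in $\mathrm{UB}(L^2(X,\mu))$, one has $\langle v_n,w_n\rangle=e^{2\pi i jn^k\alpha}b_n$, so by Lemma \ref{LebesgueIsDisjointFromSingular} it suffices to show that $(v_n)$ is spectrally Lebesgue and $(w_n)$ is spectrally singular. (One should not instead pair the manifestly spectrally singular $(T^nf)$ of Lemma \ref{CreatingWeaklyRigidSequences} against $(e^{-2\pi i jn^k\alpha}\mathbbm{1}_A\prod_i S_i^nf)$: the stray $\mathbbm{1}_A$ on the $S$-side reintroduces $S_1$ after conjugation and the induction below fails to close by one degree, whereas the stray $\mathbbm{1}_A$ on the $T$-side is harmless.) That $(w_n)$ is spectrally singular follows because a direct computation (mean ergodic theorem and $\mathbbm{1}_A^2=\mathbbm{1}_A$) identifies the correlation sequence of $(w_n)$ with $h\mapsto\langle T^hf,\mathbb{E}[f|\mathcal{I}_T]f\rangle$, which is the Fourier--Stieltjes transform of a measure absolutely continuous with respect to $\mu_{f,T}$, hence singular to $\mathcal{L}$. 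For $(v_n)$, by Theorem \ref{StrongMixingClassicalvdC} it is enough to prove $\lim_{N}\frac{1}{N}\sum_n\langle v_{n+h},v_n\rangle=0$ for each $h\ge 1$; here $\langle v_{n+h},v_n\rangle=e^{2\pi i j\Delta_h(n^k)\alpha}\int_X\prod_{i=1}^{k-1}S_i^n\mathbbm{1}_{B_i}\,d\mu$, with $\Delta_h(n^k)=(n+h)^k-n^k$ of degree $k-1$ (leading coefficient $kh$) and $B_i=A\cap S_i^{-h}A$. Conjugating by $S_1^{-n}$ turns $\int\prod_{i=1}^{k-1}S_i^n\mathbbm{1}_{B_i}\,d\mu$ into a correlation of the $k-2$ commuting transformations $S_1^{-1}S_i$ ($2\le i\le k-1$), so what remains is a polynomial-exponential weighted multiple ergodic average with a weight of degree $k-1$ and $k-2$ commuting transformations. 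I would then run a van der Corput / PET induction of $k-3$ steps (none when $k\le 3$): at each step apply van der Corput (Theorem \ref{ClassicalvanderCorput'sDifferenceTheorems}(i)), then rewrite the resulting correlation by absorbing the new shift into the functions and conjugating by one of the remaining commuting transformations, which lowers both the number of transformations and the degree of the polynomial weight by one, while its leading coefficient stays a nonzero multiple of $j\alpha$, hence irrational. This terminates with an average $\frac{1}{N}\sum_n e^{2\pi i q(n)}\langle R^ng_1,g_2\rangle$ for a single transformation $R$ and $\deg q=2$ with irrational leading coefficient; writing $\langle R^ng_1,g_2\rangle=\int_0^1 e^{2\pi i n\theta}\,d\sigma(\theta)$ for a finite cross-spectral measure $\sigma$ and applying Weyl's equidistribution theorem to $q(n)+n\theta$ (degree $2$, irrational leading coefficient) together with dominated convergence yields the vanishing. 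The cases $k=2$ (the correlation is the constant $\mu(A\cap S_1^{-h}A)$ and the weight already a nontrivial linear exponential) and $k=3$ (only the single conjugation is needed, leaving $\deg q=2$) are disposed of directly.

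The heart of the argument, and the step I expect to be the main obstacle, is claim (B) --- more precisely, showing $(v_n)$ is spectrally Lebesgue, which forces the PET induction over the commuting $S_i$ and requires its bookkeeping to be arranged so that exactly one transformation survives while the polynomial weight still has degree $2$. A set defined by $n^{k-1}$ rather than $n^k$ would leave a purely linear weight at the last step, which could resonate with an eigenvalue of the residual transformation; so the degree-$k$ choice is essential (paralleling the role of $k$ in Theorem \ref{KButNotK+1Thm}). The remaining ingredients --- the trigonometric minorant of $\mathbbm{1}_{[1/4,3/4]}$, the spectral singularity of $(\mathbbm{1}_A\cdot T^n\mathbbm{1}_A)$, and the disappearance of the $j\ne 0$ terms once Lemma \ref{LebesgueIsDisjointFromSingular} applies --- are routine given the framework of Section \ref{HilbertianvdCSection} and the nilpotent recurrence theorem.
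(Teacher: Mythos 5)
Your proposal is correct and follows essentially the same route as the paper: there too the nonzero Fourier modes of $\mathbbm{1}_{[\frac{1}{4},\frac{3}{4}]}(n^k\alpha)$ are killed by showing that $\left(e^{2\pi i m n^k\alpha}S_1^n\mathbbm{1}_AS_2^n\mathbbm{1}_A\cdots S_{k-1}^n\mathbbm{1}_A\right)_{n=1}^\infty$ is spectrally Lebesgue via an induction that trades one transformation for one degree of the polynomial weight at each step (Proposition \ref{PropCopiedFromFLW}), this is paired against the spectrally singular $T$-sequence, and the main term comes from Leibman's nilpotent multiple recurrence theorem (Walsh's convergence theorem is invoked only to have an actual limit, which your minorant-plus-$\liminf$ device makes unnecessary). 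The one genuine divergence is minor: the paper applies the norm-average Lemma \ref{StrongMixingTimesRigidNormAveragesTo0} to $(T^n\mathbbm{1}_A)_{n=1}^\infty$ itself and only afterwards pairs with the fixed function $\mathbbm{1}_A$ by Cauchy--Schwarz, so your extra step of verifying that $(\mathbbm{1}_A\cdot T^n\mathbbm{1}_A)_{n=1}^\infty$ is spectrally singular (correct, via the cross-spectral measure being absolutely continuous with respect to $\mu_{\mathbbm{1}_A,T}$) and your concern about the ``stray $\mathbbm{1}_A$'' simply do not arise in the paper's formulation.
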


\begin{proof}
We remark that our proof is essentially the same as that in \cite[Corollary 4.4]{KbutNotK+1} other than the fact that we use Theorem \ref{StrongMixingClassicalvdC} in place of Theorem \ref{ClassicalvanderCorput'sDifferenceTheorems}(i), and that we use nilpotent ergodic theorems of Walsh and Leibman in place of the Furstenberg-Katznelson multiple recurrence theorem. We begin with a useful proposition.

\begin{prop}
\label{PropCopiedFromFLW}
Let $k \in \mathbb{N}$ be at least 2, $(X,\mathscr{B},\mu)$ be a probability space, and $S_1,\cdots,S_{k-1}:X\rightarrow X$ be commuting measure preserving automorphisms. Let $p(x) \in \mathbb{R}[x]$ have degree at least $k$ and an irrational leading coefficient. For any $f_1,\cdots,f_{k-1} \in L^{\infty}(X,\mu)$ the sequence

\begin{equation}
\label{ANearlyOrthogonalSequenceApplication}
    \left(S_1^nf_1S_2^nf_2\cdots S_{k-1}^nf_{k-1}e^{2\pi ip(n)}\right)_{n = 1}^\infty
\end{equation}
is spectrally Lebesgue.
\end{prop}

\begin{proof}[Proof of Proposition \ref{PropCopiedFromFLW}]
We proceed by induction on $k$. Let $a_n = S_1^nf_1S_2^nf_2\cdots S_{k-1}^nf_ke^{2\pi ip(n)}$, $g_{i,h} = S_i^hf_i\overline{f_i}$, $\tilde{S}_i = S_iS_1^{-1}$, and observe that for $h \in \mathbb{N}$ we have

\begin{alignat*}{2}
&\lim_{N\rightarrow\infty}\frac{1}{N}\sum_{n = 1}^N\langle a_{n+h}, a_n\rangle\numberthis\\
= & \lim_{N\rightarrow\infty}\frac{1}{N}\sum_{n = 1}^N\int_XS_1^ng_{1,h}S_2^ng_{2,h}\cdots S_{k-1}^ng_{k-1,h}e^{2\pi i(p(n+h)-p(n))}d\mu\\
= & \lim_{N\rightarrow\infty}\frac{1}{N}\sum_{n = 1}^N\int_Xg_{1,h}\tilde{S}_2^ng_{2,h}\cdots \tilde{S}_{k-1}^ng_{k-1,h}e^{2\pi i(p(n+h)-p(n))}d\mu = 0,
\end{alignat*}
where the last equality follows from the uniform distribution of $(p(n+h)-p(n))_{n = 1}^\infty\pmod{1}$ when $k = 2$, and from the inductive hypothesis and Corollary \ref{NearlyStronglyMixingAveragesTo0} when $k > 2$. The fact that $(a_n)_{n = 1}^\infty$ is spectrally Lebesgue now follows from Theorem \ref{StrongMixingClassicalvdC}.
\end{proof}

Returning to the proof of Theorem \ref{KButNotK+1GeneralizationMain}, we use Lemma \ref{CreatingWeaklyRigidSequences} to see that $(T^n\mathbbm{1}_A)_{n = 1}^\infty$ is spectrally singular, and we also observe that the sequence in equation \eqref{ANearlyOrthogonalSequenceApplication} with $f_i = \mathbbm{1}_A$ for all $i$ is spectrally Lebesgue. An application of Lemma \ref{StrongMixingTimesRigidNormAveragesTo0} shows us that for any $m \in \mathbb{Z}\setminus\{0\}$ we have

\begin{alignat*}{2}
    &\lim_{N\rightarrow\infty}\left|\left|\frac{1}{N}\sum_{n = 1}^Ne^{2\pi imn^k\alpha}S_1^n\mathbbm{1}_AS_2^n\mathbbm{1}_A\cdots S_{k-1}^n\mathbbm{1}_AT^n\mathbbm{1}_A\right|\right| = 0\text{, hence}\\
    &\lim_{N\rightarrow\infty}\frac{1}{N}\sum_{n = 1}^N\mu\left(A\cap T^{-n}A\cap S_1^{-n}A\cap S_2^{-n}A\cap\cdots\cap S_{k-1}^nA\right)e^{2\pi imn^k\alpha}\numberthis\\
    =& \lim_{N\rightarrow\infty}\frac{1}{N}\sum_{n = 1}^N\left\langle \mathbbm{1}_A, e^{2\pi imn^k\alpha}S_1^n\mathbbm{1}_AS_2^n\mathbbm{1}_A\cdots S_{k-1}^n\mathbbm{1}_AT^n\mathbbm{1}_A\right\rangle = 0.
\end{alignat*}
Using standard linearity arguments, we now see that

\begin{alignat*}{2}
&\lim_{N\rightarrow\infty}\frac{1}{N}\sum_{n = 1}^N\mathbbm{1}_{[\frac{1}{4},\frac{3}{4}]}(n^k\alpha)\mu\left(A\cap T^{-n}A\cap S_1^{-n}A\cap S_2^{-n}A\cap\cdots\cap S_{k-1}^nA\right)\numberthis\\
= &\lim_{N\rightarrow\infty}\frac{1}{N}\sum_{n = 1}^N\frac{1}{2}\mu\left(A\cap T^{-n}A\cap S_1^{-n}A\cap S_2^{-n}A\cap\cdots\cap S_{k-1}^nA\right) > 0,
\end{alignat*}
where the existence of the last limit follows from \cite[Theorem 1.1]{WalshErgodicTheorem}, and the last inequality follows from \cite[Theorem NM]{LeibmanNilpotentPolynomialSzemeredi}.\hfill$\square$
\end{proof}

\begin{theorem}
\label{ApplicationForSingleTMain}
Let $(X,\mathscr{B},\mu)$ be a probability space and $T,S:X\rightarrow X$ be measure preserving automorphisms for which $S$ is totally ergodic. Let $p_1,\cdots,p_K \in \mathbb{Q}[x]$ be a collection of integer polynomials such that $\{p_1(n+h)-p_1(n),p_2(n+h)-p_1(n),\cdots,p_K(n+h)-p_1(n),p_2(n)-p_1(n),\cdots,p_K(n)-p_1(n)\}$ is independent for all $h \in \mathbb{N}$. For any $f,g_1,\cdots,g_K \in L^\infty(X,\mu)$ for which $\mu_{f,T}\perp\mathcal{L}$, we have 

\begin{equation}
    \lim_{N\rightarrow\infty}\frac{1}{N}\sum_{n = 1}^NT^nf\prod_{i = 1}^KS^{p_i(n)}g_i = \mathbb{E}[f|\mathcal{I}_T]\prod_{i = 1}^K\int_Xg_id\mu,
\end{equation}
with convergence taking place in $L^2(X,\mu)$.
\end{theorem}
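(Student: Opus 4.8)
The plan is to mimic the structure already used in this section (e.g.\ in Proposition~\ref{PropCopiedFromFLW}): show that the sequence of $S$-side products is spectrally Lebesgue, then combine this with the fact that $(T^nf)_{n=1}^\infty$ is spectrally singular---which holds by Lemma~\ref{CreatingWeaklyRigidSequences} because $\mu_{f,T}\perp\mathcal{L}$---via Lemma~\ref{StrongMixingTimesRigidNormAveragesTo0}. First I would reduce to a centered statement. Writing $g_i=\int_X g_i\,d\mu+g_i'$ with $\int_X g_i'\,d\mu=0$ and expanding multilinearly,
\[
\frac1N\sum_{n=1}^N T^nf\prod_{i=1}^K S^{p_i(n)}g_i=\Bigl(\prod_{i=1}^K\int_X g_i\,d\mu\Bigr)\frac1N\sum_{n=1}^N T^nf+\sum_{\emptyset\neq J\subseteq\{1,\dots,K\}}\Bigl(\prod_{i\notin J}\int_X g_i\,d\mu\Bigr)\frac1N\sum_{n=1}^N T^nf\prod_{i\in J}S^{p_i(n)}g_i'.
\]
By the mean ergodic theorem the first term converges in $L^2(X,\mu)$ to $\mathbb{E}[f|\mathcal{I}_T]\prod_{i=1}^K\int_X g_i\,d\mu$, so it suffices to prove: for every nonempty $J\subseteq\{1,\dots,K\}$ and every family $(h_i)_{i\in J}\subseteq L^\infty(X,\mu)$ with $\int_X h_i\,d\mu=0$, we have $\lim_{N\to\infty}\bigl\|\frac1N\sum_{n=1}^N T^nf\prod_{i\in J}S^{p_i(n)}h_i\bigr\|=0$.

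Fix such a $J$, set $j_0=\min J$, and let $b_n=\prod_{i\in J}S^{p_i(n)}h_i\in\mathrm{UB}(L^2(X,\mu))$. By Theorem~\ref{StrongMixingClassicalvdC} it is enough to show $\lim_{N\to\infty}\frac1N\sum_{n=1}^N\langle b_{n+h},b_n\rangle=0$ for every $h\in\mathbb{N}$. Since $S$ is measure preserving and commutes with conjugation, applying $S^{-p_{j_0}(n)}$ under the integral gives
\[
\langle b_{n+h},b_n\rangle=\int_X\overline{h_{j_0}}\cdot\prod_{i\in J}S^{\,p_i(n+h)-p_{j_0}(n)}h_i\cdot\prod_{i\in J\setminus\{j_0\}}S^{\,p_i(n)-p_{j_0}(n)}\overline{h_i}\;d\mu.
\]
Averaging over $n$ and pulling $\overline{h_{j_0}}$ out of the $n$-average, Theorem~\ref{PolynomialAveragesConvergeForSingleT} for the totally ergodic system $(X,\mathscr{B},\mu,S)$---applied to the $2|J|-1$ integer polynomials $\{p_i(n+h)-p_{j_0}(n)\}_{i\in J}\cup\{p_i(n)-p_{j_0}(n)\}_{i\in J\setminus\{j_0\}}$---shows that $\lim_N\frac1N\sum_n\langle b_{n+h},b_n\rangle$ equals $\bigl(\int_X\overline{h_{j_0}}\,d\mu\bigr)\prod_{i\in J}\bigl(\int_X h_i\,d\mu\bigr)\prod_{i\in J\setminus\{j_0\}}\bigl(\int_X\overline{h_i}\,d\mu\bigr)$, which is $0$ since $\int_X h_i\,d\mu=0$ for each $i\in J$. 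Hence $(b_n)_{n=1}^\infty$ is spectrally Lebesgue, and Lemma~\ref{StrongMixingTimesRigidNormAveragesTo0} (applied with $(f_n)=(b_n)$ and $(g_n)=(T^nf)$) yields the claimed vanishing, completing the reduction.

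The step I expect to be the main obstacle is checking that $\{p_i(n+h)-p_{j_0}(n)\}_{i\in J}\cup\{p_i(n)-p_{j_0}(n)\}_{i\in J\setminus\{j_0\}}$ is \emph{independent} in the sense of Theorem~\ref{PolynomialAveragesConvergeForSingleT}. This is linear algebra: writing $p_i(n+h)-p_{j_0}(n)=(p_i(n+h)-p_1(n))-(p_{j_0}(n)-p_1(n))$ and $p_i(n)-p_{j_0}(n)=(p_i(n)-p_1(n))-(p_{j_0}(n)-p_1(n))$, any rational linear combination of the re-centered, restricted family becomes a rational linear combination of distinct members of the family $\{p_i(n+h)-p_1(n)\}_{i=1}^K\cup\{p_i(n)-p_1(n)\}_{i=2}^K$, in which $p_{j_0}(n)-p_1(n)$ (when $j_0\neq1$) appears with coefficient equal to the negative of the sum of all the other coefficients; independence of that family then forces every coefficient to vanish. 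It is also worth recording that the hypothesis already forces $\deg p_i\ge2$ for all $i$, since $(p_i(n+h)-p_1(n))-(p_i(n)-p_1(n))=p_i(n+h)-p_i(n)$ must be non-constant, so all the polynomials appearing above are genuinely non-constant integer polynomials.
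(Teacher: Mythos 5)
Your proof is correct and follows essentially the same route as the paper: reduce by linearity to zero-mean factors, show the $S$-side product sequence is spectrally Lebesgue via Theorem \ref{StrongMixingClassicalvdC} together with Theorem \ref{PolynomialAveragesConvergeForSingleT}, and conclude with Lemmas \ref{CreatingWeaklyRigidSequences} and \ref{StrongMixingTimesRigidNormAveragesTo0}. The only difference is bookkeeping: the paper keeps all $K$ factors and always shifts by $p_1(n)$, so the independence hypothesis applies verbatim, whereas your multilinear expansion over subsets $J$ forces the extra verification that $\{p_i(n+h)-p_{j_0}(n)\}_{i\in J}\cup\{p_i(n)-p_{j_0}(n)\}_{i\in J\setminus\{j_0\}}$ remains independent, which you carry out correctly.
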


\begin{proof}[Proof of Theorem \ref{ApplicationForSingleTMain}]
We see from Lemma \ref{CreatingWeaklyRigidSequences} that $(T^nf)_{n = 1}^\infty$ is spectrally singular, so using standard linearity arguments in conjunction with Lemma \ref{StrongMixingTimesRigidNormAveragesTo0}, it suffices to show that $\left(\prod_{i = 1}^KS^{p_i(n)}g_i\right)_{n = 1}^\infty$ is spectrally Lebesgue if $\int_Xg_id\mu = 0$ for some $1 \le i \le K$. To this end, we first observe that for $h \in \mathbb{N}$ we have

\begin{alignat*}{2}
&\lim_{N\rightarrow\infty}\frac{1}{N}\sum_{n = 1}^N\int_X\left(\prod_{i = 1}^KS^{p_i(n+h)}g_i\right)\left(\prod_{i = 1}^KS^{p_i(n)}\overline{g_i}\right)d\mu\numberthis\\
=&\lim_{N\rightarrow\infty}\frac{1}{N}\sum_{n = 1}^N\int_X\left(\prod_{i = 1}^KS^{p_i(n+h)-p_1(n)}g_i\right)\left(\prod_{i = 2}^KS^{p_i(n)-p_1(n)}\overline{g_i}\right)\overline{g_1}d\mu\\
=&\left(\prod_{i = 1}^K\int_Xg_id\mu\right)\left(\prod_{i = 2}^K\int_X\overline{g_i}d\mu\right)\int_X\overline{g_1}d\mu = 0,
\end{alignat*}
where the penultimate equality follows from Theorem \ref{PolynomialAveragesConvergeForSingleT}, so the desired result now follows from Theorem \ref{StrongMixingClassicalvdC}.
\end{proof}

\begin{remark}
While the conditions imposed on $p_1,\cdots,p_K$ in Theorem \ref{ApplicationForSingleTMain} are more complicated than those imposed by Theorem \ref{ApplicationForSingleT}, they are satisfied by many new collections, such as for $(p_1(x),p_2(x),p_3(x)) \in \left\{\left(x^2,x^3,x^5\right), \left(x^3,x^4,x^5\right),\left(x^6,x^6+x^4,x^6+x^3\right)\right\}$.
\end{remark}

\begin{theorem}\label{ApplicationWithHardyFunctions}
    Let $a_1,\cdots,a_k \in \mathcal{HF}$ have polynomial growth and assume that if $a$ is a nontrivial $\mathbb{R}$-linear combination of $\{a_i(t)\}_{i = 1}^k\cup\{a_i(t+h)\}$ for some $h \in \mathbb{N}$, then

     \begin{equation}
         \lim_{t\rightarrow\infty}\frac{\left|a(t)-\alpha p(t)\right|}{\log(t)} = \infty\text{ for any }p(t) \in \mathbb{Z}[t], \alpha \in \mathbb{R}. 
     \end{equation}
     Let $(X,\mathscr{B},\mu)$ be a probability space and $T,S:X\rightarrow X$ measure preserving automorphisms for which $S$ is ergodic. For any functions $f,g_1,\cdots,g_k \in L^\infty(X,\mu)$ for which $\mu_{f,T}\perp\mathcal{L}$, we have

     \begin{equation}
        \lim_{N\rightarrow\infty}\frac{1}{N}\sum_{n = 1}^NT^nf\prod_{i = 1}^kS^{\lfloor a_i(n)\rfloor}g_i = \mathbb{E}[f|\mathcal{I}_T]\prod_{i = 1}^k\int_Xg_id\mu.
    \end{equation}
\end{theorem}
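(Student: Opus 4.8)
The plan is to follow the strategy used for Theorem \ref{ApplicationForSingleTMain}, with Tsinas's Theorem \ref{JointErgodicityForHardy} taking over the role that the Frantzikinakis--Kra polynomial ergodic theorem played there. Put $c_i = \int_X g_i\,d\mu$ and $g_i' = g_i - c_i$, so that $\int_X g_i'\,d\mu = 0$. Expanding
\begin{equation*}
\prod_{i=1}^k S^{\lfloor a_i(n)\rfloor}g_i = \sum_{I\subseteq\{1,\dots,k\}}\Big(\prod_{i\notin I}c_i\Big)\prod_{i\in I}S^{\lfloor a_i(n)\rfloor}g_i'
\end{equation*}
and using linearity, the Ces\`aro average splits into the $I=\emptyset$ term $\big(\prod_{i=1}^k c_i\big)\frac1N\sum_{n=1}^N T^nf$, which converges in $L^2$ to $\mathbb{E}[f|\mathcal{I}_T]\prod_{i=1}^k c_i$ by the mean ergodic theorem, plus one term $\frac1N\sum_{n=1}^N T^nf\prod_{i\in I}S^{\lfloor a_i(n)\rfloor}g_i'$ for each nonempty $I$; it thus suffices to show that each of the latter tends to $0$ in $L^2(X,\mu)$.

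Fix a nonempty $I\subseteq\{1,\dots,k\}$. Since $\mu_{f,T}\perp\mathcal{L}$, Lemma \ref{CreatingWeaklyRigidSequences} shows that $(T^nf)_{n=1}^\infty\in\text{UB}(L^2(X,\mu))$ is spectrally singular, whereas $\big(\prod_{i\in I}S^{\lfloor a_i(n)\rfloor}g_i'\big)_{n=1}^\infty$ is uniformly bounded and hence lies in $\text{SA}(L^2(X,\mu))$. By Lemma \ref{StrongMixingTimesRigidNormAveragesTo0} it therefore suffices to prove that $\big(\prod_{i\in I}S^{\lfloor a_i(n)\rfloor}g_i'\big)_{n=1}^\infty$ is spectrally Lebesgue, and by Theorem \ref{StrongMixingClassicalvdC} for that it is enough to show that for every $h\in\mathbb{N}$,
\begin{equation*}
\lim_{N\to\infty}\frac1N\sum_{n=1}^N\Big\langle\prod_{i\in I}S^{\lfloor a_i(n+h)\rfloor}g_i',\ \prod_{i\in I}S^{\lfloor a_i(n)\rfloor}g_i'\Big\rangle = 0,
\end{equation*}
since then the series in \eqref{EquationForMainApplication} vanishes termwise.

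The crux is to recognize the $h$th correlation as a multiple ergodic average for $S$: the inner product above equals $\int_X\big(\prod_{i\in I}S^{\lfloor a_i(n+h)\rfloor}g_i'\big)\big(\prod_{i\in I}S^{\lfloor a_i(n)\rfloor}\overline{g_i'}\big)\,d\mu$. Because our Hardy fields are translation invariant, each $a_i(t+h)$ is again a polynomial-growth element of $\mathcal{HF}$, and any nontrivial $\mathbb{R}$-linear combination of the $2|I|$ germs $\{a_i(t+h):i\in I\}\cup\{a_i(t):i\in I\}$ is also a nontrivial $\mathbb{R}$-linear combination of $\{a_i(t)\}_{i=1}^k\cup\{a_i(t+h)\}_{i=1}^k$, hence by the hypothesis of the theorem satisfies $\lim_{t\to\infty}|a(t)-\alpha p(t)|/\log(t) = \infty$ for all $p\in\mathbb{Z}[t]$ and $\alpha\in\mathbb{R}$. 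Applying Theorem \ref{JointErgodicityForHardy} to the ergodic system $(X,\mathscr{B},\mu,S)$ with these $2|I|$ Hardy functions and the functions $\{g_i':i\in I\}\cup\{\overline{g_i'}:i\in I\}$, the average $\frac1N\sum_{n=1}^N\big(\prod_{i\in I}S^{\lfloor a_i(n+h)\rfloor}g_i'\big)\big(\prod_{i\in I}S^{\lfloor a_i(n)\rfloor}\overline{g_i'}\big)$ converges in $L^2(X,\mu)$ to $\prod_{i\in I}\int_X g_i'\,d\mu\cdot\prod_{i\in I}\int_X\overline{g_i'}\,d\mu$, which is $0$ since $I$ is nonempty and $\int_X g_i'\,d\mu = 0$. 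Integrating over $X$ and using that $L^2$ convergence forces convergence of the integrals gives the displayed correlation limit.

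I expect the one delicate point --- really bookkeeping rather than a genuine obstacle --- to be verifying that the hypothesis imposed on $a_1,\dots,a_k$ is exactly what is needed to feed the translated-and-shifted family $\{a_i(t+h),a_i(t)\}$ into Theorem \ref{JointErgodicityForHardy}; in particular, one should note that the $\log$-growth condition already precludes every degeneracy (such as $a_i(t+h)-a_j(t)$ being the zero germ, bounded, or asymptotic to a real multiple of an integer polynomial), so the combined exponent list consists of genuinely distinct, sufficiently non-polynomial Hardy germs. Everything else is the same formal machinery already used in the proof of Theorem \ref{ApplicationForSingleTMain}: van der Corput via Theorem \ref{StrongMixingClassicalvdC}, together with the orthogonality of spectrally Lebesgue sequences against spectrally singular ones furnished by Lemma \ref{StrongMixingTimesRigidNormAveragesTo0}.
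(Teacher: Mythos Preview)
Your proof is correct and follows essentially the same approach as the paper's. The paper's proof invokes ``standard linearity arguments'' to reduce to the case where some $g_i$ has integral zero, whereas you make this explicit via the subset decomposition $g_i = c_i + g_i'$; apart from that cosmetic difference, the chain of reductions --- Lemma~\ref{CreatingWeaklyRigidSequences} for spectral singularity of $(T^nf)$, Theorem~\ref{JointErgodicityForHardy} applied to the family $\{a_i(t+h),a_i(t)\}$ to kill the $h$-correlations, Theorem~\ref{StrongMixingClassicalvdC} to conclude the product sequence is spectrally Lebesgue, and Lemma~\ref{StrongMixingTimesRigidNormAveragesTo0} to finish --- is identical.
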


\begin{proof}
We see from Lemma \ref{CreatingWeaklyRigidSequences} that $(T^nf)_{n = 1}^\infty$ is spectrally singular, so using standard linearity arguments in conjunction with Lemma \ref{StrongMixingTimesRigidNormAveragesTo0}, it suffices to show that $\left(\prod_{i = 1}^KS^{\lfloor a_i(n)\rfloor}g_i\right)_{n = 1}^\infty$ is spectrally Lebesgue if $\int_Xg_id\mu = 0$ for some $1 \le i \le K$. To this end, we first observe that for $h \in \mathbb{N}$ we have

\begin{alignat*}{2}
&\lim_{N\rightarrow\infty}\frac{1}{N}\sum_{n = 1}^N\int_X\left(\prod_{i = 1}^KS^{\lfloor a_i(n+h)\rfloor}g_i\right)\left(\prod_{i = 1}^KS^{\lfloor a_i(n)\rfloor}\overline{g_i}\right)d\mu = 0\numberthis\\
\end{alignat*}
due to Theorem \ref{JointErgodicityForHardy}, so the desired result now follows from Theorem \ref{StrongMixingClassicalvdC}.    
\end{proof}

\begin{remark}
    A simple example of $a_1,\cdots,a_k \in \mathcal{HF}$ satisfying Theorem \ref{ApplicationWithHardyFunctions} is $a_i(t) = t^{\alpha_i}$ where $1 < \alpha_1 < \alpha_2 < \cdots < \alpha_k \in \mathbb{R}\setminus\mathbb{N}$ satisfy $\alpha_{i+1} > \alpha_i+1$ for all $1 \le i < k$. 
\end{remark}

\begin{theorem}
\label{ApplicationForT1T2Main}
Let $(X,\mathscr{B},\mu)$ be a probability space, $L_1 \in \mathbb{N}\cup\{0\}$, $L_2 \in \mathbb{N}$, and $T,R_1,\cdots,R_{L_1},S,W:X\rightarrow X$ be measure preserving transformations. Suppose $\{R_1,\cdots,R_{L_1},S,W\}$ generate a commutative group, and $S$ is weakly mixing. Let $p_1,\cdots,p_{L_2} \in \mathbb{Q}[x]$ be pairwise essentially distinct integer polynomials, each having degree at least 2. For any $f,h_i, g_j \in L^\infty(X,\mu)$ with $1 \le i \le L_1$, $1 \le j \le L_2$, satisfying $\int_Xg_jd\mu = 0$ for some $1 \le j \le L_2$, and for which $\mu_{f,T}\perp\mathcal{L}$, we have

\begin{equation}
    \lim_{N\rightarrow\infty}\frac{1}{N}\sum_{n = 1}^NT^nf\left(\prod_{i = 1}^{L_1}R_i^nh_i\right)\left(\prod_{j = 1}^{L_2}S^{p_j(n)}W^ng_j\right) = 0,
\end{equation}
with convergence taking place in $L^2(X,\mu)$.
\end{theorem}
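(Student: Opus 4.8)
The plan is to follow the scheme used in the proofs of Theorems \ref{PartiallyAnswersNFMain}, \ref{KButNotK+1GeneralizationMain}, and \ref{ApplicationForSingleTMain}. Since $\mu_{f,T}\perp\mathcal{L}$, Lemma \ref{CreatingWeaklyRigidSequences} shows that $(T^nf)_{n=1}^\infty$ is spectrally singular, so by Lemma \ref{StrongMixingTimesRigidNormAveragesTo0} it suffices to show that
\[
b_n:=\Big(\prod_{i=1}^{L_1}R_i^nh_i\Big)\Big(\prod_{j=1}^{L_2}S^{p_j(n)}W^ng_j\Big)
\]
is a spectrally Lebesgue sequence in $L^2(X,\mu)$, where we have fixed $j_0$ with $\int_Xg_{j_0}\,d\mu=0$. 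The sequence $(b_n)_{n=1}^\infty$ is uniformly bounded by $C:=\prod_i\|h_i\|_\infty\prod_j\|g_j\|_\infty$, so $|\langle b_{n+h},b_n\rangle|\le C^2$ for all $n$ and $h$. Consequently, by Theorem \ref{StrongMixingClassicalvdC} it is enough to prove that $\lim_{N\to\infty}\frac1N\sum_{n=1}^N\langle b_{n+h},b_n\rangle=0$ for all but finitely many $h\in\mathbb{N}$: the remaining $h$ then contribute only finitely many nonzero terms, each of size at most $C^4$, to the series in \eqref{EquationForMainApplication}.

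The next step is to compute the correlation. Using that $R_1,\dots,R_{L_1},S,W$ generate a commutative group, that each of these transformations acts multiplicatively on $L^\infty(X,\mu)$, and that $\int_X\varphi\,d\mu=\int_XW^{-n}\varphi\,d\mu$, one rewrites, for each fixed $h$,
\[
\langle b_{n+h},b_n\rangle=\int_X\Big(\prod_{i=1}^{L_1}Q_i^nH_{i,h}\Big)\Big(\prod_{j=1}^{L_2}S^{p_j(n+h)}\widetilde g_j\Big)\Big(\prod_{j=1}^{L_2}S^{p_j(n)}\overline{g_j}\Big)\,d\mu,
\]
where $Q_i:=W^{-1}R_i$ still commute with each other and with $S$, $H_{i,h}:=R_i^hh_i\cdot\overline{h_i}$, and $\widetilde g_j:=W^hg_j$; note $\int_X\widetilde g_{j_0}\,d\mu=\int_Xg_{j_0}\,d\mu=0$ since $W$ preserves $\mu$.

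The crux is a dichotomy in $h$. Call $h$ \emph{exceptional} if $p_j(x+h)-p_{j'}(x)$ is a constant polynomial for some $j\ne j'$. Comparing leading terms forces $\deg p_j=\deg p_{j'}=:d$ with equal leading coefficients $a\ne0$, and then the coefficient of $x^{d-1}$ in $p_j(x+h)-p_{j'}(x)$ equals $dah$ plus a constant; as $d\ge2$ this is a nonconstant function of $h$, so each ordered pair $(j,j')$ with $j\neq j'$ yields at most one exceptional $h$ and the exceptional set is finite. If $h$ is not exceptional, the $2L_2$ integer polynomials $\{p_j(x+h)\}_j\cup\{p_j(x)\}_j$ are pairwise essentially distinct of degree at least $2$ (the pairs $p_j(x+h)$ versus $p_j(x)$ are essentially distinct since $p_j(x+h)-p_j(x)$ has degree $\deg p_j-1\ge1$). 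Since $S$ is weakly mixing, the relevant polynomial ergodic theorem — which is exactly Bergelson's theorem from the start of Section \ref{SectionOnMeasurePreservingSystems} when $L_1=0$, and more generally concerns the commuting family $Q_1,\dots,Q_{L_1},S$ with $S$ weakly mixing — shows that $\frac1N\sum_{n=1}^N\big(\prod_iQ_i^nH_{i,h}\big)\big(\prod_jS^{p_j(n+h)}\widetilde g_j\big)\big(\prod_jS^{p_j(n)}\overline{g_j}\big)$ converges in $L^2(X,\mu)$ to a limit that vanishes because $\int_X\widetilde g_{j_0}\,d\mu=0$. Integrating against the constant function $1$ then gives $\lim_{N\to\infty}\frac1N\sum_{n=1}^N\langle b_{n+h},b_n\rangle=0$ for every non-exceptional $h$, which is exactly what is needed; $(b_n)_{n=1}^\infty$ is then spectrally Lebesgue and Lemma \ref{StrongMixingTimesRigidNormAveragesTo0} completes the proof.

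I expect the main obstacle to be that last vanishing statement for non-exceptional $h$. The pairwise essential distinctness of $\{p_j(\cdot+h)\}_j\cup\{p_j(\cdot)\}_j$ together with weak mixing of $S$ is precisely the Bergelson weakly-mixing setting, but one must accommodate the extra commuting transformations $Q_1,\dots,Q_{L_1}$ carrying linear iterates: one needs the version of that theorem for finitely many commuting transformations one of which is weakly mixing (or the corresponding identification of characteristic factors in Walsh's ergodic theorem), whose content is that the functions $\widetilde g_j,\overline{g_j}$ attached to the weakly-mixing polynomial iterates can be replaced by their integrals while the $Q_i^n$ factors are carried along harmlessly — a PET induction whose base case is forced by weak mixing of $S$. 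An alternative that stays inside the present framework is to apply Theorem \ref{StrongMixingClassicalvdC} together with Corollary \ref{NearlyStronglyMixingAveragesTo0} once more, to the $L^2(X,\mu)$-valued sequence $n\mapsto\big(\prod_iQ_i^nH_{i,h}\big)\big(\prod_jS^{p_j(n+h)}\widetilde g_j\big)\big(\prod_jS^{p_j(n)}\overline{g_j}\big)$, differencing until the surviving $S$-iterates are nonconstant polynomials and weak mixing of $S$ produces the decay.
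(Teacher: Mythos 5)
Your reduction is sound and matches the paper's strategy: Lemma \ref{CreatingWeaklyRigidSequences} makes $(T^nf)_{n=1}^\infty$ spectrally singular, Lemma \ref{StrongMixingTimesRigidNormAveragesTo0} reduces the theorem to showing the sequence \eqref{ProbablyAKMixingSequence} is spectrally Lebesgue, Theorem \ref{StrongMixingClassicalvdC} lets you ignore the finitely many exceptional shifts $h$ (your argument that each ordered pair $(j,j')$ produces at most one exceptional $h$, using $\deg p_j\ge 2$, is correct and is exactly the role of the finite set $H$ in the paper), and your change of variables by $W^{-n}$ is a legitimate normalization of the correlation. The gap is the crux you yourself flag: for $L_1\ge 1$ the vanishing of $\lim_N\frac1N\sum_n\langle b_{n+h},b_n\rangle$ is not established. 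The Bergelson theorem quoted at the start of Section \ref{SectionOnMeasurePreservingSystems} concerns a single weakly mixing transformation, so it only covers your case $L_1=0$; the statement you invoke for the commuting family $Q_1,\dots,Q_{L_1},S$ (that the functions attached to the degree-$\ge 2$ iterates of the weakly mixing $S$ can be replaced by their integrals while the linear factors $Q_i^n$ are ``carried along harmlessly'') is a genuine joint-ergodicity theorem for commuting transformations that is neither among the paper's quoted results nor something you prove, so as written the proof is incomplete at exactly the step where all the work lies.

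The paper closes this gap without any external commuting PET, by induction on $L_1$ inside its own framework, and your fallback sketch is close to but not the same as this: for a non-exceptional $h$ one composes the correlation integrand with $R_1^{-n}$ (not $W^{-n}$), so that the factor coming from $h_1$ becomes the fixed function $\overline{h_1}R_1^hh_1$, while the rest is again a sequence of the form \eqref{ProbablyAKMixingSequence} with $L_1-1$ linear factors $R_iR_1^{-1}$, with $W$ replaced by $WR_1^{-1}$, and with the doubled polynomial family $\{p_j(x+h)\}_j\cup\{p_j(x)\}_j$, which is still pairwise essentially distinct of degree at least $2$. The inductive hypothesis says this inner sequence is spectrally Lebesgue, Corollary \ref{NearlyStronglyMixingAveragesTo0} then forces its Ces\`aro averages to $0$ in norm, and hence the quantity \eqref{IsThisActually0?} vanishes; the base case $L_1=0$ is the only place Bergelson's theorem is used. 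Note also that your phrase ``differencing until the surviving $S$-iterates are nonconstant polynomials'' misidentifies the obstruction: the $S$-iterates are already nonconstant polynomials of degree at least $2$ and never need to be differenced down; what must be eliminated, one per induction step, are the linear iterates of the (not necessarily weakly mixing) transformations $R_i$, and this is achieved by the composition with $R_1^{-n}$ just described. With that induction spelled out, your argument becomes the paper's proof; without it, the key vanishing statement rests on an unproved theorem.
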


\begin{proof}
We see from Lemma \ref{CreatingWeaklyRigidSequences} that $(T^nf)_{n = 1}^\infty$ is spectrally singular, so using Lemma \ref{StrongMixingTimesRigidNormAveragesTo0} it suffices to show that

\begin{equation}\label{ProbablyAKMixingSequence}
    \left(\prod_{i = 1}^{L_1}R_i^nh_i\right)\left(\prod_{j = 1}^{L_2}S^{p_j(n)}W^ng_j\right)
\end{equation}
is spectrally Lebesgue. We proceed by induction on $L_1$, beginning with the base case of $L_1 = 0$. We first observe that the set $H$ of $h \in \mathbb{N}$ for which $p_i(n+h)-p_j(n) = (p_i(n+h)-p_1(n))-(p_j(n)-p_1(n))$ is constant, is a finite set. Furthermore, since the $p_i$s are pairwise essentially distinct and of degree at least 2, there are no $i,j,h \in \mathbb{N}$ for which $p_i(n+h)-p_j(n+h) = (p_i(n+h)-p_1(n))-(p_j(n+h)-p_1(n))$ is constant. It follows that for $h \in \mathbb{N}\setminus H$, the collection $\mathcal{C}(h) := \{p_i(n+h)-p_1(n)\}_{i = 1}^{L_2}\cup\{p_i(n)-p_1(n)\}_{i = 2}^{L_2}$ consists of pairwise essentially distinct polynomials, hence

\begin{alignat*}{2}
    &\lim_{N\rightarrow\infty}\frac{1}{N}\sum_{n = 1}^N\left\langle \prod_{j = 1}^{L_2}S^{p_j(n+h)}W^{n+h}g_j, \prod_{j = 1}^{L_2}S^{p_j(n)}W^ng_j\right\rangle\numberthis\\
    = &\lim_{N\rightarrow\infty}\frac{1}{N}\sum_{n = 1}^N\int_X\overline{g_1}S^{p_1(n+h)-p_1(n)}W^hg_1\prod_{j = 2}^{L_2}\left(S^{p_j(n+h)-p_1(n)}W^hg_jS^{p_j(n)-p_1(n)}\overline{g_j}\right)d\mu\\
    =& \prod_{j = 1}^{L_2}\left(\int_XW^hg_jd\mu\int_X\overline{g_j}d\mu\right) = \prod_{j = 1}^{L_2}\left|\int_Xg_jd\mu\right|^2 = 0,
\end{alignat*}
where the second equality follows from the weakly mixing PET, so we may apply Theorem \ref{StrongMixingClassicalvdC} to complete the base case. For the inductive step let us assume that the desired result holds for all $L_1 \le \ell$, and we will show that it also holds for $L_1 = \ell+1$. As before, we see that there is a finite set $H \subseteq \mathbb{N}$, such that $\mathcal{C}(h)$ consists of pairwise essentially distinct polynomials when $h \in \mathbb{N}\setminus H$. Consequently, we see that for such $h$ we have

\begin{alignat*}{2}
    &\lim_{N\rightarrow\infty}\frac{1}{N}\sum_{n = 1}^N\left\langle \left(\prod_{i = 1}^{L_1}R_i^{n+h}h_i\right)\left(\prod_{j = 1}^{L_2}S^{p_j(n+h)}W^{n+h}g_j\right), \left(\prod_{i = 1}^{L_1}R_i^nh_i\right)\left(\prod_{j = 1}^{L_2}S^{p_j(n)}W^ng_j\right)\right\rangle\\
    = &\lim_{N\rightarrow\infty}\frac{1}{N}\sum_{n = 1}^N\int_X\overline{h_1}R_1^hh_1\prod_{i = 2}^{L_1}\left((R_iR_1^{-1})^n\overline{h_i}R_i^hh_i\right)\numberthis\label{IsThisActually0?}\\
    &\textcolor{white}{\lim_{N\rightarrow\infty}\frac{1}{N}\sum_{n = 1}^N\int_X\overline{g_1}R_1^hg_1}\prod_{j = 1}^{L_2}\left(S^{p_j(n+h)}(WR_1^{-1})^nW^hg_jS^{p_j(n)}(WR_1^{-1})^n\overline{g_j}\right)d\mu.
\end{alignat*}
We now observe that the transformations $\{R_2R_1^{-1},\cdots,R_{L_1}R_1^{-1},WR_1^{-1},S\}$ generate a commutative group, so we may apply the inductive hypothesis to conclude that

\begin{equation}
    \left(\prod_{i = 2}^{L_1}\left((R_iR_1^{-1})^n\overline{h_i}R_i^hh_i\right)\prod_{j = 1}^{L_2}\left(S^{p_j(n+h)}(WR_1^{-1})^nW^hg_jS^{p_j(n)}(WR_1^{-1})^n\overline{g_j}\right)\right)_{n = 1}^\infty
\end{equation}
is spectrally Lebesgue, so we may use Corollary \ref{NearlyStronglyMixingAveragesTo0} to show that the quantity in Equation \eqref{IsThisActually0?} is 0, which completes the induction.
\end{proof}

\begin{remark}\label{ZeroEntropyConjectureExplanation}
    We saw in the proof of Theorem \ref{ApplicationForT1T2Main} that the assumption of $S$ being weakly mixing is what enabled us to use the weakly mixing PET, which allowed us to show that most of the multiple correlations of the sequence appearing in Equation \eqref{ProbablyAKMixingSequence} are $0$. The fact that so many of these correlations are $0$ is what led us to Conjecture \ref{ZeroEntropyConjecture}, and of course also leads to conjecture that $T$ in Theorem \ref{ApplicationForT1T2Main} can be assumed to have zero entropy instead of just singular spectrum. We state our final theorem without proof since the proof is similar to that of Theorem \ref{ApplicationForT1T2Main} after replacing the weakly mixing PET with Theorem \ref{WeakMixingHardyPET}. Furthermore, while we state the theorem for $T$ having singular spectrum, we conjecture that it is still true for $T$ having zero entropy.
\end{remark}

\begin{theorem}
\label{ApplicationForT1T2Hardy}
Let $(X,\mathscr{B},\mu)$ be a probability space, $L_1 \in \mathbb{N}\cup\{0\}$, $L_2 \in \mathbb{N}$, and $T,R_1,\cdots,R_{L_1},S,W:X\rightarrow X$ be measure preserving transformations. Suppose $\{R_1,\cdots,R_{L_1},S,W\}$ generate a commutative group, and $S$ is weakly mixing. Let $a_1,\cdots,a_{L_2} \in \mathcal{HF}$ have polynomial growth and satisfy

\begin{alignat}{2}
    &\lim_{t\rightarrow\infty}\frac{|a_i(t)|}{\log(t)} = \infty\text{ for all }1 \le i \le L_2\text{, and}\\
    &\lim_{t\rightarrow\infty}\frac{|a_i(t+s)-a_j(t)|}{\log(t)} = \infty\text{ for all }1 \le i \le j \le L_2, s \in \mathbb{N}, (i,s) \neq (j,0).
\end{alignat}
For any $f,h_i, g_j \in L^\infty(X,\mu)$ with $1 \le i \le L_1$, $1 \le j \le L_2$, satisfying $\int_Xg_jd\mu = 0$ for some $1 \le j \le L_2$, and for which $\mu_{f,T}\perp\mathcal{L}$, we have

\begin{equation}
    \lim_{N\rightarrow\infty}\frac{1}{N}\sum_{n = 1}^NT^nf\left(\prod_{i = 1}^{L_1}R_i^nh_i\right)\left(\prod_{j = 1}^{L_2}S^{\lfloor a_i(n)\rfloor}W^ng_j\right) = 0,
\end{equation}
with convergence taking place in $L^2(X,\mu)$.
\end{theorem}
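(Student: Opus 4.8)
The plan is to follow the proof of Theorem~\ref{ApplicationForT1T2Main} line by line, replacing the single appeal to Bergelson's weakly mixing polynomial PET by an appeal to Theorem~\ref{WeakMixingHardyPET} (Tsinas' weakly mixing Hardy PET), and checking that the non-degeneracy hypotheses placed on $a_1,\dots,a_{L_2}$ survive the two operations used in that proof, namely taking van der Corput differences in a shift variable and conjugating by $R_1^{-n}$.

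Since $\mu_{f,T}\perp\mathcal{L}$, Lemma~\ref{CreatingWeaklyRigidSequences} shows $(T^nf)_{n=1}^\infty$ is spectrally singular, so by Lemma~\ref{StrongMixingTimesRigidNormAveragesTo0} it suffices to prove that
\[
  \Big(\prod_{i=1}^{L_1}R_i^nh_i\prod_{j=1}^{L_2}S^{\lfloor a_j(n)\rfloor}W^ng_j\Big)_{n=1}^\infty
\]
is spectrally Lebesgue, which I would do by induction on $L_1$. For the base case $L_1=0$, fix $h\in\mathbb{N}$ and expand $\lim_{N}\frac1N\sum_{n=1}^N\langle\prod_jS^{\lfloor a_j(n+h)\rfloor}W^{n+h}g_j,\prod_jS^{\lfloor a_j(n)\rfloor}W^ng_j\rangle$; applying the measure-preserving map $W^{-n}$ inside the integral, which commutes with $S$ since $\{S,W\}$ generates a commutative group, rewrites this correlation as
\[
  \lim_{N\to\infty}\frac1N\sum_{n=1}^N\int_X\Big(\prod_{j=1}^{L_2}S^{\lfloor a_j(n+h)\rfloor}W^hg_j\Big)\Big(\prod_{j=1}^{L_2}S^{\lfloor a_j(n)\rfloor}\overline{g_j}\Big)\,d\mu.
\]
Because our Hardy fields are translation invariant, the $2L_2$ functions $\{a_j(t+h)\}_{j}\cup\{a_j(t)\}_{j}$ again lie in $\mathcal{HF}$ and have polynomial growth, and the stated growth hypotheses on the $a_j$ are exactly what is needed for this family to satisfy the hypotheses of Theorem~\ref{WeakMixingHardyPET}; granting this, weak mixing of $S$ forces the limit above to equal $\prod_j\int_XW^hg_j\,d\mu\cdot\prod_j\int_X\overline{g_j}\,d\mu=\prod_j\big|\int_Xg_j\,d\mu\big|^2=0$, since $\int_Xg_{j_0}\,d\mu=0$ for some $j_0$. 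As the $h$-th correlation vanishes for every $h\in\mathbb{N}$---or at worst for all but finitely many $h$, which is equally harmless for \eqref{EquationForMainApplication}---Theorem~\ref{StrongMixingClassicalvdC} completes the base case.

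For the inductive step, assume the claim whenever there are at most $\ell$ transformations of ``$R$-type'' and let $L_1=\ell+1$. Fix $h\in\mathbb{N}$ and, exactly as in the derivation of \eqref{IsThisActually0?}, apply the measure-preserving $R_1^{-n}$ inside the $h$-th correlation; since $R_1$ commutes with $R_2,\dots,R_{L_1},S,W$ this turns the correlation into
\[
  \lim_{N\to\infty}\frac1N\sum_{n=1}^N\int_X\overline{h_1}R_1^hh_1\cdot\Big(\prod_{i=2}^{L_1}(R_iR_1^{-1})^n\overline{h_i}R_i^hh_i\Big)\prod_{j=1}^{L_2}\Big(S^{\lfloor a_j(n+h)\rfloor}(WR_1^{-1})^nW^hg_j\cdot S^{\lfloor a_j(n)\rfloor}(WR_1^{-1})^n\overline{g_j}\Big)\,d\mu.
\]
The transformations $\{R_2R_1^{-1},\dots,R_{L_1}R_1^{-1},WR_1^{-1},S\}$ generate a commutative group with $S$ still weakly mixing, the family $\{a_j(t+h)\}_j\cup\{a_j(t)\}_j$ again satisfies the Hardy hypotheses by the same translation-invariance check as in the base case, and $\overline{g_{j_0}}$ has zero integral, so the inductive hypothesis---now with $\ell$ transformations of ``$R$-type''---shows that the sequence obtained by deleting the fixed factor $\overline{h_1}R_1^hh_1$ is spectrally Lebesgue. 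Corollary~\ref{NearlyStronglyMixingAveragesTo0} then shows the Ces\`aro averages of that spectrally Lebesgue sequence tend to $0$ in $L^2(X,\mu)$, whence pairing against $\overline{h_1}R_1^hh_1\in L^\infty(X,\mu)\subseteq L^2(X,\mu)$ via Cauchy--Schwarz shows the displayed correlation is $0$; as this holds for all $h$, Theorem~\ref{StrongMixingClassicalvdC} again yields spectral Lebesgueness, completing the induction and hence the proof.

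The main obstacle is the Hardy-field bookkeeping hidden in the phrase ``again satisfies the Hardy hypotheses'': one must check that both operations above---replacing the family $\{a_j\}$ by $\{a_j(t+h)\}_j\cup\{a_j(t)\}_j$, and, through the nested van der Corput differences, by iterated shifts of the $a_j$---preserve polynomial growth and keep every pairwise difference of shifted members growing faster than $\log t$, so that Theorem~\ref{WeakMixingHardyPET} remains applicable at each stage. In the polynomial setting of Theorem~\ref{ApplicationForT1T2Main} the analogous point reduced to the easy observation that essential distinctness is preserved away from a finite set of shifts; here it is a routine but genuine comparison argument inside the Hardy field, which is precisely why the hypotheses on $a_1,\dots,a_{L_2}$ are formulated with the explicit shift parameter $s$ rather than only for the functions themselves.
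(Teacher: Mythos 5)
Your proposal is correct and is exactly the argument the paper intends: Remark \ref{ZeroEntropyConjectureExplanation} states the theorem without proof precisely because it follows the proof of Theorem \ref{ApplicationForT1T2Main} verbatim with the weakly mixing polynomial PET replaced by Theorem \ref{WeakMixingHardyPET}, which is what you carry out, including the reduction via Lemmas \ref{CreatingWeaklyRigidSequences} and \ref{StrongMixingTimesRigidNormAveragesTo0}, the induction on $L_1$, and the check that the shifted family $\{a_j(t+h)\}_j\cup\{a_j(t)\}_j$ still satisfies the Hardy growth hypotheses. Your closing observation that the shift-indexed hypotheses on $a_1,\dots,a_{L_2}$ are formulated exactly so that this bookkeeping goes through is also consistent with the paper's setup.
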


\begin{remark}
    A simple example of $a_1,\cdots,a_k \in \mathcal{HF}$ satisfying Theorem \ref{ApplicationForT1T2Hardy} is $a_i(t) = t^{\alpha_i}$ where $1 < \alpha_1 < \alpha_2 < \cdots < \alpha_k \in \mathbb{R}$.
\end{remark}

\begin{Backmatter}
\bibliographystyle{abbrv}
\begin{center}
	\bibliography{FOMPSA}
\end{center}
\end{Backmatter}

\end{document}